\documentclass{amsart}
\usepackage[utf8]{inputenc}
\usepackage{graphicx}
\usepackage{tikz-cd}
\usepackage{mathrsfs}
\usepackage{amsfonts}
\usepackage{amsthm}
\usepackage{thmtools}
\usepackage{amsmath}
\usepackage{amssymb}
\usepackage{mathtools}
\usepackage{extarrows}
\usepackage{interval}
\usepackage{bbm}
\usepackage[english]{babel}
\usepackage{romannum}
\usepackage{setspace}
\usepackage{listings}
\usepackage{pdfpages}
\usepackage{hyperref}
\usepackage{enumitem}
\usepackage{cleveref}
\usepackage{verbatim}

\newtheorem{theorem}{Theorem}[section]
\newtheorem{lemma}[theorem]{Lemma}
\newtheorem{corollary}[theorem]{Corollary}
\newtheorem{proposition}[theorem]{Proposition}

\newtheorem{maintheorem}{Theorem}

\theoremstyle{definition}
\newtheorem{definition}[theorem]{Definition}

\newtheorem{example}[theorem]{Example}
\numberwithin{equation}{section}
\newcommand*{\sheafhom}{\mathcal{H}\kern -.5pt om}
\newcommand*{\sheafext}{\mathcal{E}\kern -.5pt xt}

\title[coarse moduli superspaces]{Existence of coarse moduli superspaces for Deligne--mumford superstacks}
\author{Fei Peng}
\email{pengf2@student.unimelb.edu.au}
\address{School of Mathematics \& Statistics, The University of Melbourne, Parkville, VIC, 3010, Australia}
\date{\today}
\subjclass[2020]{Primary 14M30, 14D23; Secondary 14A20.}
\keywords{Coarse moduli superspaces, Deligne--Mumford superstacks, Keel--Mori theorem}

\begin{document}

\begin{abstract}
    We prove that every Deligne--Mumford superstack over $\mathbb{Z}[1/2]$ with finite inertia admits a coarse moduli superspace. This generalizes the Keel--Mori theorem to the super setting. As part of our argument, we introduce a notion of integral morphisms in algebraic supergeometry and use it to establish an effective descent result for \'etale morphisms of algebraic superspaces. We also include an example of a non-Deligne--Mumford superstack with finite inertia that does not admit a coarse moduli superspace.  
\end{abstract}

\maketitle
\pagenumbering{arabic}
\tableofcontents

\section{Introduction}

Supergeometry has received increasing attention in the past few decades, partly through the emergence of moduli problems in supersymmetry and its connections with other areas of mathematical physics. For example, the supermoduli of super Riemann surfaces (also known as supersymmetric curves) play a central role in algebraic supergeometry. They have been constructed both analytically and algebraically in the literature \cite{LR88, DHS97}. Recent work by Witten has added a modern perspective to this topic \cite{Wit19}. In particular, the theory of superstacks has proven to be highly effective for studying supermoduli spaces of stable Riemann surfaces. See \cite{Nor26} for an example. Codogni and Viviani constructed the supermoduli of super Riemann surfaces as smooth separated complex Deligne--Mumford superstacks \cite[Theorem\ B]{CV19}. More recently, Felder, Kazhdan, and Polishchuk obtained compactifications of the supermoduli of super Riemann surfaces with punctures as Deligne--Mumford superstacks \cite{FKP23} using stable super Riemann surfaces introduced by Deligne in his letter to Manin. This result was independently established by Moosavian and Zhou \cite{MZ24}.

The theory of superstacks, however, is still at an early stage of development. It first appeared in \cite{CV19}, where the authors introduced the notion of complex supergroupoids and complex Deligne--Mumford superstacks. The foundations of algebraic superstacks were established in full generality only recently in \cite{BH25}. More recently, methods of derived algebraic geometry have also been extended to the super case in \cite{Dan26}. It is natural to extend classical results of algebraic stacks to the super setting. One of the most important tools in moduli theory is the existence of moduli spaces for algebraic stacks. For Deligne--Mumford stacks, the Keel--Mori theorem \cite{KM97,Con05,Ryd13} provides a general mechanism for passing from a stack to an algebraic space that retains its topological and geometric properties. In the super setting, there is evidence of supermoduli problems admitting such moduli spaces. See \cite[Theorem\ B(2)]{CV19} for example. However, an analogous result does not seem to be available even for complex Deligne--Mumford superstacks.

The goal of this paper is to establish the existence of coarse moduli superspaces for Deligne--Mumford superstacks. More precisely, we prove the following result.

\begin{maintheorem}\label{thm:main}
    Let $\mathcal{X}$ be a Deligne--Mumford superstack over $\mathbb{Z}[1/2]$. If $\mathcal{X}$ has finite inertia, then it admits a coarse moduli superspace $\pi\colon\mathcal{X}\to X$ such that $\pi$ is a separated universal homeomorphism. Let $S$ be an algebraic superspace and let $\mathcal{X}\to S$ be a morphism of superstacks. If $\mathcal{X}\to S$ is locally of finite type, then $\pi$ is proper and quasi-finite. Consider the following properties
    \begin{enumerate}[topsep=0pt,noitemsep,label=\normalfont(\Alph*)]
        \item\label{theorem_A} quasi-compact, quasi-separated, separated, universally open, and universally closed,
        \item\label{theorem_B} locally of finite type, finite type, and proper.
    \end{enumerate}
    If $\mathcal{X}\to S$ has any of the properties in \ref{theorem_A}, so does $X\to S$. If $S$ is locally Noetherian and $\mathcal{X}\to S$ has any of the properties in \ref{theorem_B}, so does $X\to S$.
\end{maintheorem}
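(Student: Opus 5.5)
The plan is to follow the Keel--Mori strategy in the form of Conson and Rydh, and to use the super structure only in two places: bosonic reduction and invariants under finite group actions, the latter available because $2$ is invertible.

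\textbf{Step 1 (local structure).} I would work étale locally on the coarse space being built. Since $\mathcal{X}$ is Deligne--Mumford with finite inertia, for each point I would find an étale representable morphism $[U/G]\to\mathcal{X}$. Here $U$ is an affine superscheme and $G$ is a finite (abstract) group stabilizing the point. The construction is the classical one: take an étale atlas $V\to\mathcal{X}$, restrict to the quasi-finite separated groupoid $R=V\times_{\mathcal{X}}V\rightrightarrows V$, and use the structure of quasi-finite separated morphisms of superschemes (Zariski's main theorem via the underlying bosonic schemes) to split off the stabilizer. The two morphisms should be stabilizer-preserving, i.e.\ the inertia pulls back isomorphically. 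This transfers because étaleness and finiteness of a morphism of superschemes are detected on the bosonic reductions together with flatness.

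\textbf{Step 2 (quotients).} For $U=\operatorname{Spec} A$ with $A$ a supercommutative ring and $G$ finite, I would set the quotient to be $\operatorname{Spec} A^G$. Then I would check:
\begin{enumerate}
\item $A$ is integral over $A^G$, with $\prod_{g}(t-g a)$ for even $a$, and $a^2$ even for odd $a$; this is where the notion of integral morphisms introduced in the paper comes in.
\item $\operatorname{Spec} A\to\operatorname{Spec} A^G$ is a universal homeomorphism onto the orbit space.
\item Formation of invariants commutes with flat base change on $A^G$.
\end{enumerate}
The last item is immediate. I would also check that $[U/G]\to\operatorname{Spec} A^G$ is a coarse moduli superspace, that is, initial among maps to superspaces and bijective on geometric points.

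\textbf{Step 3 (gluing).} This is the main obstacle. The local quotients $\operatorname{Spec} A_i^{G_i}$ come with étale maps among themselves, because stabilizer-preserving étale maps induce étale maps of quotients; this reduces to the bosonic statement together with flatness of invariants. They must be glued to an algebraic superspace $X$. Here I would invoke the effective descent result for étale morphisms of algebraic superspaces along universal homeomorphisms/integral surjections, announced in the abstract. Concretely, let $W=\coprod [U_i/G_i]$. The coarse space of $W\times_{\mathcal{X}}W$ is étale over both copies of $\coprod \operatorname{Spec} A_i^{G_i}$, which gives an étale equivalence relation. Its quotient is the algebraic superspace $X$, and $\pi$ descends. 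I expect the hard part to be verifying that this is an equivalence relation, especially that it is a monomorphism. That check reduces to the bosonic Keel--Mori argument plus the fact that invariants commute with the étale base changes in question. Separatedness and universal homeomorphism of $\pi$ are étale local on $X$, so they follow from Step 2.

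\textbf{Step 4 (properties).} If $\mathcal{X}\to S$ is locally of finite type, I would reduce to the affine case and use a super Artin--Tate argument. Because $A$ is integral over $A^G$, generated by finitely many even and odd elements, $A^G$ is of finite type over the base once that base is Noetherian, and $A$ is finite over $A^G$. This gives that $\pi$ is proper and quasi-finite. The properties in \ref{theorem_A} are topological or diagonal conditions, and they transfer through the universal homeomorphism $\pi$. For separatedness I would use that $\mathcal{X}\to X\times_S X$ factors through the diagonal, and that $\pi\times\pi$ is universally closed and surjective. The properties in \ref{theorem_B} then follow over a Noetherian $S$: finite type comes from the Artin--Tate step, and properness combines finite type with separated and universally closed.
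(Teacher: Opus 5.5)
\textbf{The gap is in Step 1.} Your Steps 2--4 are essentially sound. For an abstract finite group, the norm polynomial $\prod_{g\in G}(t-g\cdot a)$ gives integrality; this is the easy case the paper mentions after \Cref{thm:superspace_finite_flat_covering}. Your gluing in Step 3 is the content of \Cref{thm:super_Kollar_Rydh} and \Cref{thm:superstacky_Kollar_Rydh}. Step 1, however, is what everything rests on, and it is the only place finite inertia enters. It has three problems.

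\emph{The construction does not give group charts.} Applying Zariski's main theorem to $s\colon R\to V$ only isolates, \'etale locally at $v$, an open and closed part of $R$ that is finite over $V$. \'Etale localization on $V$ replaces $R$ by $V'\times_{\mathcal{X}}V'$, so turning that part into a subgroupoid is itself the nontrivial Keel--Mori shrinking step. Even then you obtain a finite \'etale \emph{groupoid}, not an action of the abstract group $G_x$ on an affine superscheme. Getting a group action needs strict henselization and spreading out, which you have not set up for superschemes.

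\emph{The transfer principle is false.} A finite flat morphism of superschemes whose bosonic reduction is \'etale need not be \'etale. For example, $\mathbb{G}_a^{0|1}=\operatorname{sSpec}(k[\theta])\to\operatorname{Spec}(k)$ is finite flat with identity bosonic reduction but is not \'etale. Your criterion would declare the atlas $\operatorname{Spec}(k)\to B\mathbb{G}_a^{0|1}$ \'etale, although that stack has an odd stabilizer and is not Deligne--Mumford.

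\emph{Finite inertia is never used.} You do not show that the stabilizer-preserving locus of a chart is open, which is exactly where finiteness of $\mathcal{I}_{\mathcal{X}}\to\mathcal{X}$ is needed. You also do not show that the shrunken charts still cover $\mathcal{X}$.

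\textbf{How the paper does it.} The paper avoids group charts (\Cref{lem:Nisnevich_covering_super}, \Cref{prop:stabilizer_preserving}).
\begin{itemize}
\item It takes a separated \'etale atlas $U\to\mathcal{X}$ with $U$ a disjoint union of affines.
\item It forms the super Hilbert space of \'etale points $\mathcal{SH}\textit{ilb}_{U/\mathcal{X}}$. Representability reduces to Rydh's ordinary case via $\mathcal{SH}\textit{ilb}_{X/S}\cong\mathcal{H}\textit{ilb}_{X_{\operatorname{ev}}/S_{\operatorname{ev}}}\times_{S_{\operatorname{ev}}}S$ (\Cref{prop:SHilb_quasi_finite}).
\item The universal family $V\to\mathcal{W}$ over the positive-rank part is a finite \'etale presentation by an AF superscheme.
\item The stabilizer-preserving locus is open because $\mathcal{I}_{\mathcal{X}}\times_{\mathcal{X}}\mathcal{W}\to\mathcal{W}$ is finite and $\mathcal{I}_{\mathcal{W}}$ is open and closed in it.
\item Every geometric point of $\mathcal{X}$ lifts to that locus via a $G_x$-orbit in the fibre $U_x$.
\end{itemize}
Quotients are then taken by finite \'etale supergroupoids, with integrality from \Cref{lem:invariant_superring} rather than from norms.

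\textbf{Two ways to repair your route while keeping finite groups.}
\begin{itemize}
\item Use topological invariance of the \'etale site (the stacky form of \Cref{prop:super_une_equivalence}). It lifts classical \'etale charts $[U_0/G_x]\to\mathcal{X}_{\operatorname{bos}}$ uniquely to \'etale charts $[U/G_x]\to\mathcal{X}$ with the same reduced stabilizers. Openness of the stabilizer-preserving locus still has to come from finite inertia.
\item Observe that for separated \'etale $U\to\mathcal{X}$ one has $\mathcal{SH}\textit{ilb}^{d|0}_{U/\mathcal{X}}\cong[U^{(d)}/S_d]$, where $U^{(d)}$ is the complement of the open and closed diagonals in the $d$-fold fibre power of $U$ over $\mathcal{X}$. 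This gives charts with $G=S_d$ on an AF superscheme, to which your Step 2 applies on $S_d$-invariant affine opens.
\end{itemize}
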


In particular, \Cref{thm:main} implies that any separated Deligne--Mumford superstack over $\mathbb{C}$ admits a coarse moduli superspace, confirming an expectation of Codogni and Viviani \cite[Page\ 370]{CV19}.  For example, it applies to the moduli superstack of super Riemann surfaces $\mathcal{SM}_g$ and its compactification $\overline{\mathcal{SM}}_g$. In particular, \Cref{thm:main} is used in forthcoming work with Ott and Polishchuk to show that $\overline{\mathcal{SM}}_g$ admits a projective coarse moduli superspace \cite{OPP26}. It also applies to many moduli superstacks of recent interest, such as the moduli superstack of branched covers by \cite[Theorem\ 21]{DO23} and the moduli superstack of stable supermaps by \cite{BH25b}. 

Our method for \Cref{thm:main} is inspired by the strategy of Rydh for \cite[Theorem\ 6.13]{Ryd13}. We proceed in the following steps.
\begin{enumerate}
    \item Produce a separated, \'etale, representable, stabilizer-preserving surjective morphism $\mathcal{W}\to\mathcal{X}$ where $\mathcal{W}$ admits a finite and \'etale presentation from an AF superscheme $V$. This is essentially \Cref{prop:stabilizer_preserving}, and it is the only step where the assumption on the inertia superstack is used.
    \item Show that $\mathcal{W}$ admits a coarse moduli superspace $\mathcal{W}\to W$ such that the composition $V\to\mathcal{W}\to W$ is integral. We will do so in \Cref{thm:superstacky_AF_mod_finite}.
    \item Show that $\mathcal{Q}=\mathcal{W}\times_{\mathcal{X}}\mathcal{W}$ admits coarse moduli superspaces $\mathcal{Q}\to Q$ such that the two projections $\mathcal{Q}\rightrightarrows\mathcal{W}$ descend to an \'etale equivalence relation $Q\rightrightarrows W$, and conclude that $\mathcal{X}$ admits a coarse moduli superspace $\mathcal{X}\to X$, where $X$ is the quotient of the \'etale equivalence relation $Q\rightrightarrows W$. This is explained in \Cref{thm:superstacky_Kollar_Rydh}. 
\end{enumerate}
The main difficulty is that several ingredients used in the ordinary theory are not currently available in the supergeometric setting. To overcome this, we introduce a theory of integral extensions of superrings and integral morphisms for superschemes, and we establish various descent results for étale morphisms in the super case. These results appear to be new in algebraic supergeometry and may be of independent interest.

In the ordinary case, the Keel--Mori theorem implies that every algebraic stack with finite inertia admits a coarse moduli space. However, this is false in the super case. See \Cref{ex:counter_Keel_Mori} for a counterexample. In particular, this counterexample reveals an obstruction arising from odd stabilizers.

Unless otherwise stated, we work over $\mathbb{Z}[1/2]$. The article is organized as follows. In \Cref{sec:basics}, we recall some basic facts in commutative superalgebra and supergeometry, including the definition of supergroupoids and superstacks. We also introduce various notions of quotients by supergroupoids. In \Cref{sec:descent}, we introduce integral extensions of superrings and integral morphisms of superschemes. We then show that integral surjective morphisms of superschemes are of effective descent for separated and \'etale morphisms, extending results in \cite{Ryd10} to the super setting. We also show that integral strongly geometric quotients satisfy the descent condition for separated and \'etale morphisms. In \Cref{sec:keel_mori}, we prove \Cref{thm:main}. We first show that the quotient stack of an AF superscheme by a finite and \'etale supergroupoid admits a coarse moduli superspace and then perform a reduction step by constructing a separated, \'etale, representable, surjective, and stabilizer-preserving morphism. In the two appendices, we explain the construction of bosonic quotients and representability of the Hilbert space of points in the super case.

\subsection*{Acknowledgement}

This project is partially supported by the Australian Research Council DP210103397 and FT210100405, a Melbourne Research Scholarship, and the Science Abroad Travelling Scholarship offered by the University of Melbourne. I would like to thank my advisor, Jack Hall, for his guidance, support, and encouragement throughout this project. It is my pleasure to thank Nadia Ott and Alexander Polishchuk for helpful discussions and comments on an earlier version of this manuscript. I would also like to thank Marcel Dang for many enlightening conversations on supergeometry. I am also grateful to Paul Norbury for discussions and advice on supergeometry. It is also my pleasure to thank Tianqi Feng, Oliver Li, and Adam Monteleone for their helpful comments, suggestions, and encouragement. 

\subsection*{AI disclosure}

This work was initiated in 2025 as part of the author's PhD thesis. The main ideas were developed by the author. Generative AI tools were subsequently used in the preparation of this article to help with proofreading and writing improvements. The author takes full responsibility for the correctness of all results in this paper. 

\section{Basics on algebraic supergeometry}\label{sec:basics}

In this section, we briefly recall some basic concepts in commutative superalgebra and algebraic supergeometry for the reader's benefit. For the details on commutative superalgebra, we recommend Westra's thesis \cite{Westra:2009}. For the foundations of algebraic supergeometry and superstacks, we generally follow \cite{BHP23} and \cite{CV19}. A reader familiar with the language of algebraic supergeometry can safely skip to the next section. Throughout this section, we work over $\mathbb{Z}[1/2]$.

\subsection{Commutative superalgebra}

 Let $R$ be a $\mathbb{Z}_{2}$-graded ring such that $R=R_{0}\oplus R_{1}$. We say an element $r\in R$ is odd (resp. even) if $r$ is contained in $R_{1}$ (resp. $R_{0}$). An element in $R$ is homogeneous if it is either odd or even. This induces a map from the set of homogeneous elements in $R$ to $\mathbb{Z}_2$, which we denote by $|-|$.

\begin{definition}\label{def:superrings}
    A \textit{superring} $A$ is a $\mathbb{Z}_{2}$-graded unital ring $A=A_{0}\oplus A_{1}$ such that $A_{i}A_{j}\subseteq A_{i+j}$ for all $i,j\in\{0,1\}$. Morphisms of superrings are simply ring homomorphisms that preserve $\mathbb{Z}_{2}$-gradings. A superring $A$ is \textit{supercommutative} if for every pair of homogeneous elements $a,b\in A$,$$ab=(-1)^{|a||b|}ba.$$
\end{definition}

It follows from the definition that if $A$ is a superring, then the identity element in $A$ is always even. If $A$ is supercommutative, then $a^{2}=0$ if $a$ is odd.  We write $\textit{sRings}$ for the category of superrings. Every ordinary ring is a superring whose odd part is trivial. This gives us a fully faithful functor $\textit{Rings}\to\textit{sRings}$.

An \textit{ideal} $I$ of a superring $A$ is a $\mathbb{Z}_2$-graded ideal of $A$ such that $ax\in I$ and $xa\in I$ for every $a\in A$ and $x\in I$. An ideal $I$ is \textit{finitely generated} if it is generated by finitely many homogeneous elements. Note that every superring contains a canonical ideal $J_{A}$ generated by all the odd elements. The \textit{odd dimension} of a superring $A$ is the smallest number of generators of the ideal $J_{A}$. The quotient ring $A/J_{A}$ is called the \textit{bosonic reduction} of $A$, which is an ordinary ring. This gives us a left adjoint to the embedding from $\textit{Rings}$ to \textit{sRings}. We say a proper ideal $\mathfrak{p}\subseteq A$ is \textit{prime} if $ab\in\mathfrak{p}$ then either $a\in\mathfrak{p}$ or $b\in\mathfrak{p}$ for every homogeneous element $a,b\in A$. An ideal $\mathfrak{m}\subseteq A$ is maximal if it is maximal among proper ideals of $A$. A \textit{local superring} is a superring with only one maximal ideal. Note that every prime ideal of $A$ contains $J_{A}$ because every odd element is nilpotent.

Throughout this article, we will assume that all superrings are supercommutative. In particular, we do not require superrings to have finite odd dimensions. However, every finite set of odd elements is contained in a nilpotent ideal generated by finitely many odd elements.

\begin{definition}\label{def:supermodules}
    Let $A$ be a superring. A left (resp. right) $A$-\textit{module} is a $\mathbb{Z}_{2}$-graded abelian group $M=M_{0}\oplus M_{1}$ with a left (resp. right) $A$-action, that is, a morphism $l\colon A\times M\to M$ (resp. $l\colon M\times A\to M$) such that $A_{i}\times M_{j}\subseteq M_{i+j}$ for all $i,j\in\{0,1\}$. A left (resp. right) $A$-module homomorphism is a morphism of $\mathbb{Z}_{2}$-graded abelian groups that preserves the left (resp. right) $A$-actions.
\end{definition}

Since $A$ is supercommutative, every left $A$-module admits a canonical right $A$-module structure. One checks that these two actions are compatible with each other. By an $A$-module, we mean a left $A$-module together with the canonical right $A$-action. Therefore, we will not distinguish left and right $A$-modules and simply refer to them as $A$-modules. Any ideal $I$ of $A$ is a submodule of $A$. An $A$-module is finitely generated if it can be generated as an ordinary $A$-module by finitely many homogeneous elements. The usual notion of tensor products readily extends to the super case. An \textit{$A$-superalgebra} is a superring $B$ equipped with a morphism of superrings $A\to B$. It is finitely generated if it is generated as an $A$-superalgebra by finitely many homogeneous elements. A superring is \textit{Noetherian} if it satisfies the ascending chain condition on its ideals. The usual properties of Noetherian rings extend to the super case. For instance, every finitely generated algebra over a Noetherian superring is Noetherian. Let $f\colon A\to B$ be a morphism of superrings. We say $f$ is of \textit{finite type} if $B$ is finitely generated as an $A$-superalgebra. We say $f$ is \textit{finite} if $B$ is finitely generated as an $A$-module. This is consistent with the usual definition of finite type and finite morphisms of rings. One of the only properties of superrings that is not discussed in Westra's thesis but is essential for us is integrality. In \cite{RTT23}, a notion of integral extensions of superrings was introduced. However, it is not well-behaved in our setting. We will introduce a slightly different notion of integral extensions in \Cref{sec:integral}.

\subsection{Superschemes}
We will now recall some basic definitions in algebraic supergeometry. For more details on the foundation, we refer to \cite{BHP23}. Recall that a locally ringed space is a topological space with a sheaf of rings whose stalk at each point is a local ring. We define locally superringed spaces in a similar fashion.

\begin{definition}\label{def:superspaces}
    Let $X$ be a topological space and $\mathcal{O}_{X}=\mathcal{O}_{X,0}\oplus\mathcal{O}_{X,1}$ be a sheaf of superrings on $X$. We say $(X, \mathcal{O}_{X})$ is a \textit{locally superringed space} if for every point $x\in X$, the stalk $\mathcal{O}_{X,x}$ is a local superring. 
\end{definition}

A morphism of locally superringed spaces is just a pair $(f,f^{\#})\colon(X,\mathcal{O}_{X})\to(Y,\mathcal{O}_{Y})$ where $f\colon X\to Y$ is a continuous map and $f^{\#}\colon\mathcal{O}_{Y}\to f_{*}\mathcal{O}_{X}$ is a morphism of sheaves of superrings such that for every point $x\in X$, the induced map $\mathcal{O}_{Y,f(x)}\to\mathcal{O}_{X,x}$ is a local homomorphism of local superrings. It is clear from the definition that the category of locally ringed spaces embeds into the category of locally superringed spaces.

Let $A$ be a superring. We define the superspectrum $\operatorname{sSpec}(A)$ of $A$ to be the set of prime ideals of $A$. Similar to the ordinary case, we could equip $\operatorname{sSpec}(A)$ with the Zariski topology. This gives us a locally superringed space $\operatorname{sSpec}(A)$. See \cite[Section\ 5.4.1]{Westra:2009} for the details. We say a locally superringed space $X$ is an \textit{affine superscheme} if it is isomorphic to $\operatorname{sSpec}(A)$ for some superring $A$ in the category of locally superringed spaces.

\begin{definition}\label{def:superschemes}
    A \textit{superscheme} is a locally superringed space that is locally isomorphic to an affine superscheme.
\end{definition}

Similar to the ordinary case, a superscheme is \textit{locally Noetherian} if it is locally isomorphic to the spectrum of a Noetherian superring. A superscheme is \textit{Noetherian} if it is quasicompact and locally Noetherian. It is clear from the definition that the bosonic reduction of a superscheme is an ordinary scheme. Most properties of schemes and morphisms of schemes carry over naturally to the super case. See \cite[Appendix\ A]{BHP23} for a list of properties of superschemes carried over from the ordinary case. One of the few exceptions is projectivity, which becomes rather complicated in the super case. See \cite[Section\ 2]{BHP23} for the details. Alternatively, we could define a superscheme as a Zariski sheaf on the category of affine superschemes that admits a cover by representable affine open subsheaves. One can readily check that this is equivalent to \Cref{def:superschemes} from the functor-of-points perspective.

Let $(X,\mathcal{O}_{X})$ be a superscheme. We see that there is a sheaf of ideals $\mathcal{J}=(\mathcal{O}_{X,1})^2\oplus\mathcal{O}_{X,1}$ generated by the odd elements. Observe that the quotient $\mathcal{O}_{X}/\mathcal{J}$ is a sheaf of ordinary rings and $(X,\mathcal{O}_{X}/\mathcal{J})$ is an ordinary scheme. We say $(X,\mathcal{O}_{X}/\mathcal{J})$ is the \textit{bosonic reduction} of $(X,\mathcal{O}_{X})$ denoted by $X_{\text{bos}}$. Note that the quotient map $\mathcal{O}_{X}\to\mathcal{O}_{X}/\mathcal{J}$ induces a closed immersion $X_{\text{bos}}\to X$. This defines a functor from the category of superschemes to the category of schemes. One readily checks that this functor is the right adjoint of the natural embedding from the category of schemes to that of superschemes.

On the other hand, there is a $\Gamma=\{\pm 1\}$-action of $\mathcal{O}_{X}$ given by the parity involution, that is, $(-1)\cdot f=f_{0}-f_{1}$ for every element $f\in\mathcal{O}_{X}$. By definition, the superring of invariants $\mathcal{O}_{X,0}=(\mathcal{O}_{X})^\Gamma$ is another sheaf of ordinary rings. This produces another ordinary scheme $(X, \mathcal{O}_{X,0})$ from $(X, \mathcal{O}_{X})$ called the $\textit{bosonic quotient}$ of $X$ denoted by $X_{\text{ev}}$. One checks that taking the bosonic quotient is the left adjoint to the natural embedding. In general, bosonic reductions are very well-behaved. For example, they commute with fibre products. This fails for bosonic quotients. Let $X=\operatorname{sSpec}(\mathbb{C}[\theta])$ where $\theta$ is an odd variable. Then $X_{ev}$ is just $\operatorname{Spec}(\mathbb{C})$. However, the bosonic quotient of $X\times_{\mathbb{C}}X=\operatorname{sSpec}(\mathbb{C}[\theta_1,\theta_2])$ is isomorphic to the dual numbers $\operatorname{Spec}(\mathbb{C}[x]/(x^2))$. We also see that the bosonic quotient of a smooth superscheme need not be smooth.

\begin{definition}\label{def:superdim}
    Let $X$ be a superscheme. The even dimension of $X$ is the dimension of its bosonic reduction $X_{\text{bos}}$. The odd dimension of $X$ is the supremum of the odd dimensions of the local superrings $\mathcal{O}_{X,x}$ at every point $x$ of $X$. We say $X$ has dimension $(m|n)$ if it has even dimension $m$ and odd dimension $n$.
\end{definition}
Note that both the odd and even dimensions can be infinite for a superscheme.

\subsection{Superstacks and supergroupoids}\label{sec:stacks_groupoids}

In this subsection, we recall some terminology on superstacks and supergroupoids. For the foundations of superstacks, we generally follow the conventions and terminology in \cite{BH25}. For the definition of categories fibred in groupoids and superstacks, we refer to \cite[Section\ 3]{CV19}. Let $S$ be a superscheme. Let $\text{sSch}_{/S}$ be the category of superschemes over $S$. 

\begin{definition}\label{def:alg_superspaces}
    An \textit{algebraic superspace} $X$ over $S$ is a sheaf on $\text{sSch}_{/S}$ for the \'etale topology such that
    \begin{enumerate}[topsep=0pt,noitemsep,label=\normalfont(\arabic*)]
        \item the diagonal morphism $\Delta_{X}\colon X\to X\times_{S} X$ is representable by a superscheme over $S$, and
        \item there exists a \'etale surjective morphism $U\to X$, where $U$ is a superscheme over $S$.
    \end{enumerate}
\end{definition}

Observe that the diagonal map $\Delta_{X}$ is a monomorphism. Let $U\to X$ be an \'etale covering. Then we have the following Cartesian diagram.

\begin{equation}
    \begin{tikzcd}
        R\arrow[r,hookrightarrow]\arrow[d] & U\times_{S}U\arrow[d]\\
        X\arrow[r,hookrightarrow,"\Delta_{X}"] & X\times_{S} X.
    \end{tikzcd}
\end{equation}
Note that the vertical maps are \'etale by definition. This tells us that $R$ defines an \'etale equivalence relation on $U$ and $X$ is isomorphic to the quotient sheaf $U/R$ as \'etale sheaves.

\begin{definition}\label{def:alg_stacks}
    Let $S$ be an algebraic superspace. An \textit{algebraic superstack} $\mathcal{X}$ over $S$ is a stack over $\text{sSch}_{/S}$ for the \'etale topology, in the sense of \cite[Definition\ 3.3]{CV19}, such that
    \begin{enumerate}[topsep=0pt,noitemsep,label=\normalfont(\arabic*)]
        \item the diagonal morphism $\Delta_{\mathcal{X}}\colon\mathcal{X}\to \mathcal{X}\times_{S}\mathcal{X}$ is representable by an algebraic superspace over $S$, and
        \item there exists a smooth surjective representable morphism $U\to\mathcal{X}$, where $U$ is a superscheme over $S$.
    \end{enumerate}
    An algebraic superstack over $S$ is \textit{Deligne--Mumford} if it admits an \'etale surjective morphism from a superscheme over $S$.
\end{definition}
Note that the diagonal map $\Delta_{\mathcal{X}}$ is no longer a monomorphism when $\mathcal{X}$ is an algebraic superstack. However, we still have the following Cartesian diagram
\begin{equation}
    \begin{tikzcd}
        R\arrow[r]\arrow[d] & U\times_{S}U\arrow[d]\\
        \mathcal{X}\arrow[r,"\Delta_{\mathcal{X}}"] & \mathcal{X}\times_{S}\mathcal{X}.
    \end{tikzcd}
\end{equation}
In this case, one checks that $R\rightrightarrows U$ is a pre-equivalence relation in algebraic superspaces over $S$. Note that the vertical map $R\to\mathcal{X}$ is smooth and hence locally of finite presentation. For every algebraic superstack $\mathcal{X}$, the diagonal map $\Delta_{\mathcal{X}}$ is locally of finite type \cite[Proposition 3.68(1)]{BH25}. Every algebraic superstack $\mathcal{X}$ admits a bosonic reduction $\mathcal{X}_{\operatorname{bos}}$, which is an ordinary algebraic stack. Bosonic quotients exist for complex Deligne--Mumford superstacks \cite[Theorem\ A]{CV19}. For algebraic superstacks, however, the existence of bosonic quotients is a much subtler problem. See \cite{CV19} for more details. We say $\mathcal{X}$ is \textit{projected} if the canonical closed immersion $\mathcal{X}_{\operatorname{bos}}\to\mathcal{X}$ admits a retraction. Similar to the ordinary case, we have the following characterization of Deligne-Mumford superstacks and algebraic superspaces in terms of their diagonal.

\begin{proposition}[{cf.\ \cite[Proposition\ 3.68\ and\ 3.70]{BH25}}]\label{prop:DM_diagonal}
    An algebraic superstack is Deligne--Mumford if and only if its diagonal is unramified. An algebraic superstack is an algebraic superspace if and only if its diagonal is a monomorphism.
\end{proposition}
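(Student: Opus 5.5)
The plan is to follow the classical argument (as in the Stacks Project or Laumon--Moret-Bailly), reducing both statements to facts about morphisms of superschemes by means of a smooth (resp.\ \'etale) atlas. Let $\mathcal{X}$ be an algebraic superstack over $S$. Since $\Delta_{\mathcal{X}}$ is representable by algebraic superspaces and locally of finite type, its properties can be tested after base change along $U\times_S U\to\mathcal{X}\times_S\mathcal{X}$ for a smooth atlas $U\to\mathcal{X}$. Writing $R=U\times_{\mathcal{X}}U$, this reduces questions about $\Delta_{\mathcal{X}}$ to questions about $j\colon R\to U\times_S U$. Throughout I will use that unramifiedness, \'etaleness and smoothness of superschemes can be read off from the sheaf of relative differentials $\Omega$ together with flatness, as recorded in \cite[Appendix\ A]{BHP23}.

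For the Deligne--Mumford statement, suppose first that $V\to\mathcal{X}$ is \'etale surjective. Then $V\times_{\mathcal{X}}V\to V\times_S V$ is a base change of $\Delta_{\mathcal{X}}$ along a smooth surjection. It factors as a closed-and-open type map into $V\times_S V$, and both projections $V\times_{\mathcal{X}}V\to V$ are \'etale, so its differentials vanish and it is unramified. Unramifiedness descends along smooth covers, hence $\Delta_{\mathcal{X}}$ is unramified.

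The converse is the main obstacle. Assume $\Delta_{\mathcal{X}}$ is unramified, and take a smooth atlas $U\to\mathcal{X}$ of relative dimension $(m|n)$ at some point. I would produce an \'etale slice as follows.
\begin{enumerate}[topsep=0pt,noitemsep,label=\normalfont(\roman*)]
\item Unramifiedness of $\Delta$ implies that $\Omega_{U/\mathcal{X}}$ is locally free of rank $(m|n)$, and that the relative tangent sheaf of $U\to\mathcal{X}$ injects into $T_{U/S}$.
\item Locally around a point $u$, choose even functions $f_1,\dots,f_m$ and odd functions $\theta_1,\dots,\theta_n$ whose differentials form a basis of $\Omega_{U/\mathcal{X}}\otimes\kappa(u)$.
\item Let $Z\subseteq U$ be the closed subsuperscheme cut out by the $f_i-f_i(u)$ and all $\theta_j$. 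Using the superversion of the fibrewise criterion of flatness, show that $Z\to\mathcal{X}$ is \'etale at $u$.
\end{enumerate}
The genuinely super points are two. First, one must check that the odd coordinates can be killed without destroying flatness; here the odd Jacobian criterion in \cite{BHP23} is what I would rely on. Second, one must handle residue fields: these are ordinary fields, so the infinite-field-free argument of \cite{BH25}, namely passing to an \'etale neighbourhood to get rational points, should transfer verbatim to the bosonic reduction. Taking the union of such $Z$ over all points then gives an \'etale surjection from a superscheme.

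For the superspace statement, if $\mathcal{X}$ is an algebraic superspace then its diagonal is a monomorphism by definition. Conversely, if $\Delta_{\mathcal{X}}$ is a monomorphism, then every object has trivial automorphisms, so $\mathcal{X}$ is equivalent to an \'etale sheaf. A monomorphism locally of finite type is unramified, so by the first part $\mathcal{X}$ is Deligne--Mumford. Choose an \'etale atlas $V\to\mathcal{X}$. Then $R=V\times_{\mathcal{X}}V\to V\times_S V$ is a monomorphism from an algebraic superspace with \'etale projections. It remains to show that $\Delta_{\mathcal{X}}$ is representable by superschemes rather than merely superspaces. For this I would use that a separated-locally quasi-finite monomorphism is an immersion-type map, combined with the standard bootstrap, reducing via the bosonic reduction to the classical result for $\mathcal{X}_{\operatorname{bos}}$ and nilpotent thickening arguments. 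This then verifies \Cref{def:alg_superspaces}.
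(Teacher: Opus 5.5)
The paper gives no argument for this statement; it quotes \cite[Propositions~3.68 and~3.70]{BH25}. Your classical slicing-plus-bootstrap outline therefore has to stand on its own. In the direction ``unramified diagonal $\Rightarrow$ Deligne--Mumford'' it has a genuine gap at steps (ii)--(iii).

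\textbf{The gap.} A slice through $u$ needs even $f_i\in\mathfrak m_u$ whose differentials span the even part of $\Omega_{U/\mathcal X}\otimes\kappa(u)$; the expression $f_i-f_i(u)$ only makes sense at rational points. In other words, $\mathfrak m_u/\mathfrak m_u^2\to\Omega_{U/\mathcal X}\otimes\kappa(u)$ must be surjective, and its cokernel is $\Omega_{\kappa(u)/\mathcal X}$. This fails at many points, even for schemes, and \'etale neighbourhoods do not repair it.

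For example, take $\mathcal X=\operatorname{Spec}\mathbb F_3(t)$, $U=\operatorname{Spec}\mathbb F_3(t)[y]$ and $u=V(y^3-t)$. Every $f\in\mathfrak m_u$ has $df\in\mathfrak m_u\Omega_{U/\mathcal X}$. No locally closed subscheme through $u$ is \'etale over $\mathcal X$, because $\kappa(u)=\mathbb F_3(t^{1/3})$ is purely inseparable over $\mathbb F_3(t)$, and this persists after any \'etale base change.

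So you cannot ``take the union of such $Z$ over all points''. You must slice only at points with $\Omega_{\kappa(u)/\mathcal X}=0$. You then have to prove that every point of $|\mathcal X|$ has such a point above it, or above one of its specializations, since images of \'etale maps are open. That existence statement is exactly where the unramified diagonal (unramified, hence \'etale, stabilizers) must enter. Compare $B\mathbb G_m$, where the unique point of $U=\operatorname{Spec}k$ is not good.

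Your step (i) does not supply this. Local freeness of $\Omega_{U/\mathcal X}$ is just smoothness. Unramifiedness of $\Delta$ gives $e^*\Omega_{R/U\times_SU}=0$, i.e.\ surjectivity of $\Omega_{U/S}\to\Omega_{U/\mathcal X}$; that is the correct form of your ``tangent sheaf injects''. But it only makes $\Omega_{\kappa(u)/\mathcal X}$ a quotient of $\Omega_{\kappa(u)/S}$, not zero.

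\textbf{Other corrections.}
\begin{enumerate}
\item In the easy direction, $V\times_{\mathcal X}V\to V\times_SV$ is not ``closed-and-open type''; take $\mathcal X=BG$ with $G$ finite \'etale and $V=\operatorname{Spec}k$. You do not need this: the map is unramified because its composite with a projection is \'etale.
\item In the superspace part, ``a separated locally quasi-finite monomorphism is an immersion-type map'' is false. The map $(\mathbb A^1\smallsetminus\{0\})\sqcup\{0\}\to\mathbb A^1$ is a bijective monomorphism of finite type that is not an immersion. What the bootstrap actually uses is Zariski's Main Theorem in the form ``an algebraic space separated and locally quasi-finite over a scheme is a scheme''.
\item To transfer that statement, take a superscheme $T\to\mathcal X\times_S\mathcal X$ and put $P=\mathcal X\times_{\mathcal X\times_S\mathcal X}T$.
  \begin{itemize}
  \item $P_{\operatorname{bos}}\to T_{\operatorname{bos}}$ is a monomorphism locally of finite type, hence separated and locally quasi-finite, so $P_{\operatorname{bos}}$ is a scheme.
  \item $P_{\operatorname{bos}}\hookrightarrow P_{\operatorname{ev}}$ is a thickening, cut out by the locally nilpotent ideal $\mathcal O_{P,1}^2$, so $P_{\operatorname{ev}}$ is a scheme.
  \item $P\to P_{\operatorname{ev}}$ is affine by \Cref{cor:ev_map_integral}, so $P$ is a superscheme.
  \end{itemize}
\end{enumerate}

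\textbf{The odd directions cost nothing.} Odd functions vanish at every point, and $(\Omega_{U/\mathcal X}\otimes\kappa(u))_1$ is spanned by the $d\theta_j$. Choose a smooth atlas $V\to\mathcal X$. Once good $f_i,\theta_j$ exist, $(f,\theta)\colon U\times_{\mathcal X}V\to\mathbb A^{m|n}_V$ is \'etale at the points over $u$. Hence $Z\times_{\mathcal X}V$, the preimage of the zero section, is \'etale over $V$ there, with no flatness criterion needed. The real work is the even residue-field step above.
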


Let $x$ be a point of $\mathcal{X}$ and let $x\colon\operatorname{Spec}k\to\mathcal{X}$ be a representative. Consider the following Cartesian diagrams
\begin{equation}
    \begin{tikzcd}
        G_{x}\arrow[r]\arrow[d] & \operatorname{Spec}k\arrow[d]\\
        \mathcal{X}\arrow[r,"\Delta"] & \mathcal{X}\times_{S}\mathcal{X},
    \end{tikzcd}
    \begin{tikzcd}
        \mathcal{I}_{\mathcal{X}}\arrow[d]\arrow[r] & \mathcal{X}\arrow[d,"\Delta"]\\
        \mathcal{X}\arrow[r,"\Delta"] & \mathcal{X}\times_{S}\mathcal{X}.
    \end{tikzcd}
\end{equation}
We say $G_{x}$ is the stabilizer of $x$, which is a group algebraic superspace by definition. We call $\mathcal{I}_{\mathcal{X}}$ the inertia superstack of $\mathcal{X}$. If $P$ is a property of morphisms of superschemes that is stable under base change and local on the target, then we say $\mathcal{X}$ \textit{has $P$ inertia} if the morphism $\mathcal{I}_{\mathcal{X}}\to\mathcal{X}$, which is representable, is $P$. For example, we say $\mathcal{X}$ has finite inertia if $\mathcal{I}_{\mathcal{X}}\to\mathcal{X}$ is finite.

We now recall the notion of supergroupoids. This was first introduced in \cite{CV19} over the complex numbers.

\begin{definition}\label{def:supergroupoids}
    Let $S$ be an algebraic superspace. A supergroupoid over $S$ is a quintuple $(U,R,s,t,c)$ where $U$ and $R$ are algebraic superspaces over $S$ and $s,t\colon R\to U$ and $c\colon R\times_{s,U,t}R\to R$ are morphisms of algebraic superspaces over $S$ such that for every superscheme $T$ over $S$, the quintuple $(U(T),R(T),s,t,c)$ is a groupoid in the sense of \cite[\href{https://stacks.math.columbia.edu/tag/043V}{Tag 043V}]{stacks-project}. A morphism of supergroupoids over $S$ is a pair $(u,r)\colon(U,R,s,t,c)\to(U^\prime,R^\prime,s^\prime,t^\prime,c^\prime)$ where $u\colon U\to U^\prime$ and $r\colon R\to R^\prime$ are morphisms of algebraic superspaces over $S$ that induce a functor from $(U(T),R(T),s,t,c)$ to $(U^\prime(T),R^\prime(T),s^\prime,t^\prime,c^\prime)$ for every superscheme $T$ over $S$. Let $P$ be a property of morphisms of algebraic superspaces (e.g.\ flat, smooth, \'etale, separated, finite, etc). We say a supergroupoid $(U,R,s,t,c)$ is $P$ if $s$ and $t$ are $P$ over $S$.
\end{definition}

We refer to the morphism $c$ in \Cref{def:supergroupoids} as the composition map of the supergroupoid. For simplicity, we will write $s,t\colon R\rightrightarrows U$ for a supergroupoid. Let $j=(s,t)\colon R\to U\times_{S}U$. Let $u\colon T\to U$ be a $T$-valued point of $U$ where $T$ is a superscheme over $S$. Then the \textit{stabilizer} of $u$, denoted by $\operatorname{stab}(u)$, is defined to be the pullback of the induced map $T\to U\times_{S}U$ along $j$. Let $f=(f_R,f_U)\colon (R_U, U)\to(R_W,W)$ be a morphism of supergroupoids. For a geometric point $u\colon\operatorname{Spec}k\to U$, we say $f$ is \textit{stabilizer-preserving at $u$} if the natural morphism $\operatorname{Stab}(u)\to\operatorname{Stab}(f_U\circ u)$ is an isomorphism. We say $f$ is \textit{stabilizer-preserving} if it is stabilizer-preserving at every geometric point $u$ of $U$.

Let $u$ be a point in the underlying topological space of $|U|$. The \textit{orbit} of $u$, denoted by $O(u)$, is defined as the subset $t(s^{-1}(u))$ of $|U|$. A subset $Z\subseteq|U|$ is \textit{$R$-invariant} if for every $z\in Z$ and for every $r\in R$ such that $t(r)=z$, we have $s(r)\in Z$. For example, the orbit of every point of $U$ is invariant by definition. Given any subset $Z\subseteq|U|$, one readily checks that the set $s(t^{-1}(Z))$ is $R$-invariant. Note that $Z$ is contained in $s(t^{-1}(Z))$. If $Z$ is closed with complement $V$, then the set $V^\prime=U\setminus s(t^{-1}(Z))$ is the largest saturated subset of $V$. We say $V$ is \textit{saturated} if $V^\prime=V$.

For every supergroupoid $s,t\colon R\rightrightarrows U$, there is an associated superstack $[U/R]$ in the sense of \cite[Section\ 2.2]{BH25}. See also \cite[\href{https://stacks.math.columbia.edu/tag/044Q}{Tag 044Q}]{stacks-project}. Conversely, if an algebraic superstack $\mathcal{X}$ admits a flat surjective morphism from a superscheme or algebraic superspace $U\to\mathcal{X}$ that is locally of finite presentation, then we have the following Cartesian diagram
\begin{equation}
    \begin{tikzcd}
        R\arrow[r,"{(s,t)}"]\arrow[d] & U\times U\arrow[d]\\
        \mathcal{X}\arrow[r,"\Delta_{\mathcal{X}}"] & \mathcal{X}\times\mathcal{X}.
    \end{tikzcd}
\end{equation}
This induces a flat supergroupoid $s,t\colon R\rightrightarrows U$ with the composition map $c\colon R\times_{s,U,t}R\to R$. In this case, the associated quotient stack $[U/R]$ is equivalent to $\mathcal{X}$. One readily checks this following the proof of \cite[\href{https://stacks.math.columbia.edu/tag/04T5}{Tag 04T5}]{stacks-project}, which is completely categorical. In this case, we say $s,t\colon R\rightrightarrows U$ is a groupoid presentation of $\mathcal{X}$.

We now introduce quotients by supergroupoids following the ordinary case in \cite[Section\ 2]{Ryd13}. Let $s,t\colon R\rightrightarrows U$ be a supergroupoid. We say a morphism $q\colon U\to X$ of algebraic superspaces is \textit{equivariant} if $q\circ s=q\circ t$. Let $f\colon X^\prime\to X$ be another morphism. Base changing along $f$ gives another supergroupoid $s^\prime,t^\prime\colon R^\prime\rightrightarrows U^\prime$ with an equivariant map $q^\prime\colon U^\prime\to X^\prime$. We say a property $P$ of $q$ is universal (resp. uniform) if it is stable under arbitrary (resp. flat) base change.

\begin{definition}[{cf.\ \cite[Definition 2.2]{Ryd13}}]\label{def:quotients}
    Let $s,t\colon R\rightrightarrows U$ be a supergroupoid and let $q\colon U\to X$ be an equivariant morphism. We say that $q$ is a
    \begin{enumerate}
        \item \textit{categorical quotient} if it is initial among morphisms from $U$ to an algebraic superspace. That is, if $q^\prime\colon U\to Y$ is another $R$-equivariant morphism with $q^\prime\circ s=q^\prime\circ t$, then there exists a unique morphism $\phi\colon X\to Y$ such that $q^\prime=\phi\circ q$.
        \item \textit{topological quotient} if the map $U\to X$ is the coequalizer of $|R|\rightrightarrows|U|$ in the category of topological spaces with respect to both the Zariski and constructible topology.
        \item \textit{universal topological quotient} if for every morphism $X^\prime\to X$, the base change map $U^\prime\to X^\prime$ is the coequalizer of $|R^\prime|\rightrightarrows|U^\prime|$ in the category of topological spaces with respect to both the Zariski and constructible topology.
        \item \textit{strongly topological quotient} if it is a topological quotient and the map $j_{/X}=(s,t)\colon R\to U\times_X U$ is universally submersive.
        \item \textit{geometric quotient} if it is universally topological and $\mathcal{O}_X\cong(q_{*}\mathcal{O}_U)^{R}$.
        \item \textit{strong geometric quotient} if it is strongly topological and geometric.
    \end{enumerate}
\end{definition}

\begin{proposition}[{cf.\ \cite[Proposition 2.10]{Ryd13}}]\label{prop:geometric_quotient_uniform_local}
    Let $s,t\colon R\rightrightarrows U$ be a supergroupoid and let $q\colon U\to X$ be an equivariant morphism. Let $f\colon X^\prime\to X$ be a morphism. Let $q^\prime\colon U^\prime\to X^\prime$ be the base change of $q$.
    \begin{enumerate}[topsep=0pt,noitemsep,label=\normalfont(\arabic*)]
        \item\label{prop:geometric_quotient_uniform_local1} If $f$ is flat and $q$ is a (resp. strongly) geometric quotient, then so is $q^\prime$.
        \item\label{prop:geometric_quotient_uniform_local2} If $f$ is a covering in the fpqc topology and $q^\prime$ is a topological (resp. strongly topological, geometric, strongly geometric) quotient, so is $q$.
    \end{enumerate}
\end{proposition}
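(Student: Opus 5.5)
The plan is to follow Rydh's proof of \cite[Proposition 2.10]{Ryd13}. It reduces every property in \Cref{def:quotients} to statements about topological spaces, about base change of modules of invariants, and about fpqc descent. Each of these carries over verbatim to superspaces, because they depend only on the underlying topological spaces $|U|$, $|R|$ and on flatness of $\mathbb{Z}_2$-graded modules.

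For \ref{prop:geometric_quotient_uniform_local1}, I will first note that being universally topological is stable under any base change, by definition. Universal submersiveness of $j_{/X}$ is likewise stable under base change, since $R^\prime\to U^\prime\times_{X^\prime}U^\prime$ is the base change of $j_{/X}$ along $X^\prime\to X$. So the only real point is that $\mathcal{O}_{X^\prime}\cong(q^\prime_*\mathcal{O}_{U^\prime})^{R^\prime}$.

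To prove this, I write the invariants as an equalizer of sheaves of $\mathcal{O}_X$-supermodules:
\[
0\to(q_*\mathcal{O}_U)^R\to q_*\mathcal{O}_U\rightrightarrows (q\circ s)_*\mathcal{O}_R,
\]
where the two arrows are $s^\#$ and $t^\#$. The question is étale local on $X$ and $X^\prime$, so I may assume both are affine superschemes. I then check that $q_*$ commutes with flat base change via the flat base change theorem for quasi-coherent sheaves on superschemes (\cite[Appendix A]{BHP23}). If $U$ or $R$ is not quasi-compact, I instead compare sections on étale opens directly. Since $f$ is flat, tensoring with $\mathcal{O}_{X^\prime}$ preserves equalizers, i.e.\ kernels. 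Hence $(q^\prime_*\mathcal{O}_{U^\prime})^{R^\prime}\cong f^*(q_*\mathcal{O}_U)^R\cong\mathcal{O}_{X^\prime}$.

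For \ref{prop:geometric_quotient_uniform_local2}, I treat the topological statements first. An fpqc covering $f$ is universally submersive for both the Zariski and the constructible topology, since this depends only on the bosonic reductions, where Rydh's argument applies. I then argue that $|U|\to|X|$ is the coequalizer of $|R|\rightrightarrows|U|$. Surjectivity, and the identification of fibres with orbits, descend because $|U^\prime|\to|U|\times_{|X|}|X^\prime|$ is surjective. The quotient topology descends because $|X^\prime|\to|X|$ is submersive. For the strong version, universal submersiveness of $j_{/X}$ descends along the fpqc covering $U^\prime\times_{X^\prime}U^\prime\to U\times_XU$, because a composite $g\circ h$ universally submersive with $h$ universally submersive forces $g$ universally submersive. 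For the geometric version, universal topologicality descends by the same argument after an arbitrary base change. The isomorphism $\mathcal{O}_X\to(q_*\mathcal{O}_U)^R$ becomes an isomorphism after the faithfully flat base change $f$ by the computation in part \ref{prop:geometric_quotient_uniform_local1}, so it is an isomorphism by faithfully flat descent for supermodules. That descent holds since it is a statement about the underlying ungraded modules over the even ring.

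I expect the main obstacle to be the flat base change compatibility of $q_*$ when $q$ is not quasi-compact and quasi-separated. Rydh handles this by working étale locally and using that sections over étale opens form an equalizer. I would copy this, checking only that flatness of a supercommutative algebra over $A$ gives exactness of $-\otimes_A B$ on graded modules. This holds because it is ordinary flatness of the underlying module.
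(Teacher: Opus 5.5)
Your proposal is correct and follows essentially the same route as the paper: part (1) by flat base change of the equalizer sequence $\mathcal{O}_X\to q_*\mathcal{O}_U\rightrightarrows(q\circ s)_*\mathcal{O}_R$, and part (2) by submersiveness of fpqc coverings in both the Zariski and constructible topologies together with fpqc-locality of exactness of that sequence; you also fill in details the paper leaves implicit (descent of ``fibres are orbits'', the cancellation argument for $j_{/X}$, and universality via arbitrary base change). The point you flag as delicate, commuting $q_*$ with flat base change when $q$ is not quasi-compact and quasi-separated, is simply asserted in the paper's proof as well, and your suggested fix via sections over \'etale opens does not really address it, since $X^\prime\to X$ is flat rather than \'etale; on that point your argument is exactly as complete as the paper's.
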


\begin{proof}
    Consider the following sequence of sheaves for the \'etale topology
    \begin{equation*}
        \begin{tikzcd}
            \mathcal{O}_X\arrow[r] & q_*\mathcal{O}_U\arrow[r,shift left,"s^{*}"]\arrow[r,shift right,"t^{*}",swap] & (q\circ s)_*\mathcal{O}_R.
        \end{tikzcd}
    \end{equation*}
    \ref{prop:geometric_quotient_uniform_local1} holds because the exactness of this sequence is compatible with flat base change. For \ref{prop:geometric_quotient_uniform_local2}, we first note that coverings in the fpqc topology are submersive in both the Zariski and constructible topology. Therefore the statement holds for topological and strongly topological quotients. The statement for geometric and strongly geometric quotients then follows from the fact that the exactness of the equalizer sequence above is local in the fpqc topology.
\end{proof}

We wish to remark that a (strongly) geometric quotient is always uniform but not necessarily universal. Moreover, one readily checks that being a (strong) geometric quotient is a local property in fpqc topology. A priori, topological quotients and geometric quotients are not categorical, however. We will show in \Cref{sec:descent_condition} that a strong geometric quotient that is integral satisfies the descent condition, and thus is categorical.

\section{\'Etale descent in the super case}\label{sec:descent}

\subsection{Integral extensions of superrings}\label{sec:integral}

As remarked earlier, the theory of commutative superalgebras is developed in detail in Westra's thesis \cite{Westra:2009}. One of the few concepts not discussed is the integral extension of superrings. We propose a definition of integral morphisms, which is sufficient for our application. We will also extend properties of ordinary integral morphisms to the super case. We begin with the definition of an integral morphism of superrings.

\begin{definition}\label{def:integral_maps}
    Let $\phi\colon A\to B$ be a morphism of superrings. An element $b\in B$ is integral over $A$ if there exists a monic even polynomial $P(x)\in A_{0}[x]$ such that $\phi(P)(b)=0$, where $\phi(P)(x)$ is the image of $P(x)$ under the map $A[x]\to B[x]$ induced by $\phi$. We say $\phi$ is integral if every element $b\in B$ is integral over $A$. If $\phi$ is integral and injective, then we say $\phi$ is an integral extension of superrings.
\end{definition}

\begin{lemma}\label{lem:integral_bosonic}
    Let $\phi\colon A\to B$ be a morphism of superrings. An element $b=b_0+b_1\in B$ is integral over $A$ if and only if $b_0$ is integral over $A_0$. Moreover, $\phi\colon A\to B$ is integral if and only if the induced map $\phi_0\colon A_0\to B_0$ is integral.
\end{lemma}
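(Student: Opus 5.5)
The plan is to reduce everything to the nilpotence of odd elements and the classical fact that integral elements over a ring form a subring. Throughout, $P$ ranges over monic polynomials with coefficients in $A_0$. Since $\phi$ preserves gradings, $\phi(P)$ has even coefficients $\phi(a_i)\in B_0$, and these are central in $B$ by supercommutativity.

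For the first statement, suppose first that $b=b_0+b_1$ is integral, with $\phi(P)(b)=0$. Expand $b^k=(b_0+b_1)^k$. Since $b_0$ is even, it commutes with $b_1$, so the binomial theorem applies. Since $b_1^2=0$, we get $b^k=b_0^k+k b_0^{k-1}b_1$. Hence
\[
\phi(P)(b)=\phi(P)(b_0)+\phi(P)'(b_0)\,b_1,
\]
where the first term lies in $B_0$ and the second in $B_1$. Taking the even component of $\phi(P)(b)=0$ gives $\phi(P)(b_0)=0$, so $b_0$ is integral over $A_0$ via $\phi_0$. Conversely, suppose $\phi(P)(b_0)=0$. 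The odd component is $\phi(P)'(b_0)b_1$, which need not vanish, so I would use $P^2$ instead. By the same expansion,
\[
\phi(P^2)(b)=\phi(P)(b)^2=\bigl(\phi(P)'(b_0)b_1\bigr)^2=\phi(P)'(b_0)^2 b_1^2=0.
\]
Here the first equality holds because evaluation at $b$ is a ring map from the commutative ring $A_0[x]$: the coefficients are central and powers of $b$ commute with each other. Since $P^2$ is monic and even, $b$ is integral over $A$.

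For the second statement, by the first part $\phi$ is integral if and only if $b_0$ is integral over $A_0$ for every $b\in B$. This is the same as requiring every element of $B_0$ to be integral over $A_0$, that is, $\phi_0$ is integral. Note also that an element of $B_0$ is integral over $A$ in the sense of the definition exactly when it is integral over $A_0$ classically via $\phi_0$, since the relevant polynomials are the same.

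The main point requiring care is that polynomial evaluation behaves multiplicatively in a noncommutative superring. This holds because only even coefficients and powers of a single element are involved, so everything happens inside the commutative subring $B_0[b_1]$. The only genuine idea is squaring $P$ to kill the odd term.
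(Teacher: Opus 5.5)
Your proof is correct and follows essentially the same route as the paper. You expand $\phi(P)(b)=\phi(P)(b_0)+b_1\,\phi(P)'(b_0)$ using $b_1^2=0$, read off the even component for the forward direction, and replace $P$ by $P^2$ to kill the odd term for the converse; you also justify why evaluation at $b$ is multiplicative and prove the statement about $\phi_0$, both of which the paper leaves implicit.
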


\begin{proof}
    Let $b=b_0+b_1$ be an element in $B$. For every even polynomial $P(x)\in B_0[x]$, we have$$P(b)=P(b_0)+b_1P^\prime(b_0).$$If $b$ is integral over $A$, then there exists a monic even polynomial $P(x)\in A_{0}[x]$ such that $\phi(P)(b)=0$. Then we have$$P(b_0)+b_1P^\prime(b_0)=\phi(P)(b)=0$$as elements in $B$. It follows that $P(b_0)=0$ as desired. Conversely, suppose $b_0$ is integral over $A_0$. Then there exists a monic even polynomial $P(x)\in A_{0}[x]$ such that $\phi(P)(b_0)=0$. We claim that $\phi(P^2)(b)=0$. Indeed, this follows from the fact that $\phi(P)(b)=b_1\phi(P^\prime)(b_0)$, and we have$$\phi(P^2)(b)=(\phi(P)(b))^2=(b_1\phi(P^\prime)(b_0))^2=0$$since $b_1^2=0$. This finishes the proof.
\end{proof}

An immediate consequence of our definition is the following characterization of integral elements.

\begin{lemma}\label{lem:integral_module}
    Let $\phi\colon A\to B$ be a morphism of superrings. An element $b\in B$ is integral over $A$ if and only if there exists a finitely generated $A$-submodule $M$ of $B$ such that $1_B\in M$ and $bM\subseteq M$.
\end{lemma}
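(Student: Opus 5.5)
The plan is to adapt the classical determinant-trick argument to the supercommutative setting. Throughout, a finitely generated $A$-submodule means one generated by finitely many homogeneous elements.

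\emph{Forward direction.} Suppose $b$ is integral over $A$, with monic even polynomial $P(x)=x^n+a_{n-1}x^{n-1}+\dots+a_0\in A_0[x]$ and $\phi(P)(b)=0$. Take $M=\sum_{i=0}^{n-1}A\,b^i$. Since $b$ need not be homogeneous, this is not obviously generated by homogeneous elements. We fix this by enlarging it to
\[
M=\sum_{i=0}^{n-1}\bigl(A\,b_0^i+A\,b_0^ib_1\bigr),
\]
which is generated by homogeneous elements, using the identity $b^i=b_0^i+ib_0^{i-1}b_1$ (valid since $b_1^2=0$ and $b_0$ is even). By \Cref{lem:integral_bosonic}, $\phi(P)(b_0)=0$, so $b_0^n$ lies in $\sum_{i<n}\phi(A_0)b_0^i$. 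Hence $b_0M\subseteq M$, and $b_1M\subseteq M$ because $b_1^2=0$ and $b_1b_0^i=b_0^ib_1$. Therefore $bM\subseteq M$ and $1_B\in M$.

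\emph{Converse.} Let $M$ be generated by homogeneous $m_1,\dots,m_k$, with $1_B\in M$ and $bM\subseteq M$. The key reduction is to work with $b_0$ and the even subring $A_0$. We would like $b_0M\subseteq M$, but a non-homogeneous $b$ preserving $M$ does not split this way directly, since $M$ is a graded submodule only if $bm$ decomposes into graded parts inside $M$. Because the $m_j$ are homogeneous, $M$ is a graded submodule, so both homogeneous components of $bm_j$ lie in $M$. These components are $b_0m_j$ and $b_1m_j$, sorted by parity. So $b_0M\subseteq M$.

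Now regard $M$ as an $A_0$-module. It is generated by the finitely many elements $m_j$ and $\alpha m_j$, where $\alpha$ runs over finitely many odd elements of $A$. This is where care is needed: $A_1$ might be infinitely generated. To handle it, we avoid the $A_0$-generation issue and run the determinant trick over the commutative ring $A_0$ on the even part $M_0$ together with $M_1$. Writing $b_0m_j=\sum_l c_{jl}m_l$ with homogeneous $c_{jl}\in A$ of parity $|m_j|+|m_l|$ is possible since $b_0$ is even. The odd entries $c_{jl}$ cause trouble for the determinant.

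The trick we use instead is to pass to the bosonic reduction. Modulo the ideal $J$ generated by the finitely many odd coefficients appearing, which is nilpotent, we obtain a matrix with commuting even entries. The Cayley--Hamilton argument on $\bar M=M/JM$, using $1\in M$, gives a monic $Q\in A_0[x]$ with $Q(b_0)\in JM\cap B_0$ after lifting. Iterating, $Q(b_0)^N$ lies in $J^NM=0$ for $N$ larger than the nilpotency order. So $Q^N(b_0)=0$, and $b_0$ is integral over $A_0$. Then \Cref{lem:integral_bosonic} gives that $b$ is integral over $A$.

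The main obstacle is this nilpotent-ideal iteration. In particular, we must verify that $Q(b_0)J^iM\subseteq J^{i+1}M$, which holds because $b_0$ commutes with everything and $J^iM/J^{i+1}M$ admits the same reduced relation.
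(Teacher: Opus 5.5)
Your proof is correct. The forward direction is the paper's, using the module spanned by $b_0^i$ and $b_0^ib_1$, but your converse handles the odd structure constants $c_{jl}$ in a genuinely different way.

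The paper \emph{absorbs} them. It adjoins the finitely many odd $c_{jl}$ to $A_0$, giving a subalgebra $A'\subseteq A$ that is a finite $A_0$-module. The $A'$-submodule $M'$ generated by $1$ and the $m_j$ is then a finite $A_0$-module that contains $1$ and is stable under $b_0$, so the classical criterion applies in one step.

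You instead \emph{kill} them modulo the nilpotent ideal $J$ they generate. Cayley--Hamilton with a purely even matrix gives $Q(b_0)M\subseteq JM$, and you then bootstrap to $Q(b_0)^NM\subseteq J^NM=0$. The paper's route lands directly on the textbook statement with no iteration. Yours never needs any module to be finite over $A_0$, only over $A$. It is the same ``work modulo a nilpotent ideal and raise to a power'' device the paper uses in the proof of \Cref{lem:integral_is_affine+universally_closed}. It also gives a sharper degree: $k(r+1)$ for $k$ generators and $r$ odd constants, versus up to $k2^r$ from the $A_0$-generators of $M'$.

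When you write it up, make the following adjustments.
\begin{itemize}
\item State explicitly the one fact your converse runs on: $Q(b_0)\in B_0$ is central in $B$. This upgrades $Q(b_0)\bar m_j=0$ for the $A$-generators to $Q(b_0)\bar M=0$, hence $Q(b_0)\cdot 1\in JM$, even though $\bar M$ need not be generated over $A_0$ by the $\bar m_j$. It is also the entire justification of $Q(b_0)J^iM=J^iQ(b_0)M\subseteq J^{i+1}M$.
\item Delete the sentence claiming $M$ is generated over $A_0$ by the $m_j$ and finitely many $\alpha m_j$. It is false in general and you never use it.
\item No ``lifting'' is needed, since the surviving even $c_{jl}$ already lie in $A_0$.
\item In the forward direction, \Cref{lem:integral_bosonic} as stated only gives that $b_0$ is integral. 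The fact that $\phi(P)(b_0)=0$ for the same $P$ comes from the parity splitting $\phi(P)(b)=\phi(P)(b_0)+b_1\phi(P')(b_0)$.
\end{itemize}
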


\begin{proof}
    Choose homogeneous generators $m_0=1,m_1,\cdots,m_n$ of $M$ as an $A$-module. Since $bM\subseteq M$, we must have $b_0M\subseteq M$. It follows that for every $0\leq i\leq n$, we have$$b_0m_i=\sum_{j}a_{ij}m_j$$for some homogeneous elements $a_{ij}\in A$. Note that there are only finitely many $a_{ij}$ that are odd for every $j$. Let $A^\prime\subseteq A$ be the $A_0$-superalgebra generated by the odd $a_{ij}$'s. Let $M^\prime$ be the $A^\prime$-module generated by $m_0=1,m_1,\cdots,m_n$. By construction, $M^\prime$ is a finite $A_0$-module and $b_0M^\prime\subseteq M^\prime$. We are now in the ordinary case and \cite[\href{https://stacks.math.columbia.edu/tag/05BT}{Tag 05BT}]{stacks-project} implies that $b_0$ is integral. By \Cref{lem:integral_bosonic}, $b$ is also integral. Conversely, suppose $b\in B$ is integral over $A$. Then there exists a monic even polynomial $P(x)\in A_{0}[x]$ such that $\phi(P)(b)=0$. Let $M$ be the $A$-module generated by $\{b_0^i,b_1b_0^i\}_{0\leq i<d}$ where $d$ is the degree of $P(x)$. One readily checks that $1_B\in M$ and $bM\subseteq M$, and the result follows.
\end{proof}

Integral extensions of superrings and, more generally, noncommutative rings are also discussed in \cite{ATT72} and \cite{RTT23}. While \cite[Definition\ 5.1]{RTT23} is more general, we believe \Cref{def:integral_maps} behaves better in the geometric setting we are working in. The following lemma relates integral and finite morphisms of superrings.

\begin{lemma}\label{lem:integral+finite_type=finite}
    Let $\phi\colon A\to B$ be a morphism of superrings. Then the following are equivalent.
    \begin{enumerate}[topsep=0pt,noitemsep,label=\normalfont(\arabic*)]
        \item $\phi\colon A\to B$ is finite,
        \item $\phi$ is integral and of finite type, and
        \item $B$ is generated as an $A$-superalgebra by finitely many elements that are integral over $A$.
    \end{enumerate}
\end{lemma}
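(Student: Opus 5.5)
I will prove the cycle of implications (1)$\Rightarrow$(2)$\Rightarrow$(3)$\Rightarrow$(1), using \Cref{lem:integral_module} as the main tool.

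For (1)$\Rightarrow$(2), suppose $B$ is generated as an $A$-module by finitely many homogeneous elements. Then it is certainly generated as an $A$-superalgebra by them, so $\phi$ is of finite type. For integrality, take $M=B$ itself. This is a finitely generated $A$-submodule of $B$ containing $1_B$, and $bM\subseteq M$ for every $b\in B$. Hence \Cref{lem:integral_module} shows every $b$ is integral.

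The implication (2)$\Rightarrow$(3) is immediate, since finitely many superalgebra generators exist and all elements are integral.

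For (3)$\Rightarrow$(1), let $b_1,\dots,b_n$ be integral generators. By \Cref{lem:integral_bosonic} I may replace each $b_i$ by its homogeneous components, which are again integral, so assume the generators are homogeneous.

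\emph{Odd generators.} If $b_i$ is odd, then $b_i^2=0$. So every monomial in the $b_i$ contains each odd generator with exponent at most $1$.

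\emph{Even generators.} If $b_i$ is even, it satisfies a monic polynomial $P_i\in A_0[x]$ of degree $d_i$. So $b_i^{d_i}$ lies in the $A$-span of $1,b_i,\dots,b_i^{d_i-1}$.

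\emph{Reduction of monomials.} Using supercommutativity, I reorder any monomial up to sign. I then reduce the even exponents below $d_i$ by induction on total degree. This shows that $B$ is spanned as an $A$-module by the finitely many homogeneous monomials $\prod_{\text{even}} b_i^{e_i}\prod_{\text{odd}} b_j^{\epsilon_j}$ with $e_i<d_i$ and $\epsilon_j\in\{0,1\}$. Hence $B$ is finite over $A$.

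The only point needing care is that the span is an $A$-submodule closed under multiplication, so that the induction goes through. This is routine because the relations are even and all manipulations are within a supercommutative ring. The main obstacle, such as it is, lies in the use of \Cref{lem:integral_module} in the first implication, which has already been established.
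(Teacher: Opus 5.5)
Your proof is correct and follows the paper's own argument: (1)$\Rightarrow$(2) by applying Lemma~\ref{lem:integral_module} to $M=B$, (2)$\Rightarrow$(3) trivially, and (3)$\Rightarrow$(1) by showing that the monomials in the generators with bounded exponents span $B$ as an $A$-module. Two details the paper leaves implicit are made explicit in your version: you reduce to homogeneous generators via Lemma~\ref{lem:integral_bosonic}, and you bound the exponents of odd generators using $b_j^2=0$.
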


\begin{proof}
     (2) $\implies$ (3) is immediate. (1) $\implies$ (2) follows from applying \Cref{lem:integral_module} to $M=B$. For (3) $\implies$ (1), let $b_{1},\cdots,b_{n}$ be a set of homogeneous generators of $B$ as an $A$-superalgebra that are integral over $A$. Since $b_{i}$ is integral over $A$, it satisfies a monic polynomial $P_{i}(x)\in A_{0}[x]$. Let $d_{i}$ be the degree of $P_{i}(x)$. Then the set$$\{b_{1}^{s_{1}}\cdots b_{n}^{s_{n}}\ |\ 0\leq s_{i}< d_{i},\ 1\leq i\leq n\}$$generates $B$ as an $A$-module.
\end{proof}

We also record the following variant of the Artin--Tate lemma for superrings, which is essential in the upcoming sections of this article.

\begin{lemma}\label{lem:super_artin_tate}
    Let $A\subseteq B\subseteq C$ be extensions of superrings. If $A$ is Noetherian and $C$ is a finitely generated $A$-superalgebra that is integral over $B$, then $B$ is also a finitely generated $A$-superalgebra.
\end{lemma}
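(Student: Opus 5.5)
The plan is to imitate the classical proof of the Artin--Tate lemma. First, produce a finitely generated $A$-subsuperalgebra $B^\prime\subseteq B$ over which $C$ is a finite module. Then use the Noetherianity of $B^\prime$ to conclude that $B$ is finite over $B^\prime$, and hence finitely generated over $A$.

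\textbf{Step 1: the subalgebra $B^\prime$.} Choose homogeneous generators $c_1,\dots,c_n$ of $C$ as an $A$-superalgebra. Each $c_i$ is integral over $B$, so there is a monic even polynomial $P_i(x)\in B_0[x]$ with $P_i(c_i)=0$. Record also the finitely many homogeneous elements of $B$ needed to express products. Concretely, by the proof of \Cref{lem:integral+finite_type=finite}, the monomials $c_1^{s_1}\cdots c_n^{s_n}$ with $s_i<d_i$ generate $C$ as a $B$-module once the relations $P_i(c_i)=0$ are available. Let $B^\prime\subseteq B$ be the $A$-subsuperalgebra generated by the coefficients of all the $P_i$.
\begin{itemize}
\item Each $c_i$ is integral over $B^\prime$.
\item $C$ is generated as a $B^\prime$-superalgebra by the $c_i$, since it is already generated by them over $A\subseteq B^\prime$.
\end{itemize}
By \Cref{lem:integral+finite_type=finite} ((3)$\Rightarrow$(1)), $C$ is a finite $B^\prime$-module. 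Moreover, $B^\prime$ is a finitely generated $A$-superalgebra, and therefore Noetherian, by the super Hilbert basis statement recalled in \Cref{sec:basics}. In the odd case one uses that finitely many odd generators only add nilpotents, and the paper asserts that finitely generated algebras over Noetherian superrings are Noetherian.

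\textbf{Step 2: finiteness of $B$ over $B^\prime$.} $C$ is a finitely generated module over the Noetherian superring $B^\prime$, so it is a Noetherian $B^\prime$-module. I would check this by reducing to the ordinary fact: a graded submodule of $C$ is in particular a $B^\prime$-submodule in the ungraded sense, and $B^\prime$ is Noetherian as an ungraded ring. The latter holds because graded ideals satisfying ACC imply ACC on all ideals, which one can see since $B^\prime$ is finite over the image of the even Noetherian part. I expect this point to be the main obstacle, namely verifying that super-Noetherian (ACC on $\mathbb{Z}_2$-graded ideals) gives the needed module-theoretic Noetherianity. My first attempt would be to show that $B^\prime_0$ is Noetherian and that $B^\prime$ is a finite $B^\prime_0$-module, generated by $1$ and finitely many odd generators. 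Then $C$ is a finite module over the ordinary Noetherian ring $B^\prime_0$.

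\textbf{Step 3: conclusion.} The submodule $B\subseteq C$ is a finitely generated $B^\prime_0$-module, hence a finitely generated $B^\prime$-module. Adjoining finitely many homogeneous module generators of $B$ (take homogeneous components) to the generators of $B^\prime$ yields finitely many generators of $B$ as an $A$-superalgebra.
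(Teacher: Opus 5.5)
Your argument is correct, but it takes a different route from the paper.

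The paper does not rerun the Artin--Tate argument. It passes to even parts and notes that $C_0$ is a finitely generated $A_0$-algebra that is integral over $B_0$ (by \Cref{lem:integral_bosonic}). It then applies the classical Artin--Tate lemma to $A_0\subseteq B_0\subseteq C_0$, which makes $B_0$ finitely generated and hence Noetherian. The odd part is treated separately: $C_1$ is a finite $C_0$-module, hence a finite $B_0$-module, so its $B_0$-submodule $B_1$ is finitely generated.

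You instead carry out the classical proof directly in the super category. You build $B^\prime$ from the coefficients of the even monic equations and use \Cref{lem:integral+finite_type=finite} to get that $C$ is module-finite over $B^\prime$. You then handle $B_0$ and $B_1$ at once, as submodules of $C$ over the Noetherian ring $B^\prime_0$.

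The paper's version is shorter because it outsources the core to the ordinary lemma. It does rely on unstated bookkeeping, for instance that $C_0$ is finitely generated over $A_0$, which uses that $A_1$ is a finite $A_0$-module. Yours is self-contained and gives the slightly sharper conclusion that $B$ is module-finite over an explicit finitely generated subalgebra.

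The point you flagged as the main obstacle is routine. For an ideal $I\subseteq B^\prime_0$, the graded ideal $IB^\prime$ has even part $I$. For a $B^\prime_0$-submodule $M\subseteq B^\prime_1$, the graded ideal $MB^\prime$ has odd part $M$. So ACC on graded ideals of $B^\prime$ shows that $B^\prime_0$ is Noetherian and $B^\prime_1$ is a finite $B^\prime_0$-module. The detour through ACC on ungraded ideals is unnecessary, as is the remark about recording elements needed to express products, since the coefficients of the $P_i$ already suffice.
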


\begin{proof}
    Since $A$ is a Noetherian superring, $A_0$ is a Noetherian ring. Since $C$ is a finitely generated $A$-superalgebra, $C_0$ is a finitely generated $A_0$-algebra. It follows that $C_0$ is a finitely generated $B_0$-algebra. Indeed, we have $A_0\subseteq B_0$, and the same set of $A_0$-algebra generators will automatically be a set of $B_0$-algebra generators. By \Cref{lem:integral_bosonic}, we see that $C_0$ is integral over $B_0$. It follows that $C_0$ is a finitely generated $B_0$-module. The ordinary Artin--Tate lemma tells us that $B_0$ is a finitely generated $A_0$-algebra. It follows that $B_0$ is Noetherian. Note that $C$ is also Noetherian as it is finitely generated over $A$. It follows that $C_1$ is a finitely generated $C_0$-module and thus a finitely generated $B_0$-module. Therefore $B_1$ must be a finitely generated $B_0$-module because it is a $B_0$-submodule of a finitely generated $B_0$-module. It follows that $B$ is a finitely generated $A$-algebra as desired. 
\end{proof}

The following lemma tells us that integral extensions of superrings induce closed and surjective morphisms on the spectra. 

\begin{lemma}[{cf.\ \cite[\href{https://stacks.math.columbia.edu/tag/00GQ}{Tag 00GQ}]{stacks-project}}]\label{lem:integral_surjective}
    Let $A\to B$ be an integral extension of superrings. Then the induced map $\phi^{*}\colon\operatorname{sSpec}(B)\to\operatorname{sSpec}(A)$ is universally closed and surjective. 
\end{lemma}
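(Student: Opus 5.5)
The plan is to reduce the statement to the ordinary result \cite[\href{https://stacks.math.columbia.edu/tag/00GQ}{Tag 00GQ}]{stacks-project} for the even parts. Since every prime ideal of a superring contains the ideal generated by the odd elements, the map $\mathfrak{p}\mapsto\mathfrak{p}\cap A_0$ identifies $|\operatorname{sSpec}(A)|$ homeomorphically with $|\operatorname{Spec}(A_0)|$, and similarly for $B$. Under these identifications, $\phi^*$ becomes the map $\operatorname{Spec}(B_0)\to\operatorname{Spec}(A_0)$ induced by $\phi_0$. Concretely, a prime $\mathfrak{p}$ corresponds to $\mathfrak{p}_0$, with inverse $\mathfrak{q}\mapsto\mathfrak{q}\oplus A_1$, since $A_1^2\subseteq\mathfrak{q}$ and homogeneous products can be checked directly. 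One also has to check that $\mathfrak{q}\oplus A_1$ is an ideal and that the topologies agree; both are routine, because $D(f)$ for odd $f$ is empty and $D(f_0+f_1)=D(f_0)$, as $f_1$ is nilpotent.

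By \Cref{lem:integral_bosonic}, $\phi_0\colon A_0\to B_0$ is integral, and it is injective because $\phi$ is. The ordinary lemma then shows that $\operatorname{Spec}(B_0)\to\operatorname{Spec}(A_0)$ is closed and surjective, which gives closedness and surjectivity of $\phi^*$.

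For universal closedness, take a superscheme $T\to\operatorname{sSpec}(A)$. Closedness is local on the target, so we may assume $T=\operatorname{sSpec}(A')$. The base change is then $\operatorname{sSpec}(A'\otimes_A B)\to\operatorname{sSpec}(A')$, and we need this map to be closed. The map $A'\to A'\otimes_A B$ is integral, since integrality is stable under base change. To see this, use \Cref{lem:integral_module}: for $b$ integral over $A$, the element $1\otimes b$ preserves the image of $A'\otimes M$. Moreover, integral elements form a subring: if $b$ and $b'$ preserve $M$ and $M'$, then $b+b'$ and $bb'$ preserve $MM'$. Pure tensors $a'\otimes b$ are integral, and so sums of them are too. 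This base change need not be injective, however. So I will factor it through its image $C$, writing $A'\to C\hookrightarrow A'\otimes_A B$. The second map is an integral extension, so it is closed by the previous paragraphs. The first map is surjective, so it induces a closed immersion. The composite is therefore closed.

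The main obstacle is the subring-closure of integral elements in the super setting, where $MM'$ must be a finitely generated $A$-submodule containing $1$. Supercommutativity makes this work, but I will verify it carefully. Alternatively, I could pass entirely to even parts, where $(A'\otimes_A B)_0$ is integral over $A'_0$ via \Cref{lem:integral_bosonic}, and apply the ordinary statement that integral maps are closed.
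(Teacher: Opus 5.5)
Your proposal is correct and follows essentially the same route as the paper: identify the underlying spaces of $\operatorname{sSpec}(A)$ and $\operatorname{sSpec}(B)$ with those of $\operatorname{Spec}(A_0)$ and $\operatorname{Spec}(B_0)$, pass to the ordinary integral map $A_0\to B_0$ via \Cref{lem:integral_bosonic}, and get universal closedness from the stability of super-integrality under base change. The differences are minor: you prove base-change stability via \Cref{lem:integral_module} and closure of integral elements under sums and products, where the paper cites Tag 02JK together with \Cref{lem:integral+finite_type=finite}, and you handle non-injective base changes by factoring through the image, where the paper invokes the ordinary fact (Tag 01WM) that integral ring maps induce closed maps on spectra.
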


\begin{proof}
    First of all, integral homomorphism of superrings are stable under base change. This follow verbatim from the proof of \cite[\href{https://stacks.math.columbia.edu/tag/02JK}{Tag 02JK}]{stacks-project} using \Cref{lem:integral+finite_type=finite}. For any integral homomorphism $A\to B$, we have $A_0\to B_0$ is also integral by \Cref{lem:integral_bosonic}. It follows that the induced map $\phi_0^*\colon\operatorname{Spec}(B_0)\to\operatorname{Spec}(A_0)$ is universally closed by \cite[\href{https://stacks.math.columbia.edu/tag/01WM}{Tag 01WM}]{stacks-project}. Since $A\to B$ is an integral extension, so is $A_0\to B_0$. It follows from \cite[\href{https://stacks.math.columbia.edu/tag/00GQ}{Tag 00GQ}]{stacks-project} that $\phi_0^*$ is surjective. Moreover, we have $|\operatorname{sSpec}(A)|=|\operatorname{Spec}(A_0)|$ and $|\operatorname{sSpec}(B)|=|\operatorname{Spec}(B_0)|$. Therefore $\phi^*$ is closed and surjective as desired.
\end{proof}

As in the ordinary case, we define integral morphisms of superschemes and algebraic superspaces as follows.

\begin{definition}\label{def:integral_morphism_super}
    Let $f\colon X\to Y$ be a morphism of superschemes. We say $f$ is integral if it is affine and for every affine open subset $\operatorname{sSpec}(A)=U\subseteq Y$ with preimage $\operatorname{sSpec}(B)=f^{-1}(U)\subseteq X$, the induced map of superrings $A\to B$ is integral. 
\end{definition}

\begin{lemma}\label{lem:integral_is_affine+universally_closed}
    A morphism of superschemes is integral if and only if it is affine and universally closed. An integral morphism of superschemes of finite type is finite.
\end{lemma}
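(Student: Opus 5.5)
The plan is to reduce both assertions to the bosonic (even) part, where the ordinary results are available, using \Cref{lem:integral_bosonic} and the fact that the underlying topological spaces of a superscheme and of its even part agree. Both conditions in the first assertion are local on the target, so we may assume $Y=\operatorname{sSpec}(A)$ is affine. We may also assume $f$ is affine, since both conditions include it; then $X=\operatorname{sSpec}(B)$ and $f$ comes from a morphism of superrings $\phi\colon A\to B$.

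For the forward direction, let $f$ be integral. The proof of \Cref{lem:integral_surjective} shows that integral morphisms of superrings are stable under base change, and that for an integral $A\to B$ the map $\operatorname{Spec}(B_0)\to\operatorname{Spec}(A_0)$ is universally closed, hence closed. Since $|\operatorname{sSpec}(B)|=|\operatorname{Spec}(B_0)|$ and $|\operatorname{sSpec}(A)|=|\operatorname{Spec}(A_0)|$, the map $\operatorname{sSpec}(B)\to\operatorname{sSpec}(A)$ is closed. Closedness after an arbitrary base change is local on the target of the base change, so it suffices to test against affine superschemes $\operatorname{sSpec}(A')\to Y$. There the base change is $\operatorname{sSpec}(A'\otimes_A B)$ with $A'\to A'\otimes_A B$ integral, so it is closed by the same argument. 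Hence $f$ is universally closed.

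For the converse, assume $f$ is affine and universally closed. By \Cref{lem:integral_bosonic} it suffices to show that $A_0\to B_0$ is integral. I would first show that $\operatorname{Spec}(B_0)\to\operatorname{Spec}(A_0)$ is universally closed and then apply the ordinary result \cite[\href{https://stacks.math.columbia.edu/tag/01WM}{Tag 01WM}]{stacks-project}; this is the main obstacle. The reason is that taking even parts does not commute with base change, so universal closedness does not transfer formally. I would instead use the ordinary criterion through the specific base change $\mathbb{A}^1$: by the Stacks argument (Tag 01WM / Tag 05JW), a morphism of rings $R\to S$ is integral as soon as $\operatorname{Spec}(S[x])\to\operatorname{Spec}(R[x])$ is closed. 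Here $x$ is an even variable and $f$ is base changed along $\mathbb{A}^1_Y\to Y$, giving $\operatorname{sSpec}(B[x])\to\operatorname{sSpec}(A[x])$, which is closed. On underlying spaces this is $\operatorname{Spec}(B_0[x])\to\operatorname{Spec}(A_0[x])$, because $(B[x])_0=B_0[x]$ and $(A[x])_0=A_0[x]$. So that map is closed. Running the Stacks proof with the ring map $A_0\to B_0$ then shows $B_0$ is integral over $A_0$: for $b\in B_0$, consider the closed set $V(bx-1)$ and use that its closed image must avoid the origin-type point, which produces the integral equation. Alternatively, I would check that the proof of \cite[\href{https://stacks.math.columbia.edu/tag/01WM}{Tag 01WM}]{stacks-project} only uses closedness after base change to $\mathbb{A}^1$, which transfers as above.

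For the second assertion, let $f$ be integral and of finite type. Locally on $Y$ we again have $A\to B$ integral, and finite type gives that $B$ is a finitely generated $A$-superalgebra. \Cref{lem:integral+finite_type=finite}, implication (2)$\Rightarrow$(1), then shows $B$ is a finite $A$-module, so $f$ is finite.
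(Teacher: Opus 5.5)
Your argument is correct. The forward implication and the finite-type statement coincide with the paper's: both just invoke the proofs of \Cref{lem:integral_surjective} and \Cref{lem:integral+finite_type=finite}. The converse, however, goes by a different route.

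The paper passes to the bosonic reduction rather than the even part. Because $(-)_{\operatorname{bos}}$ commutes with fibre products and does not change underlying spaces, $f_{\operatorname{bos}}$ is automatically affine and universally closed. So the ordinary theorem applies as a black box and makes $A_0/A_1^2\to B_0/B_1^2$ integral. One then lifts the monic equation to $P\in A_0[x]$ with $P(b_0)\in B_1^2$. Every element of $B_1^2$ is nilpotent, so some power $P^N$ kills $b_0$.

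You instead land directly on $A_0\to B_0$. You get around the failure of $(-)_0$ to commute with base change by noting that it does commute for the one base change the classical proof needs, namely $B\otimes_A A[x]=B[x]$ with $(B[x])_0=B_0[x]$. This gives a proof with no nilpotent-lifting step, and it shows that closedness after base change to the even affine line alone forces integrality. The price is that you must reopen the classical proof instead of quoting it.

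When you write that argument out, note that what the image of $V(bx-1)\cong\operatorname{Spec}((B_0)_b)$ avoids is the whole closed set $V(x)\subseteq\operatorname{Spec}(A_0[x])$, not a single point. Since the image is closed, it equals $V(I)$ with $I=\ker(A_0[x]\to(B_0)_b,\ x\mapsto b^{-1})$. Then $I+(x)=A_0[x]$ gives $g=1+r_1x+\cdots+r_nx^n\in I$. Clearing denominators in $g(b^{-1})=0$ gives $b^m(b^n+r_1b^{n-1}+\cdots+r_n)=0$ in $B_0$, which is a monic equation for $b$.
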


\begin{proof}
    Let $f\colon X\to Y$ be an affine and universally closed morphism of superschemes. We show that $f$ is integral. This is Zariski-local on $Y$, and we may assume that $X$ and $Y$ are affine superschemes. Consider the bosonic reduction $f_{\operatorname{bos}}\colon X_{\operatorname{bos}}\to Y_{\operatorname{bos}}$. Note that $f_{\operatorname{bos}}$ is affine and universally closed, and thus integral. The statement translates into the following algebra problem: if $A\to B$ is a homomorphism of superrings such that $A_0/A_1^2\to B_0/B_1^2$ is integral, then $A\to B$ is integral. To this end, let $b_0\in B_0$, and we claim that there exists a monic polynomial $P\in A_0[x]$ such that $P(b_0)\in B_1^2$. Indeed, let $\overline{b}_0$ be in the image of $b_0$ in $B_0/B_1^2$. Then there exists a monic polynomial $\overline{P}\in (A_0/A_1^2)[x]$ such that $\overline{P}(\overline{b}_0)=0$. We may then lift $\overline{P}$ to a desired polynomial $P$. Since $P(b_0)\in B_1^2$, we have $P(b_0)^N=0$ for some $N>0$. It follows that $A_0\to B_0$ is integral and the result follows from \Cref{lem:integral_bosonic}. The rest of the lemma follows immediately from \Cref{def:integral_morphism_super}, \Cref{lem:integral_surjective}, and \Cref{lem:integral+finite_type=finite}.
\end{proof}

Note that affineness and universal closedness are both stable under base change, fpqc-local on the target, and satisfy fpqc descent.  \Cref{lem:integral_is_affine+universally_closed} implies that integral morphisms also enjoy these properties. We may then extend the definition of integral morphisms from superschemes to algebraic superspaces.

\begin{definition}\label{def:integral_morphism_superspaces}
    Let $f\colon X\to Y$ be a morphism of algebraic superspaces. We say $f$ is integral if for every morphism $U\to Y$ from an affine superscheme, the map $X\times_YU\to U$ is representable by an affine superscheme integral over $U$. 
\end{definition}

\begin{lemma}\label{lem:integral_morphism_etale_local}
    Let $S$ be a superscheme. Let $f\colon X\to Y$ be a morphism of algebraic superspaces over $S$. Then the following statements are equivalent.
    \begin{enumerate}
        \item\label{lem:integral_morphism_etale_local1} $f$ is representable and integral,
        \item\label{lem:integral_morphism_etale_local2} $f$ is integral
        \item\label{lem:integral_morphism_etale_local3} There exists a surjective \'etale morphism $V\to Y$ from a superscheme such that $X\times_YV\to V$ is integral.
    \end{enumerate}
\end{lemma}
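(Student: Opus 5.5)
We will prove the cycle of implications (1)$\implies$(2)$\implies$(3)$\implies$(1), following the ordinary argument of the Stacks project for integral morphisms of algebraic spaces. The implication (1)$\implies$(2) is essentially formal. Suppose $f$ is representable and integral, in the sense that for every morphism $U\to Y$ from a superscheme the base change $X\times_YU\to U$ is an integral morphism of superschemes. Then for $U$ affine, \Cref{def:integral_morphism_super} says $X\times_YU\to U$ is affine. Hence $X\times_YU$ is an affine superscheme, integral over $U$, which is exactly \Cref{def:integral_morphism_superspaces}.

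For (2)$\implies$(3), choose an étale surjection $V\to Y$ with $V=\coprod V_i$ a disjoint union of affine superschemes. Such a $V$ exists by \Cref{def:alg_superspaces}, after covering an étale atlas by affines. By (2), each $X\times_YV_i\to V_i$ is an affine superscheme integral over $V_i$. Integrality of a morphism of superschemes is Zariski local on the target, since being affine is and \Cref{def:integral_morphism_super} is checked on affine opens. Therefore $X\times_YV\to V$ is integral.

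The substantive direction is (3)$\implies$(1), and it is where we will use \Cref{lem:integral_is_affine+universally_closed}. Let $T\to Y$ be any morphism from a superscheme, and set $T^\prime=T\times_YV$. Then $T^\prime\to T$ is an étale surjection of algebraic superspaces, and $X\times_YT^\prime\to T^\prime$ is the base change of $X\times_YV\to V$. By \Cref{lem:integral_is_affine+universally_closed}, an integral morphism of superschemes is affine and universally closed, and both properties are stable under base change. Hence $X\times_YT^\prime\to T^\prime$ is affine and universally closed, after first replacing $T^\prime$ by an étale cover by affine superschemes if $T^\prime$ is only a superspace.

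We then need to descend affineness along the étale (hence fpqc) cover $T^\prime\to T$. The plan is to use effective descent for affine morphisms: quasi-coherent superalgebras satisfy fpqc descent, since the usual faithfully flat descent argument works verbatim for $\mathbb{Z}_2$-graded modules. This shows that the sheaf $X\times_YT$ is representable by a superscheme affine over $T$. Universal closedness descends along fpqc covers because such covers are universally submersive, as noted after \Cref{lem:integral_is_affine+universally_closed}. Applying \Cref{lem:integral_is_affine+universally_closed} once more, $X\times_YT\to T$ is integral, and this holds for every $T$, which gives (1).

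The main obstacle is this descent step. We must check that the descended object is genuinely a superscheme representing $X\times_YT$, not merely an affine object over $T$ in some weaker sense. We would handle this by invoking the super version of effective fpqc descent for affine morphisms, presumably available from \cite{BHP23} or \cite{BH25}. Concretely, we form $\operatorname{sSpec}_T$ of the descended quasi-coherent $\mathcal{O}_T$-superalgebra, and then compare with $X\times_YT$ as étale sheaves. Both agree after pullback to $T^\prime$, and both are sheaves, so they agree on $T$.
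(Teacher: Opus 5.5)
Your proposal is correct and follows essentially the same route as the paper: the first two implications are formal, and for (3)$\implies$(1) you pull back along $T\to Y$, descend affineness along the étale cover $T\times_YV\to T$, and then invoke \Cref{lem:integral_is_affine+universally_closed} together with the fact that universal closedness descends along étale covers. Your extra care in identifying the descended affine superscheme with $X\times_YT$ as étale sheaves is a detail the paper leaves implicit.
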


\begin{proof}
    (\ref{lem:integral_morphism_etale_local1})$\implies$(\ref{lem:integral_morphism_etale_local2}) and (\ref{lem:integral_morphism_etale_local2})$\implies$(\ref{lem:integral_morphism_etale_local3}) are clear. For (\ref{lem:integral_morphism_etale_local3})$\implies$(\ref{lem:integral_morphism_etale_local1}), let $T\to Y$ be a morphism from an affine superscheme $T$. Let $T_V=T\times_YV$. Then $T_V\to T$ is a \'etale cover and $(X\times_YT)\times_TT_V\to T_V$ is affine and integral. By \'etale descent of affine morphisms, we see that $X\times_YT\to T$ is affine. By \Cref{lem:integral_is_affine+universally_closed}, we just need to show $X\times_YT\to T$ is universally closed. But this is \'etale local on the target. Since this is true for every such morphism $T\to Y$, the result follows.
\end{proof}

In particular, the bosonic reduction of an integral morphism of algebraic superspaces is also integral. Indeed, the statement is \'etale local on the target by \Cref{lem:integral_morphism_etale_local}. So we immediately reduce to the affine case, and the statement follows from \Cref{lem:integral_bosonic}.

\subsection{Effective descent for \'etale morphisms of superspaces}\label{sec:effective_descent}

In this subsection, we establish some effective descent results for \'etale morphisms in the category of algebraic superspaces. Throughout this section, we will generally follow the terminology used in \cite[Section\ 5]{Ryd10}.

Let $S$ be an algebraic superspace and let $P$ be a property of morphisms of algebraic superspaces. We write $\mathbf{\acute{E}t}_{P}(S)$ for the category consisting of \'etale morphisms $X\to S$ that satisfy property $P$. For example, we write $\mathbf{\acute{E}t}_{\operatorname{sep}}(S)$ for the category of separated and \'etale morphisms over $S$. Let $f\colon S^\prime\to S$ be another morphism of algebraic superspaces. Let $S^{\prime\prime}=S^\prime\times_SS^\prime$. Let $p_1,p_2\colon S^{\prime\prime}\to S^\prime$ be the canonical projections. We write $\mathbf{\acute{E}t}_{P}(f\colon S^\prime\to S)$ for the category consisting of pairs $(X^\prime\to S^\prime, \varphi)$ where $X^\prime\to S^\prime\in\mathbf{\acute{E}t}_{P}(S^\prime)$ and $\varphi\colon X^\prime\times_{S^\prime,p_1}S^{\prime\prime}\cong X^\prime\times_{S^\prime,p_2}S^{\prime\prime}$ is a descent datum satisfying the cocycle condition (e.g. \cite[page 19]{Ryd10}). By descent theory, there is a natural functor $f_P^*\colon\mathbf{\acute{E}t}_{P}(S)\to\mathbf{\acute{E}t}_{P}(f\colon S^\prime\to S)$ taking a morphism $(X\to S)$ to $(X\times_{S}S^\prime\to S^\prime, \varphi)$ where $\varphi$ is the pullback of the trivial descent datum on $X$ along $f$. We say a descent datum $\varphi$ on $X^\prime\to S^\prime$ is $\mathbf{\acute{E}t}_{P}$-effective if the pair $(X^\prime, \varphi)$ lies in the essential image of $f_P^*$. We say a morphism $f\colon S^\prime\to S$ is of effective $\mathbf{\acute{E}t}_{P}$-descent if the functor $f_P^*$ is an equivalence. We say a morphism $f\colon S^\prime\to S$ is of universal effective $\mathbf{\acute{E}t}_{P}$-descent if this morphism is of effective $\mathbf{\acute{E}t}_{P}$-descent for every morphism $T\to S$. 

The goal of this section is to show that integral surjective morphisms of algebraic superspaces are of universal effective $\mathbf{\acute{E}t}_{\text{sep}}$-descent. We do so by reducing to the ordinary case where the statement is known by \cite[Corollary 5.15]{Ryd10}. Throughout this subsection, we will assume that $S$ is an algebraic superspace over $\mathbb{Z}[1/2]$.

\begin{proposition}[{cf.\ \cite[Proposition\ A.4]{Ryd10}}]\label{prop:super_une_equivalence}
    Let $S^\prime\to S$ be a surjective closed immersion of superspaces. Then the natural functor $\mathbf{\acute{E}t}(S)\to\mathbf{\acute{E}t}(S^\prime)$ sending $(X\to S)$ to $(X\times_{S}S^\prime\to S^\prime)$ is an equivalence of categories.
\end{proposition}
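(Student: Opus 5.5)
We plan to prove this via the topological invariance of the étale site, in the form used for ordinary schemes (SGA1, Exposé IX; cf.\ \cite[Proposition\ A.4]{Ryd10} and \cite[\href{https://stacks.math.columbia.edu/tag/04DY}{Tag 04DY}]{stacks-project}). We reduce to the case of a first-order (square-zero) thickening of affine superschemes and then apply deformation theory of étale algebras.

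\emph{Reduction to nilpotent thickenings.} A surjective closed immersion $S^\prime\to S$ is a homeomorphism. Étale morphisms are étale local on the source and target. The statement is also compatible with étale localization on $S$: étale maps $U\to S$ correspond to étale maps $U^\prime\to S^\prime$ on both sides. Hence the question is étale local on $S$, and descent of étale morphisms lets us glue. So we may assume $S=\operatorname{sSpec}(A)$ is affine and $S^\prime=\operatorname{sSpec}(A/I)$, where $I$ is a locally nil ideal.

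Every superscheme is a nilpotent thickening of its bosonic reduction via the ideal $J$ generated by odd elements, and $J$ is locally nil because odd elements square to zero. Using this, it suffices to treat two cases separately: the case $S^\prime=S_{\operatorname{bos}}$, and the purely ordinary case of a surjective closed immersion of schemes $S^\prime_{\operatorname{bos}}\to S_{\operatorname{bos}}$. The ordinary case is \cite[Proposition\ A.4]{Ryd10}. For a general $I$, we write $S^\prime_{\operatorname{bos}}\to S^\prime\to S$ and $S_{\operatorname{bos}}\to S$, and use two-out-of-three for equivalences.

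When $J$ is only locally nil and not nilpotent, we must also pass to a limit. Any étale $X\to S$ of finite presentation, and any morphism between such, is defined over a finitely generated subalgebra, where $J$ is nilpotent. A standard limit argument for étale morphisms of finite presentation, together with a Zariski gluing argument, then reduces us to $I$ nilpotent. An induction on the nilpotency order finally reduces to $I^2=0$.

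\emph{The square-zero case.} Let $A\to A/I$ with $I^2=0$. We treat full faithfulness and essential surjectivity in turn.

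\begin{enumerate}
\item Full faithfulness: étale morphisms are formally étale, so the infinitesimal lifting property gives uniqueness and existence of lifts of $S$-morphisms $X^\prime\to Y^\prime$ to $X\to Y$. The lifting property for étale superschemes is standard (e.g.\ \cite[Appendix\ A]{BHP23}).
\item Essential surjectivity: we first lift locally. An étale $A/I$-superalgebra is Zariski locally standard étale, of the form $(A/I)[x]_g/(f)$ with $f$ monic and $f^\prime$ invertible, where $x$ is even. This uses the local structure of étale morphisms of superschemes, which have no odd relative coordinates. We lift $f$ and $g$ arbitrarily to $A_0$ to get an étale $A$-superalgebra lifting the given one.
\item We then glue these local lifts. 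Full faithfulness makes the local lifts unique up to unique isomorphism, so the gluing is canonical. If needed, we glue in the category of algebraic superspaces, which handles non-separated and non-schematic $X$.
\end{enumerate}

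\emph{Main obstacle.} The hardest step is the local structure theorem in (2): an étale morphism of superschemes is Zariski locally standard étale, with even generators only. We would prove it by noting that étale $X\to S$ has vanishing relative odd cotangent sheaf, so $X\cong X_{\operatorname{bos}}\times_{S_{\operatorname{bos}}}S$ locally. Concretely, $\mathcal{O}_X\cong\mathcal{O}_{X,0}\otimes$ data pulled back from $S$.

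Alternatively, we can avoid (2) by deformation theory. The obstruction to lifting lies in $\operatorname{Ext}^2(L_{X^\prime/S^\prime},I\mathcal{O}_{X^\prime})$ and the choices in $\operatorname{Ext}^1$. Both vanish since $L_{X^\prime/S^\prime}\simeq0$ for étale morphisms, provided the cotangent complex formalism extends to superrings. We would try the explicit standard-étale argument first.
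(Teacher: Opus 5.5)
Your proposal is correct in outline, but it takes a genuinely different and much longer route than the paper. The paper never works with thickenings. It applies the bosonic quotient functor instead.

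\begin{enumerate}
\item Since $S^\prime\to S$ is a surjective closed immersion, so is $S^\prime_{\operatorname{ev}}\to S_{\operatorname{ev}}$. Locally this is just $A_0\to A_0/I_0$.
\item \Cref{lem:ev_commutes_etale_base_change} says that every \'etale $X\to S$ satisfies $X\cong X_{\operatorname{ev}}\times_{S_{\operatorname{ev}}}S$ with $X_{\operatorname{ev}}\to S_{\operatorname{ev}}$ \'etale. It also says that $(-)_{\operatorname{ev}}$ commutes with \'etale base change. This gives equivalences $\mathbf{\acute{E}t}(S)\simeq\mathbf{\acute{E}t}(S_{\operatorname{ev}})$ and $\mathbf{\acute{E}t}(S^\prime)\simeq\mathbf{\acute{E}t}(S^\prime_{\operatorname{ev}})$, compatible with pullback.
\item The statement then is literally \cite[Proposition\ A.4]{Ryd10}.
\end{enumerate}

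The fact you single out as your main obstacle is exactly the input of that lemma: \'etale superalgebras have no odd relative coordinates, i.e.\ $B\cong B_0\otimes_{A_0}A$ with $A_0\to B_0$ \'etale. Once you have it, you can stop, because it already identifies the \'etale site of $S$ with that of the ordinary space $S_{\operatorname{ev}}$. Your limit argument, induction on nilpotency order, square-zero lifting and gluing then become unnecessary. The ordinary theorem absorbs locally nil ideals, non-quasi-compact objects and algebraic spaces.

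What your route buys is an intrinsic proof of topological invariance. Its full-faithfulness half needs only formal \'etaleness. The cotangent-complex variant would bypass the structure lemma entirely. The cost is super versions of limit theory and of \'etale descent and gluing for algebraic superspaces, none of which the paper has to develop. Also, your square-zero argument never uses $I\subseteq J$, so the detour through $S_{\operatorname{bos}}$ and the separate appeal to Rydh for the ordinary part are superfluous.

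Your sketch of the structure lemma needs two corrections.
\begin{enumerate}
\item \emph{The formula is ill-defined.} $X\cong X_{\operatorname{bos}}\times_{S_{\operatorname{bos}}}S$ makes no sense, because there is no morphism $S\to S_{\operatorname{bos}}$ unless $S$ is projected, and affine superschemes need not be. The correct statement uses the canonical map $S\to S_{\operatorname{ev}}$: $X\cong X_{\operatorname{ev}}\times_{S_{\operatorname{ev}}}S$.
\item \emph{Unramifiedness is not enough.} Vanishing of the (odd) relative cotangent sheaf does not by itself give this decomposition. For example, $k[\theta]\to k$ is unramified, but $B_0\otimes_{A_0}A=k\otimes_k k[\theta]=k[\theta]\neq k$. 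The argument must use flatness, as in the proof of \cite[Proposition\ A.26]{BHP23}.
\end{enumerate}
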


\begin{proof}
    For a surjective closed immersion of algebraic superspaces $S^\prime\to S$, we get a surjective closed immersion of ordinary algebraic spaces $S_{\operatorname{ev}}^\prime\to S_{\operatorname{ev}}$. By \Cref{lem:ev_commutes_etale_base_change}, we have $\mathbf{\acute{E}t}(S)\cong\mathbf{\acute{E}t}(S_{\operatorname{ev}})$ and $\mathbf{\acute{E}t}(S^\prime)\cong\mathbf{\acute{E}t}(S_{\operatorname{ev}}^\prime)$. It remains to show $\mathbf{\acute{E}t}(S_{\operatorname{ev}})\cong\mathbf{\acute{E}t}(S_{\operatorname{ev}}^\prime)$ and this is precisely \cite[Proposition\ A.4]{Ryd10}.
\end{proof}

Recall that the natural inclusion map $S_{\operatorname{bos}}\to S$ is a surjective closed immersion. \Cref{prop:super_une_equivalence} tells us that $\mathbf{\acute{E}t}(S_{\operatorname{bos}})\cong\mathbf{\acute{E}t}(S)$. Let $P$ be a property of morphisms of superschemes or algebraic superspaces. We say $P$ is stable under bosonic reductions if for every morphism $f\colon X\to S$ of algebraic superspaces, $f$ has $P$ if and only if the induced map $f_{\operatorname{bos}}\colon X_{\operatorname{bos}}\to S_{\operatorname{bos}}$ has $P$. For example, every topological and separation property of morphisms of superspaces, such as quasi-compactness, quasi-separatedness, and separateness, is stable under bosonic reductions. The following result is immediate from \Cref{prop:super_une_equivalence}.

\begin{corollary}\label{cor:super_same_ET}
    Let $S$ be an algebraic superspace. Let $P$ be a property of morphisms of algebraic superspaces that is stable under bosonic reduction. Then the natural functor $\mathbf{\acute{E}t}_P(S)\to\mathbf{\acute{E}t}_P(S_{\operatorname{bos}})$ sending $(X\to S)$ to $(X\times_{S}S_{\operatorname{bos}}\to S_{\operatorname{bos}})$ is an equivalence of categories.
\end{corollary}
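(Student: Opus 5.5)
By \Cref{prop:super_une_equivalence} applied to the surjective closed immersion $S_{\operatorname{bos}}\to S$, the base change functor $\mathbf{\acute{E}t}(S)\to\mathbf{\acute{E}t}(S_{\operatorname{bos}})$, $X\mapsto X\times_S S_{\operatorname{bos}}$, is an equivalence of categories. The plan is to show that this equivalence restricts to an equivalence between the full subcategories $\mathbf{\acute{E}t}_P(S)$ and $\mathbf{\acute{E}t}_P(S_{\operatorname{bos}})$.

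Both $\mathbf{\acute{E}t}_P(S)\subseteq\mathbf{\acute{E}t}(S)$ and $\mathbf{\acute{E}t}_P(S_{\operatorname{bos}})\subseteq\mathbf{\acute{E}t}(S_{\operatorname{bos}})$ are full subcategories, since morphisms are just morphisms over the base. Full faithfulness of the restricted functor is therefore inherited from \Cref{prop:super_une_equivalence}, and it remains to check two things: that the functor carries $\mathbf{\acute{E}t}_P(S)$ into $\mathbf{\acute{E}t}_P(S_{\operatorname{bos}})$, and that every object of $\mathbf{\acute{E}t}_P(S_{\operatorname{bos}})$ is isomorphic to the image of an object of $\mathbf{\acute{E}t}_P(S)$.

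The key observation is that, for an étale morphism $X\to S$, the base change $X_S:=X\times_S S_{\operatorname{bos}}$ has the same bosonic reduction as $X$. Indeed, $X_S\to X$ is a surjective closed immersion, so $X_S$ and $X$ share an underlying topological space. Moreover, $X_S\to S_{\operatorname{bos}}$ is étale over an ordinary algebraic space, so $X_S$ is bosonic: étale-locally it looks like $\operatorname{sSpec}$ of an étale algebra over an even ring, hence has no odd part. Consequently $(X_S)_{\operatorname{bos}}=X_S$. Finally, the map $X_{\operatorname{bos}}\to X$ factors through $X_S$ by the adjunction (bosonic reduction is right adjoint to the inclusion), and the canonical map $(X_S)_{\operatorname{bos}}\to X_{\operatorname{bos}}$ gives the inverse. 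Hence $X_{\operatorname{bos}}\cong X_S$ over $S_{\operatorname{bos}}$.

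Now the hypothesis that $P$ is stable under bosonic reduction does the work. For $X\to S$ étale, $X\to S$ has $P$ if and only if $X_{\operatorname{bos}}\to S_{\operatorname{bos}}$ has $P$, and by the previous paragraph the latter map is $X_S\to S_{\operatorname{bos}}$. This shows the functor preserves $P$ in both directions. For essential surjectivity, take $Y\to S_{\operatorname{bos}}$ in $\mathbf{\acute{E}t}_P(S_{\operatorname{bos}})$. By \Cref{prop:super_une_equivalence} there is an étale $X\to S$ with $X_S\cong Y$, and then $X\to S$ has $P$ by the same equivalence.

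The only real obstacle is the identification $X_{\operatorname{bos}}\cong X\times_S S_{\operatorname{bos}}$ for étale $X\to S$, that is, the claim that étale morphisms create no odd nilpotents. I would check it étale-locally on $X$ and $S$, where it reduces to the affine statement: if $A\to B$ is an étale map of superrings, then $B\otimes_A A/J_A\cong B/J_B$. This holds because an étale map is flat with $B_1=B_0\otimes_{A_0}A_1$, so $J_B=J_AB$.
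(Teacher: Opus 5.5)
Your proposal is correct and is essentially the paper's argument. The paper calls the corollary immediate from \Cref{prop:super_une_equivalence}, implicitly restricting that equivalence to the $P$-subcategories via the Cartesian square $X_{\operatorname{bos}}\cong X\times_S S_{\operatorname{bos}}$ for \'etale $X\to S$, which it records right after the corollary. The identity $B_1=B_0\otimes_{A_0}A_1$ you invoke at the end is precisely the content of \Cref{lem:ev_commutes_etale_base_change}, in the form $B\cong B_0\otimes_{A_0}A$.
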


We now consider the \'etale morphisms of superspaces with descent data. Recall that for an \'etale morphism $f\colon X\to S$, we have the following Cartesian diagram

\begin{equation*}
    \begin{tikzcd}
        X_{\operatorname{bos}}\arrow[r]\arrow[d]\arrow[phantom,"\square",rd] & X\arrow[d]\\
        S_{\operatorname{bos}}\arrow[r] & S.
    \end{tikzcd}
\end{equation*}

\begin{lemma}\label{lem:super_same_ET_descent_datum}
    Let $S$ be an algebraic superspace. Then the assignment
    \begin{align*}
        \mathbf{\acute{E}t}(f\colon S^\prime\to S)&\longrightarrow\mathbf{\acute{E}t}(f_{\operatorname{bos}}\colon S_{\operatorname{bos}}^\prime\to S_{\operatorname{bos}})\\
        (X^\prime\to S^\prime,\varphi)&\longmapsto(X^\prime\times_{S^\prime}S_{\operatorname{bos}}^\prime,\varphi_{\operatorname{bos}})
    \end{align*}
    induces an equivalence of categories. Let $f\colon S^\prime\to S$ be a morphism of algebraic superspaces. If $P$ is a property of morphisms of algebraic superspaces that is stable under bosonic reductions, then the equivalence above restricts to an equivalence between $\mathbf{\acute{E}t}_{P}(f\colon S^\prime\to S)$ and $\mathbf{\acute{E}t}_{P}(f_{\operatorname{bos}}\colon S_{\operatorname{bos}}^\prime\to S_{\operatorname{bos}})$. 
\end{lemma}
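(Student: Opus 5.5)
The plan is to deduce the equivalence from \Cref{prop:super_une_equivalence}, applied to the three surjective closed immersions $S_{\operatorname{bos}}\to S$, $S^\prime_{\operatorname{bos}}\to S^\prime$ and $S^{\prime\prime}_{\operatorname{bos}}\to S^{\prime\prime}$, together with the fact that bosonic reduction commutes with fibre products. Here $S^{\prime\prime}=S^\prime\times_SS^\prime$, and this commutation gives $(S^{\prime\prime})_{\operatorname{bos}}=S^\prime_{\operatorname{bos}}\times_{S_{\operatorname{bos}}}S^\prime_{\operatorname{bos}}$. The triple fibre product $S^{\prime\prime\prime}$ is handled the same way. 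Thus the \v{C}ech nerve of $f_{\operatorname{bos}}$ is exactly the bosonic reduction of the \v{C}ech nerve of $f$, levelwise. Moreover, each level of the bosonic nerve maps to the corresponding level of the original nerve by a surjective closed immersion.

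First, I check that the assignment is well defined. An object $(X^\prime\to S^\prime,\varphi)$ consists of an étale $X^\prime\to S^\prime$ and an isomorphism $\varphi\colon p_1^*X^\prime\cong p_2^*X^\prime$ over $S^{\prime\prime}$. Base changing along $S^{\prime}_{\operatorname{bos}}\to S^\prime$ gives an étale morphism $X^\prime\times_{S^\prime}S^\prime_{\operatorname{bos}}\to S^\prime_{\operatorname{bos}}$. Base changing $\varphi$ along $S^{\prime\prime}_{\operatorname{bos}}\to S^{\prime\prime}$ gives $\varphi_{\operatorname{bos}}$. Since the projections of the bosonic nerve are compatible with those of the original nerve, $p_i^*$ commutes with this base change. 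The cocycle condition is preserved by pulling back to $S^{\prime\prime\prime}_{\operatorname{bos}}$, and the construction is functorial.

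Second, I would show full faithfulness and essential surjectivity. A morphism in $\mathbf{\acute{E}t}(f)$ is a morphism in $\mathbf{\acute{E}t}(S^\prime)$ compatible with the descent data. By \Cref{prop:super_une_equivalence} for $S^\prime_{\operatorname{bos}}\to S^\prime$, morphisms over $S^\prime$ correspond bijectively to morphisms over $S^\prime_{\operatorname{bos}}$. Compatibility with the descent data is an equality of two morphisms in $\mathbf{\acute{E}t}(S^{\prime\prime})$, and by the same proposition over $S^{\prime\prime}$ it can be tested after reduction; this gives full faithfulness. For essential surjectivity, start from $(Y\to S^\prime_{\operatorname{bos}},\psi)$. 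By \Cref{prop:super_une_equivalence}, $Y$ lifts to an étale $X^\prime\to S^\prime$ with $X^\prime\times_{S^\prime}S^\prime_{\operatorname{bos}}\cong Y$. The isomorphism $\psi$ is an isomorphism between the reductions of $p_1^*X^\prime$ and $p_2^*X^\prime$ in $\mathbf{\acute{E}t}(S^{\prime\prime}_{\operatorname{bos}})$. By the equivalence over $S^{\prime\prime}$ it lifts uniquely to some $\varphi$. The cocycle condition for $\varphi$ is an equality in $\mathbf{\acute{E}t}(S^{\prime\prime\prime})$, and it holds because it holds after reduction and that functor is faithful.

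Finally, for a property $P$ stable under bosonic reductions, the restriction follows at once. The object $X^\prime\to S^\prime$ has $P$ exactly when its reduction $X^\prime_{\operatorname{bos}}\to S^\prime_{\operatorname{bos}}$ does. For étale $X^\prime$ we have $X^\prime_{\operatorname{bos}}=X^\prime\times_{S^\prime}S^\prime_{\operatorname{bos}}$ by the Cartesian square above, so both functors preserve and reflect $P$, as in \Cref{cor:super_same_ET}.

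The main obstacle is justifying the identification of the \v{C}ech nerves. One needs that $(S^\prime\times_SS^\prime)_{\operatorname{bos}}$ agrees with $S^\prime_{\operatorname{bos}}\times_{S_{\operatorname{bos}}}S^\prime_{\operatorname{bos}}$ as algebraic superspaces, not just topologically. I would argue this from the right adjointness of bosonic reduction to the inclusion of ordinary spaces, which formally gives preservation of limits. If that is delicate for superspaces, the proof only actually needs the map $S^{\prime\prime}_{\operatorname{bos}}\to S^{\prime\prime}$ to be a surjective closed immersion, and that clearly holds.
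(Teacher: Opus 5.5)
Your proposal is correct and follows essentially the same route as the paper: use \Cref{prop:super_une_equivalence} (equivalently \Cref{cor:super_same_ET}) to lift the étale object over $S^{\prime}$ and the descent isomorphism over $S^{\prime\prime}$, relying on the fact that bosonic reduction commutes with fibre products. Your explicit treatment of full faithfulness, and of the cocycle condition via faithfulness over $S^{\prime\prime\prime}$, spells out what the paper leaves as ``readily checked.'' Your fallback is also valid: $S^{\prime}_{\operatorname{bos}}\times_{S_{\operatorname{bos}}}S^{\prime}_{\operatorname{bos}}\to S^{\prime}\times_S S^{\prime}$ is a surjective closed immersion whether or not it is identified with $(S^{\prime\prime})_{\operatorname{bos}}$.
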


\begin{proof}
    Let $(X^\prime\to S^\prime, \varphi)\in\mathbf{\acute{E}t}(f\colon S^\prime\to S)$. We know that $X_{\operatorname{bos}}^\prime\to S_{\operatorname{bos}}^\prime$ is also \'etale and$$\varphi_{\operatorname{bos}}\colon X_{\operatorname{bos}}^\prime\times_{S_{\operatorname{bos}}^\prime}S_{\operatorname{bos}}^{\prime\prime}\cong X_{\operatorname{bos}}^\prime\times_{S_{\operatorname{bos}}^\prime}S_{\operatorname{bos}}^{\prime\prime}$$is an isomorphism. Since bosonic reductions commute with base change, we see that $\varphi_{\operatorname{bos}}$ satisfies the cocycle conditions. Conversely, let $(Y^\prime\to S_{\operatorname{bos}}^\prime, \phi)$ be an object in $\mathbf{\acute{E}t}(f_{\operatorname{bos}}\colon S_{\operatorname{bos}}^\prime\to S_{\operatorname{bos}})$. By \cref{cor:super_same_ET}, there is a unique \'etale morphism $X^\prime\to S^\prime$ such that $Y^\prime\cong X^\prime\times_{S^\prime}S_{\operatorname{bos}}^\prime$. Similarly, the isomorphism$$\phi^\prime\colon X_{\operatorname{bos}}^\prime\times_{S_{\operatorname{bos}}^\prime}S_{\operatorname{bos}}^{\prime\prime}\longrightarrow X_{\operatorname{bos}}^\prime\times_{S_{\operatorname{bos}}^\prime}S_{\operatorname{bos}}^{\prime\prime}$$over $S_{\operatorname{bos}}^{\prime\prime}\cong S_{\operatorname{bos}}^\prime\times_{S_{\operatorname{bos}}}S_{\operatorname{bos}}^\prime$ lifts to a unique isomorphism$$\phi\colon X^\prime\times_{S^\prime}S^{\prime\prime}\longrightarrow X^\prime\times_{S^\prime}S^{\prime\prime}$$over $S^{\prime\prime}$ satisfying the cocycle condition. One can readily check that these two functors are quasi-inverses of each other, and the result follows.
\end{proof}

Now we are ready to prove the main theorem of this section. 

\begin{theorem}[{cf.\ \cite[Theorem A.2(1a)]{Ryd13}}]\label{thm:effective_descent_integral_sep}
    Let $f\colon S^\prime\to S$ be an integral surjective morphism of algebraic superspaces. Then $f$ is of universal effective $\mathbf{\acute{E}t}_{\text{sep}}$--descent. 
\end{theorem}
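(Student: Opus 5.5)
The plan is to reduce to the ordinary statement \cite[Corollary 5.15]{Ryd10} (equivalently \cite[Theorem A.2(1a)]{Ryd13}) by passing to bosonic reductions, using \Cref{cor:super_same_ET} and \Cref{lem:super_same_ET_descent_datum} with $P$ the property of being separated, which is stable under bosonic reduction.

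\textbf{Universality.} First I would check that it suffices to prove that $f$ is of effective $\mathbf{\acute{E}t}_{\text{sep}}$-descent. For any morphism $T\to S$ of algebraic superspaces, the base change $f_T\colon S^\prime\times_ST\to T$ is again integral and surjective. Integrality is preserved because, by \Cref{lem:integral_is_affine+universally_closed} and \Cref{lem:integral_morphism_etale_local}, it amounts to being affine and universally closed, and both are stable under base change. Surjectivity is a topological property stable under base change, as in the ordinary case, since points of fibre products surject onto pairs of points.

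\textbf{Reduction to the bosonic case.} Fix an integral surjective $f\colon S^\prime\to S$ and consider the commutative square
\begin{equation*}
    \begin{tikzcd}
        \mathbf{\acute{E}t}_{\text{sep}}(S)\arrow[r,"f^*"]\arrow[d] & \mathbf{\acute{E}t}_{\text{sep}}(f\colon S^\prime\to S)\arrow[d]\\
        \mathbf{\acute{E}t}_{\text{sep}}(S_{\operatorname{bos}})\arrow[r,"f_{\operatorname{bos}}^*"] & \mathbf{\acute{E}t}_{\text{sep}}(f_{\operatorname{bos}}\colon S^\prime_{\operatorname{bos}}\to S_{\operatorname{bos}}).
    \end{tikzcd}
\end{equation*}
Here the vertical functors are pullback to bosonic reductions. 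The left vertical functor is an equivalence by \Cref{cor:super_same_ET}, and the right one by \Cref{lem:super_same_ET_descent_datum}. The square commutes up to natural isomorphism because bosonic reduction commutes with fibre products. Concretely, $(X\times_SS^\prime)\times_{S^\prime}S^\prime_{\operatorname{bos}}\cong X_{\operatorname{bos}}\times_{S_{\operatorname{bos}}}S^\prime_{\operatorname{bos}}$, and the trivial descent datum restricts to the trivial one. It follows that $f^*$ is an equivalence if and only if $f_{\operatorname{bos}}^*$ is.

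\textbf{The ordinary case.} It remains to verify that $f_{\operatorname{bos}}\colon S^\prime_{\operatorname{bos}}\to S_{\operatorname{bos}}$ is an integral surjective morphism of ordinary algebraic spaces. Integrality was noted after \Cref{lem:integral_morphism_etale_local}. Surjectivity holds because $|S_{\operatorname{bos}}|=|S|$ and $|S^\prime_{\operatorname{bos}}|=|S^\prime|$. Rydh's result \cite[Corollary 5.15]{Ryd10} then says $f_{\operatorname{bos}}$ is of effective $\mathbf{\acute{E}t}_{\text{sep}}$-descent, which finishes the proof.

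\textbf{Expected obstacle.} The main care point is the commutativity of the square at the level of descent data. I need that, under the identification $S^{\prime\prime}_{\operatorname{bos}}\cong S^\prime_{\operatorname{bos}}\times_{S_{\operatorname{bos}}}S^\prime_{\operatorname{bos}}$, the correspondence of \Cref{lem:super_same_ET_descent_datum} sends pulled-back trivial data to pulled-back trivial data. I would also confirm that separatedness is genuinely preserved in both directions. I expect both to be formal consequences of the fact that bosonic reduction is a right adjoint and that $X_{\operatorname{bos}}\to X$ is a universal homeomorphism.
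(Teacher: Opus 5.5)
Your proposal is correct and follows essentially the same route as the paper. Both arguments use the stability of integral surjective morphisms under base change, then the equivalences $\mathbf{\acute{E}t}_{\text{sep}}(S)\simeq\mathbf{\acute{E}t}_{\text{sep}}(S_{\operatorname{bos}})$ and $\mathbf{\acute{E}t}_{\text{sep}}(f)\simeq\mathbf{\acute{E}t}_{\text{sep}}(f_{\operatorname{bos}})$ from \Cref{prop:super_une_equivalence} (via \Cref{cor:super_same_ET}) and \Cref{lem:super_same_ET_descent_datum}, and finally Rydh's ordinary result applied to the integral surjective $f_{\operatorname{bos}}$. Your explicit check that these equivalences intertwine $f^*$ with $f_{\operatorname{bos}}^*$ is left implicit in the paper, so it is a welcome addition rather than a different route.
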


\begin{proof}
    Since $f\colon S^\prime\to S$ is integral and surjective, it remains so after arbitrary base change. It suffices to show $f$ is of effective $\mathbf{\acute{E}t}_{\text{sep}}$--descent. We show that $\mathbf{\acute{E}t}_{\text{sep}}(S)\cong\mathbf{\acute{E}t}_{\text{sep}}(f\colon S^\prime\to S)$. By \Cref{prop:super_une_equivalence}, we have $\mathbf{\acute{E}t}_{\text{sep}}(S)\cong\mathbf{\acute{E}t}_{\text{sep}}(S_{\operatorname{bos}})$. By \Cref{lem:super_same_ET_descent_datum}, we have$$\mathbf{\acute{E}t}_{\text{sep}}(f\colon S^\prime\to S)\cong\mathbf{\acute{E}t}_{\text{sep}}(f_{\operatorname{bos}}\colon S_{\operatorname{bos}}^\prime\to S_{\operatorname{bos}}).$$It suffices to show$$\mathbf{\acute{E}t}_{\text{sep}}(S_{\operatorname{bos}})\cong\mathbf{\acute{E}t}_{\text{sep}}(f_{\operatorname{bos}}\colon S_{\operatorname{bos}}^\prime\to S_{\operatorname{bos}}).$$Note that $f_{\operatorname{bos}}\colon S_{\operatorname{bos}}^\prime\to S_{\operatorname{bos}}$ is an integral surjective morphism of algebraic spaces. The result then follows from the ordinary case \cite[Theorem A.2(1a)]{Ryd13}. 
\end{proof}

\subsection{The descent condition for quotients by supergroupoids}\label{sec:descent_condition}

We establish some results on the descent condition for quotients by supergroupoids. As a consequence, we show that integral strongly geometric quotients of algebraic superspaces are categorical. In the ordinary case, these results are due to Rydh in \cite[Section\ 3]{Ryd13}, which we follow closely. We begin with some terminology. Recall from \cite[Definition 3.2]{Ryd13} that a morphism $f\colon (R_W,W)\to(R_X,X)$ of supergroupoids is \textit{square} if the following diagrams
\begin{equation*}
        \begin{tikzcd}
            R_W\arrow[r,"f"]\arrow[d,"s"] & R_X\arrow[d,"s"] & R_W\arrow[r,"f"]\arrow[d,"t"] & R_X\arrow[d,"t"]\\
            W\arrow[r,"f"] & X & W\arrow[r,"f"] & X
        \end{tikzcd}
    \end{equation*}
are Cartesian.
\begin{definition}[{cf.\ \cite[Definition 3.3]{Ryd13}}]\label{def:descent_condition_supergroupoids}
    Let $(R_{X}, X)$ be a supergroupoid and let $q\colon X\to Z_{X}$ be a geometric quotient. Let $P$ be a property of morphisms of algebraic superspaces. We say the quotient $q\colon X\to Z_{X}$ satisfies the descent condition for $\mathbf{\acute{E}t}_{P}$ if for every square, stabilizer-preserving morphism $f\colon (R_{W},W)\to(R_{X},X)$ of supergroupoids such that $(f\colon W\to X)\in\mathbf{\acute{E}t}_{P}(X)$, there exists an algebraic superspace $Z_{W}$ and a Cartesian diagram
    \begin{equation}\label{diag:descent_condition_def}
        \begin{tikzcd}
            W\arrow[r]\arrow[d]\arrow[rd, phantom,"\square"] & X\arrow[d]\\
            Z_{W}\arrow[r] & Z_{X}
        \end{tikzcd}
    \end{equation}
    such that $(Z_{W}\to Z_{X})\in\mathbf{\acute{E}t}_{P}(Z_{X})$. We say that $q\colon X\to Z_{X}$ satisfies the descent condition for $\mathbf{\acute{E}t}_{P}$ uniformly if every flat base change of $q$ satisfies the descent condition for $\mathbf{\acute{E}t}_{P}$.
\end{definition}

Note that the morphism $W\to Z_{W}$ in the diagram above is also a geometric quotient. This is because geometric quotients are compatible with flat base change. The following observation tells us that strongly geometric quotients satisfying the descent conditions are automatically categorical.

\begin{lemma}[{cf.\ \cite[Proposition\ 3.8]{Ryd13}}]\label{lem:geometric_quotients_descent_categorical}
    Let $(R_{X}, X)$ be a supergroupoid and let $q\colon X\to Z_{X}$ be a geometric quotient satisfying the descent condition for $\mathbf{\acute{E}t}_{\operatorname{sep}}$. Then $q$ is a categorical quotient.
\end{lemma}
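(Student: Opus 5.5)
Following Rydh's proof of \cite[Proposition\ 3.8]{Ryd13}, I will reduce the universal property to the case where the target is affine, and then glue along an \'etale cover. Let $q^\prime\colon X\to Y$ be an equivariant morphism to an algebraic superspace. Uniqueness is the easy part: since $q$ is a geometric quotient it is in particular a topological quotient and $\mathcal{O}_{Z_X}\cong(q_*\mathcal{O}_X)^{R_X}$. Hence $q$ is an epimorphism in the category of superringed spaces, and by \'etale descent it is also an epimorphism in algebraic superspaces. So two factorizations $Z_X\to Y$ agree, first on the underlying topological spaces and then on structure sheaves. Because of this uniqueness, any factorizations constructed \'etale-locally on $Z_X$ automatically agree on overlaps and glue.

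For existence, I first treat the case where $Y$ is an affine superscheme $\operatorname{sSpec}(B)$. Then $q^\prime$ corresponds to a superring map $B\to\Gamma(X,\mathcal{O}_X)$, and equivariance means that its image lies in the $R_X$-invariants $\Gamma(X,\mathcal{O}_X)^{R_X}=\Gamma(Z_X,\mathcal{O}_{Z_X})$. This gives the required map $Z_X\to\operatorname{sSpec}(B)$, and it is compatible with $q^\prime$.

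For general $Y$, choose an \'etale surjection $Y^\prime=\coprod Y_i\to Y$ with each $Y_i$ affine, and set $W=X\times_YY^\prime$. The morphism $W\to X$ is \'etale, but it need not be separated, so I will refine the cover. Each $Y_i\to Y$ is separated and \'etale, because its source is affine and the diagonal of $Y$ is representable. Hence each $W_i=X\times_YY_i\to X$ lies in $\mathbf{\acute{E}t}_{\operatorname{sep}}(X)$. Set $R_{W_i}=R_X\times_{s,X}W_i$. Equivariance of $q^\prime$ gives a canonical identification of this with $W_i\times_{X,t}R_X$, which makes $(R_{W_i},W_i)\to(R_X,X)$ a square morphism of supergroupoids. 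It is stabilizer-preserving because stabilizers of points of $W_i$ are computed as fibres of $R_{W_i}\to W_i\times W_i$ over the diagonal, and these fibres coincide with the stabilizers in $R_X$.

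The descent condition then produces $Z_{W_i}\to Z_X$ in $\mathbf{\acute{E}t}_{\operatorname{sep}}(Z_X)$ together with a Cartesian square as in \eqref{diag:descent_condition_def}. By flat base change (\Cref{prop:geometric_quotient_uniform_local}), $W_i\to Z_{W_i}$ is again a geometric quotient. Applying the affine case to the equivariant map $W_i\to Y_i$ gives morphisms $Z_{W_i}\to Y_i\to Y$.

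The main obstacle is gluing these local maps. The images of the $Z_{W_i}$ cover $Z_X$ because $W=\coprod W_i\to X$ is surjective and the square is Cartesian, so $Z_W=\coprod Z_{W_i}\to Z_X$ is an \'etale surjection. I then have to check that the two composites $Z_W\times_{Z_X}Z_W\rightrightarrows Z_W\to Y$ agree. The fibre product $W\times_XW\cong X\times_Z(Z_W\times_{Z_X}Z_W)$ is an \'etale, not necessarily affine, cover of $X$, and over it the two maps to $Y$ agree because both restrict to $q^\prime$. Moreover $W\times_XW\to Z_W\times_{Z_X}Z_W$ is a geometric quotient by flat base change. So the uniqueness argument from the first step, which holds for arbitrary target superspaces $Y$, shows that the two maps coincide. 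Finally, by \'etale descent of morphisms into the sheaf $Y$, the map $Z_W\to Y$ descends to the required $\phi\colon Z_X\to Y$ with $q^\prime=\phi\circ q$.
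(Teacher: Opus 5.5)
Your proposal is correct and follows essentially the same route as the paper. You settle the affine-target case via global sections and $R_X$-invariants, pull back an affine (hence separated) \'etale cover of the target, invoke the descent condition to get $Z_{W_i}\to Z_X$, and glue by \'etale descent. You are also more careful than the paper: you verify that $(R_{W_i},W_i)\to(R_X,X)$ is square and stabilizer-preserving, you use $\coprod Y_i$ instead of the quasi-compact reduction, and you check agreement on $Z_W\times_{Z_X}Z_W$.

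The one step stated too quickly is that a geometric quotient is an epimorphism against algebraic superspace targets. After reducing to $Z$ a superscheme, $Y$ is still not a superringed space, so ``\'etale descent'' alone does not suffice. It does follow once you compare the two geometric quotients $Z\times_{f,Y}Y'$ and $Z\times_{g,Y}Y'$ of the same $X\times_{fq,Y}Y'$, which are superschemes and hence canonically isomorphic as superringed spaces. The paper's ``unique morphism by \'etale descent'' leaves this point implicit as well.
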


\begin{proof}
    Let $W$ be an algebraic superspace with an equivariant morphism $r\colon X\to W$. Then we need to produce a unique morphism $f\colon Z_{X}\to W$ such that $r=f\circ q$. First of all, we may assume $W$ is quasi-compact. This is because geometric quotients commute with open immersions. Therefore, we may choose a separated and \'etale presentation $W^\prime\to W$ where $W^\prime$ is an affine superscheme. Let $X^\prime=X\times_WW^\prime$. Note that the projection $X^\prime\to X$ is separated and \'etale by construction. Since $q\colon X\to Z_X$ satisfies the descent condition for $\mathbf{\acute{E}t}_{\operatorname{sep}}$, we get another geometric quotient $q^\prime\colon X^\prime\to Z_{X^\prime}$ with the following Cartesian diagram
    \begin{equation}\label{diag:descent_condition_categorical}
        \begin{tikzcd}
            X^\prime\arrow[r]\arrow[d]\arrow[rd,phantom,"\square"] & X\arrow[d]\\
            Z_{X^\prime}\arrow[r] & Z_{X}.
        \end{tikzcd}
    \end{equation}
    Since $W^\prime$ is affine, a morphism $X^\prime\to W^\prime$ is uniquely determined by a map between the global sections $\Gamma(W^\prime)\to\Gamma(X^\prime)$. Moreover, if the morphism $X^\prime\to W^\prime$ is equivariant, then the map $\Gamma(W^\prime)\to\Gamma(X^\prime)$ factors through the subsuperring of invariants of the $R_{X^\prime}$-action, which is precisely $\Gamma(Z_{X^\prime})$. This in turn induces a unique morphism $f^\prime\colon Z_{X^\prime}\to W^\prime$ such that $r^\prime=f^\prime\circ q^\prime$. By \'etale descent, we obtain a unique morphism $f\colon Z_X\to W$ with the desired property.
\end{proof}

For the rest of this section, we will show that if $q\colon X\to Z_{X}$, as a morphism of superspaces, is of effective descent for separated and \'etale morphisms, then it satisfies the descent condition for $\mathbf{\acute{E}t}_{\operatorname{sep}}$ as a quotient. Let $f\colon(R_{W}, W)\to(R_{X}, X)$ be an \'etale, separated and square morphism of supergroupoids. Let $q\colon X\to Z_{X}$ be a geometric quotient. In this case, an effective descent datum associated to $f\colon W\to X$ is an isomorphism $\varphi\colon W\times_{Z_X}X\to X\times_{Z_X}W$ over $X\times_{Z_X}X$ satisfying the cocycle condition. Consider the following diagram 
\begin{equation}\label{diag:descent_condition}
    \begin{tikzcd}
        & W\times_{Z_X}X\arrow[rd,"f\times id_X"]\arrow[dd,dotted,"\varphi"] & \\
        R_{W}\arrow[ru,"{(s,f\circ t)}"]\arrow[rd,"{(f\circ s, t)}"] & & X\times_{Z_X}X.\\
        & X\times_{Z_X}W\arrow[ru,"id_X\times f"] &
    \end{tikzcd}
\end{equation}

Since $q$ is a geometric quotient, the maps on the left-hand side are both surjective. If $\varphi$ is an isomorphism that fits into the dotted arrow in (\ref{diag:descent_condition}) such that the right-hand side commutes, then its graph $\Gamma_\varphi$ is an open subset of $W\times_{Z_X}W$. This is because $f\colon W\to X$ is \'etale. On the other hand, let $\Gamma_{W/Z_{X}}$ be the image of $j_{W/Z_{X}}=(s,t)\colon R_W\to W\times_{Z_X}W$. The left-hand side of (\ref{diag:descent_condition}) commutes if and only if $\Gamma_{W/Z_{X}}=\Gamma_\varphi$. It is clear that $\Gamma_{W/Z_{X}}\subseteq\Gamma_\varphi$ is equivalent to the commutativity, and the other inclusion follows from the surjectivity of $(s,f\circ t)$. This implies that there is at most one isomorphism $\varphi$ such that (\ref{diag:descent_condition}) commutes. We have essentially shown the following correspondence.

\begin{lemma}\label{lem:quotient_descent_datum_correspondence}
    Let $f\colon(R_{W}, W)\to(R_{X}, X)$ be an \'etale, separated and square morphism of supergroupoids. Let $q\colon X\to Z_{X}$ be a geometric quotient that is of effective $\mathbf{\acute{E}t}_{\operatorname{sep}}$-descent. Then there is a one-to-one correspondence between geometric quotients $W\to Z_W$ with an \'etale separated morphism $Z_W\to Z_X$ such that the diagram (\ref{diag:descent_condition_def}) is Cartesian and effective descent data $\varphi$ making (\ref{diag:descent_condition}) commute. Moreover, there is at most one such geometric quotient $W\to Z_W$ for a fixed effective descent datum $\varphi$.
\end{lemma}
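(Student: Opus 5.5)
We construct the two directions of the correspondence and check that they are mutually inverse; the uniqueness clause comes from the graph argument given just before the statement.

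\emph{From quotients to descent data.} Suppose we are given a geometric quotient $W\to Z_W$ with $Z_W\to Z_X$ étale and separated, and with (\ref{diag:descent_condition_def}) Cartesian. Then $W\cong X\times_{Z_X}Z_W$, so $W\to X$ carries the canonical descent datum relative to $q$:
\[
\varphi\colon W\times_{Z_X}X\cong Z_W\times_{Z_X}X\times_{Z_X}X\cong X\times_{Z_X}W .
\]
This datum is effective by construction and satisfies the cocycle condition. To see that (\ref{diag:descent_condition}) commutes, note first that the right-hand triangle commutes because $\varphi$ lies over $X\times_{Z_X}X$. For the left-hand side, it suffices to show that the two maps $R_W\to W\times_{Z_X}X$ and $R_W\to X\times_{Z_X}W$ agree after composing into $Z_W\times_{Z_X}X\times_{Z_X}X$. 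On the $X$-factors they are $f\circ s$ and $f\circ t$ in both cases. On the $Z_W$-factor they are the compositions of $s$ and of $t$ with $W\to Z_W$, and these agree because $W\to Z_W$ is equivariant.

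\emph{From descent data to quotients.} Conversely, let $\varphi$ be an effective descent datum making (\ref{diag:descent_condition}) commute. Effectivity, together with the assumption that $q$ is of effective $\mathbf{\acute{E}t}_{\operatorname{sep}}$-descent, gives a separated étale morphism $Z_W\to Z_X$ and an isomorphism $W\cong X\times_{Z_X}Z_W$ inducing $\varphi$. So (\ref{diag:descent_condition_def}) is Cartesian. Since $Z_W\to Z_X$ is flat, \Cref{prop:geometric_quotient_uniform_local}\ref{prop:geometric_quotient_uniform_local1} shows that the base change $W\to Z_W$ is a geometric quotient for the pulled-back groupoid $R_X\times_{Z_X}Z_W$.

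The remaining issue is that we need a geometric quotient for $R_W$, not for $R_X\times_{Z_X}Z_W$. We therefore identify the two groupoids. Because $f$ is square, $R_W\cong R_X\times_{s,X}W$. Commutativity of the left-hand side of (\ref{diag:descent_condition}) says that the map $R_W\to X\times_{Z_X}W$ coincides with the map induced through $\varphi$; unwinding this shows that $R_W\to R_X\times_{Z_X}Z_W$ is an isomorphism compatible with $s$ and $t$. With this identification, $W\to Z_W$ is a geometric quotient of $R_W\rightrightarrows W$. We expect this step to be the main obstacle: the bookkeeping with the square condition and the descent isomorphism has to be done carefully. Stabilizer preservation is not needed here.

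\emph{Inverse bijection and uniqueness.} Full faithfulness of $q^*$ on $\mathbf{\acute{E}t}_{\operatorname{sep}}$ makes the two constructions mutually inverse, up to unique isomorphism. For the final clause, fix $\varphi$; then $Z_W$ is determined up to unique isomorphism by the equivalence of categories. Moreover, the discussion preceding the statement shows that there is at most one $\varphi$ making (\ref{diag:descent_condition}) commute: its graph must equal $\Gamma_{W/Z_X}$, and $\Gamma_{W/Z_X}$ determines $\varphi$ because $\Gamma_\varphi$ is open and is the graph of a morphism.
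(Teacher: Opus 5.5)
Your proposal is correct and follows essentially the same route as the paper. You use effective $\mathbf{\acute{E}t}_{\operatorname{sep}}$-descent plus flat base change in one direction, and the canonical datum $W\times_{Z_X}X\cong Z_W\times_{Z_X}X\times_{Z_X}X\cong X\times_{Z_X}W$ in the other, with uniqueness coming from the graph discussion. Your explicit identification of $R_W$ with $R_X\times_{Z_X}Z_W$ supplies a step the paper leaves implicit when it says ``it follows that $W\to Z_W$ is also a geometric quotient.'' That identification uses the source square together with the equivariance $q_W\circ s=q_W\circ t$ encoded by the left triangle.
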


\begin{proof}
    Given an effective descent datum, we get a unique Cartesian square as in (\ref{diag:descent_condition_def}) where $Z_W\to Z_X$ is separated and \'etale since $q$ is of effective $\mathbf{\acute{E}t}_{\operatorname{sep}}$-descent. It follows that $W\to Z_W$ is also a geometric quotient. For the other direction, we start with a geometric quotient $W\to Z_{W}$ with a Cartesian square as in (\ref{diag:descent_condition_def}) where $Z_W\to Z_X$ is separated and \'etale. In this case, we have an isomorphism $W\times_{Z_X}X\cong W\times_{Z_W}W\cong X\times_{Z_X}W$ over $X\times_{Z_X}X$ satisfying the cocycle condition and making (\ref{diag:descent_condition}) commute. The last claim follows from the discussion above.
\end{proof}

\begin{lemma}[{cf.\ \cite[Proposition\ 3.12]{Ryd13}}]\label{lem:super_Rydh_3.12}
    In the situation of \Cref{lem:quotient_descent_datum_correspondence}, assume further that $f$ is stabilizer-preserving. Let $\Gamma_{W/Z_{X}}$ be the image of $j_{W/Z_{X}}=(s,t)\colon R_W\to W\times_{Z_X}W$. Then the following holds:
    \begin{enumerate}[topsep=0pt,noitemsep,label=\normalfont(\arabic*)]
        \item\label{super_Rydh_3.12(1)} the morphisms $f\times_{Z_X}id_{W}\colon W\times_{Z_X}W\to X\times_{Z_X}W$ and $id_{W}\times_{Z_X}f\colon W\times_{Z_X}W\to W\times_{Z_X}X$ are universally bijective when restricted to $\Gamma_{W/Z_{X}}$;
        \item\label{super_Rydh_3.12(2)} the subset $\Gamma_{W/Z_{X}}$ is open in $W\times_{Z_X}W$ if and only if $f\times_{Z_X}id_{W}$ is universally submersive when restricted to $\Gamma_{W/Z_{X}}$; and
        \item\label{super_Rydh_3.12(3)} there exists an isomorphism $\varphi$ making (\ref{diag:descent_condition}) commute if and only if $\Gamma_{W/Z_{X}}$ is open and $\Gamma_\varphi=\Gamma_{W/Z_{X}}$.
    \end{enumerate}
\end{lemma}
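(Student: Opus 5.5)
The plan is to reduce everything to topological statements, where Rydh's argument for \cite[Proposition\ 3.12]{Ryd13} applies almost verbatim. Underlying topological spaces, surjectivity, bijectivity and submersiveness of morphisms of algebraic superspaces are all detected on bosonic reductions. Bosonic reduction commutes with fibre products, and it preserves squareness, the stabilizer-preserving property on geometric points, and geometric quotients at the level of points. So one route is to pass to the ordinary groupoids $(R_{W,\operatorname{bos}},W_{\operatorname{bos}})\to(R_{X,\operatorname{bos}},X_{\operatorname{bos}})$ and quote Rydh. I would nevertheless write the argument directly, since the quotient condition $\mathcal{O}_{Z_X}\cong(q_*\mathcal{O}_X)^{R_X}$ is not obviously preserved under bosonic reduction. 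Only the topological part is needed anyway, and that is universal by \Cref{def:quotients}.

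For \ref{super_Rydh_3.12(1)}, I would argue on geometric points after an arbitrary base change $T\to X\times_{Z_X}W$. Surjectivity of the restriction of $f\times id_W$ to $\Gamma_{W/Z_X}$ onto $X\times_{Z_X}W$ comes from squareness. A geometric point $(x,w)$ of $X\times_{Z_X}W$ lies over a common point of $Z_X$, so $x$ and $f(w)$ lie in the same orbit, because $q$ is a universal topological quotient. Hence there is $r\in R_X$ with $s(r)=f(w)$, and since the square for $s$ is Cartesian, $r$ lifts to $r'\in R_W$ with $s(r')=w$ and $f(t(r'))=x$.

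For injectivity, suppose two points of $\Gamma_{W/Z_X}$, the images of $r_1,r_2\in R_W$, have the same image $(x,w)$. Then $r_2^{-1}r_1$ is defined, with source $s(r_1)$ and target $s(r_2)$, and after modifying by elements we may assume $s(r_1)=s(r_2)=w$. In that case $f(r_1)f(r_2)^{-1}$ lies in $\operatorname{Stab}(f(t(r_2)))$. Stabilizer-preservation identifies this element with an element of $\operatorname{Stab}(t(r_2))$, which forces $t(r_1)=t(r_2)$. This is Rydh's argument, and it works on geometric points, where only the bosonic data matter. The claim for $id_W\times f$ is symmetric, and universality holds because all the hypotheses are stable under base change.

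For \ref{super_Rydh_3.12(2)}, the map $f\times id_W$ is \'etale, hence universally open. If $\Gamma_{W/Z_X}$ is open, the restriction is universally open, and a universally open, universally bijective map is universally submersive, indeed a universal homeomorphism. Conversely, suppose the restriction $\Gamma_{W/Z_X}\to X\times_{Z_X}W$ is universally submersive. Combined with universal bijectivity from \ref{super_Rydh_3.12(1)}, it is then a universal homeomorphism. Using the \'etale section $\Gamma_{\mathrm{id}}$-type argument of Rydh, the graph of the inverse is locally closed and open in the \'etale map's fibres, and one deduces that $\Gamma_{W/Z_X}$ is open.

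For \ref{super_Rydh_3.12(3)}, the discussion before \Cref{lem:quotient_descent_datum_correspondence} already shows that if $\varphi$ exists then $\Gamma_\varphi=\Gamma_{W/Z_X}$, and $\Gamma_\varphi$ is open because $f$ is \'etale. Conversely, assume $\Gamma_{W/Z_X}$ is open, and give it the open subsuperspace structure. By \ref{super_Rydh_3.12(1)} and \ref{super_Rydh_3.12(2)}, both projections from it to $X\times_{Z_X}W$ and to $W\times_{Z_X}X$ are \'etale universal homeomorphisms, hence isomorphisms, using \Cref{prop:super_une_equivalence} or the fact that an \'etale universally injective surjection is an isomorphism. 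Composing one with the inverse of the other defines $\varphi$. The diagram (\ref{diag:descent_condition}) commutes by construction, and the cocycle condition is checked on the corresponding open subsets of triple products via $c$.

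I expect the main obstacle to be the converse direction of \ref{super_Rydh_3.12(2)}, namely deriving openness from submersiveness. Here I would follow \cite[Proposition\ 3.12]{Ryd13} line by line, noting that it only uses the underlying topology.
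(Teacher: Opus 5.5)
Your proposal is correct and follows essentially the same route as the paper. For (1) you use a geometric-point argument combining surjectivity of $R_W\to X\times_{Z_X}W$, squareness, and stabilizer-preservation; your explicit $g r_2=r_1$ manipulation is the paper's identification of the fibres of $R_W$ over $(x,w)$ and $(w',w)$ with $\operatorname{Stab}(x)\cong\operatorname{Stab}(w')$. For (2) you use unramifiedness of $f\times_{Z_X}id_W$ together with the proof of \cite[Lemma 3.10]{Ryd13}, and for (3) you use ``\'etale and universally bijective implies isomorphism'' plus the composition law for the cocycle condition, all as in the paper.

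Your sketch of the converse in (2) is vaguer than it should be. Openness in the fibres alone does not give openness. The real input is that the diagonal of the unramified map is open, combined with universal submersiveness after base change along $\Gamma_{W/Z_X}$ itself. The paper, however, also just defers to Rydh at this point.
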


\begin{proof}
    We follow the argument given in \cite[Proposition\ 3.12]{Ryd13}. For \ref{super_Rydh_3.12(1)}, it suffices to prove the statement for one of the morphisms. Let $(x,w)\colon\operatorname{Spec}(k)\to X\times_{Z_X}W$ be a point. We need to show that there is exactly one lift of $(x,w)$ to $W\times_{Z_X}W$ that is in the image of $R_W$. Note that the morphism $(f\circ s, t)\colon R_W\to X\times_{Z_X}W$ is surjective, so we get a lift $r\colon\operatorname{Spec}(k)\to R_W$ of $(x,w)$ after replacing $k$ with a field extension of itself. It suffices to show that, after replacing $k$ with further field extensions, every lift of $(x,w)$ to $R_W$ has the same image in $W\times_{Z_X}W$. To this end, let $(w^\prime,w)$ be the image of $r$ in $W\times_{Z_X}W$. Conjugation identifies the fibre over $(w^\prime,w)$ with the stabilizer $\operatorname{Stab}(w^\prime)$. On the other hand, the fibre over $(x,w)$ in $R_{W}$ is isomorphic to $\operatorname{Stab}(x)$. This follows from the assumption that $f$ is square. Since $f$ is also stabilizer-preserving,  we see that the fibre over $(x,w)$ in $R_{W}$ identifies with the fibre over $(w^\prime,w)$ in $R_w$. This proves \ref{super_Rydh_3.12(1)}.

    For \ref{super_Rydh_3.12(2)}, suppose $\Gamma_{W/Z_{X}}$ is open. It follows from \ref{super_Rydh_3.12(1)} that $\Gamma_{W/Z_{X}}\to X\times_{Z_X}W$ is \'etale and universally bijective, and thus an isomorphism. In particular, $f\times\operatorname{id}_W$ is universally submersive. Conversely, if it is universally submersive over $\Gamma_{W/Z_{X}}$, in the sense of \cite[Definition 3.9]{Ryd13}, then it is a universal homeomorphism by \ref{super_Rydh_3.12(1)}. Since $f\times\operatorname{id}_W$ is unramified, we see that $\Gamma_{W/Z_{X}}$ is open by the proof of \cite[Lemma\ 3.10]{Ryd13}.

    For \ref{super_Rydh_3.12(3)}, we know that if such a morphism $\varphi$ exists, then $\Gamma_{W/Z_{X}}$ is given by the graph of $\varphi$, which is open. For the other direction, \ref{super_Rydh_3.12(1)} tells us that $\Gamma_{W/Z_{X}}\to X\times_{Z_X}W$ and $\Gamma_{W/Z_{X}}\to W\times_{Z_X}X$ are universally bijective. If $\Gamma_{W/Z_{X}}$ is open, then these two maps are both isomorphisms as \'etale universally bijective morphisms are isomorphisms. Therefore, we get an induced isomorphism $\varphi$ whose graph is precisely $\Gamma_{W/Z_X}$. It satisfies the cocycle condition by the composition law of the supergroupoid and the uniqueness of the compatible isomorphism in \Cref{lem:quotient_descent_datum_correspondence}.
\end{proof}

\begin{lemma}\label{lem:super_geometric_quotient_descent_condition}
    Let $(R,X)$ be a supergroupoid and let $q\colon X\to Z_{X}$ be a strong geometric quotient. If $q$ is of effective $\mathbf{\acute{E}t}_{\operatorname{sep}}$-descent, then $q$ satisfies the descent condition for $\mathbf{\acute{E}t}_{\operatorname{sep}}$ in the sense of \Cref{def:descent_condition_supergroupoids}.
\end{lemma}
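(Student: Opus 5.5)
Let $f\colon(R_W,W)\to(R_X,X)$ be a square, stabilizer-preserving morphism of supergroupoids with $W\to X$ separated and \'etale. We will reduce the statement to producing an isomorphism $\varphi$ making (\ref{diag:descent_condition}) commute. Once $\varphi$ exists and satisfies the cocycle condition, it is a descent datum for $W\to X$ relative to $q$. Since $q$ is of effective $\mathbf{\acute{E}t}_{\operatorname{sep}}$-descent, $\varphi$ is effective. \Cref{lem:quotient_descent_datum_correspondence} then yields a separated \'etale $Z_W\to Z_X$ with the Cartesian square (\ref{diag:descent_condition_def}), which is exactly the descent condition. By \Cref{lem:super_Rydh_3.12}\ref{super_Rydh_3.12(3)}, and the cocycle argument given there, it therefore suffices to show that $\Gamma_{W/Z_X}$ is open in $W\times_{Z_X}W$. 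By \ref{super_Rydh_3.12(2)}, this is equivalent to showing that $f\times_{Z_X}\operatorname{id}_W$ restricted to $\Gamma_{W/Z_X}$ is universally submersive.

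The strong hypothesis is used here, as in \cite[Proposition 3.13]{Ryd13}. Because $q$ is a strong geometric quotient, $j_{X/Z_X}\colon R_X\to X\times_{Z_X}X$ is universally submersive. Base change along the \'etale map $\operatorname{id}_X\times f\colon X\times_{Z_X}W\to X\times_{Z_X}X$ shows that $R_X\times_{X\times_{Z_X}X}(X\times_{Z_X}W)\to X\times_{Z_X}W$ is universally submersive. Squareness of $f$ (Cartesianness along $t$) identifies this fibre product with $R_W$, mapping via $(f\circ s,t)$. Thus $(f\circ s,t)\colon R_W\to X\times_{Z_X}W$ is universally submersive. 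This map factors as $R_W\to\Gamma_{W/Z_X}\to X\times_{Z_X}W$, where the first arrow is surjective onto its image. A composition that is universally submersive forces the second arrow, namely $f\times\operatorname{id}_W$ restricted to $\Gamma_{W/Z_X}$, to be universally submersive, in the sense of \cite[Definition 3.9]{Ryd13} with the reduced subspace structure on $\Gamma_{W/Z_X}$. Invoking \Cref{lem:super_Rydh_3.12}\ref{super_Rydh_3.12(2)} then shows that $\Gamma_{W/Z_X}$ is open.

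The step I expect to be the main obstacle is this submersivity transfer. It needs the correct notion of universal submersiveness for a constructible subset, as in Rydh, including the constructible-topology part, and I would check it stays purely topological, so that the super structure plays no role. Both conditions are topological and depend only on the underlying spaces, which agree with those of the bosonic reductions. If necessary, I would pass to bosonic reductions, since fibre products commute with bosonic reduction. I would also check that the square identification holds on the level of superspaces and not merely topologically; this follows directly from the Cartesian diagrams in the definition of square.
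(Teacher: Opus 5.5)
Your proposal is correct and is essentially the paper's proof. Like the paper, you reduce via \Cref{lem:quotient_descent_datum_correspondence} and \Cref{lem:super_Rydh_3.12}\ref{super_Rydh_3.12(3)} to openness of $\Gamma_{W/Z_X}$, base change the universally submersive $j_{X/Z_X}$ along $X\times_{Z_X}W\to X\times_{Z_X}X$, identify the fibre product with $R_W$ using squareness, and conclude with \Cref{lem:super_Rydh_3.12}\ref{super_Rydh_3.12(2)}; your remark that surjectivity of $R_W\to\Gamma_{W/Z_X}$ transfers universal submersiveness just makes explicit the paper's ``in other words'' step.
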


\begin{proof}
    Let $f\colon W\to X$ be a square, separated, \'etale, and stabilizer-preserving morphism. \Cref{lem:quotient_descent_datum_correspondence} tells us that it suffices to show that $\Gamma_{W/Z_{X}}$ is open. Consider the following Cartesian diagram.
    \begin{equation*}
        \begin{tikzcd}[column sep=huge]
            R_W\arrow[r,"(f\times\operatorname{id}_{W})\circ j_{W/Z_X}"]\arrow[d]\arrow[rd,phantom,"\square"] & X\times_{Z_X}W\arrow[d]\\
            R_X\arrow[r,"j_{X/Z_X}"] & X\times_{Z_X}X.
        \end{tikzcd}
    \end{equation*}
    Since $q$ is a strongly geometric quotient, the map $j_{X/Z_X}\colon R_{X}\to X\times_{Z_X}X$ is universally submersive. It follows that $(f\times\operatorname{id}_{W})\circ j_{W/Z_X}$ is also universally submersive. In other words, the morphism $(f\times\operatorname{id}_{W})$ is universally submersive over $\Gamma_{W/Z_X}$. Hence $\Gamma_{W/Z_X}$ is open by \Cref{lem:super_Rydh_3.12}\ref{super_Rydh_3.12(2)}. 
\end{proof}

\begin{theorem}\label{thm:super_geometric_quotient_descent_categorical}
    Let $(R,X)$ be a supergroupoid and let $q\colon X\to Z_{X}$ be a strong geometric quotient. If $q$ is integral, then it satisfies the descent condition for $\mathbf{\acute{E}t}_{\text{sep}}$ uniformly. In particular, $q\colon X\to Z_{X}$ is a categorical quotient.
\end{theorem}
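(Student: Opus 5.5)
The plan is to assemble the theorem from three earlier results: \Cref{thm:effective_descent_integral_sep}, \Cref{lem:super_geometric_quotient_descent_condition}, and \Cref{lem:geometric_quotients_descent_categorical}. The only real content is to check that the hypotheses ``integral'' and ``strong geometric quotient'' survive flat base change, which is what the word ``uniformly'' requires.

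First, fix a flat morphism $Z^\prime\to Z_X$ and base change the supergroupoid and $q$ to get $q^\prime\colon X^\prime\to Z^\prime$ with supergroupoid $(R^\prime,X^\prime)$. By \Cref{prop:geometric_quotient_uniform_local}\ref{prop:geometric_quotient_uniform_local1}, $q^\prime$ is again a strong geometric quotient. Integral morphisms are stable under arbitrary base change: this follows from \Cref{lem:integral_is_affine+universally_closed} together with \Cref{def:integral_morphism_superspaces}, since affineness and universal closedness are stable. Hence $q^\prime$ is integral.

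Second, we need $q^\prime$ to be surjective in order to apply \Cref{thm:effective_descent_integral_sep}. A geometric quotient is in particular a topological quotient, so $|X^\prime|\to|Z^\prime|$ is a coequalizer of topological spaces. Being universally topological, it is surjective: for a field-valued point $\operatorname{Spec}k\to Z^\prime$, the base change to $\operatorname{Spec}k$ is again a coequalizer, and the coequalizer of maps into the empty space is empty, not a point. So $q^\prime$ is integral and surjective. Then \Cref{thm:effective_descent_integral_sep} says $q^\prime$ is of (universal) effective $\mathbf{\acute{E}t}_{\operatorname{sep}}$-descent. \Cref{lem:super_geometric_quotient_descent_condition} then shows that $q^\prime$ satisfies the descent condition for $\mathbf{\acute{E}t}_{\operatorname{sep}}$. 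Since $Z^\prime\to Z_X$ was an arbitrary flat morphism, $q$ satisfies the descent condition uniformly.

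Finally, taking $Z^\prime=Z_X$ gives the descent condition for $q$ itself, and \Cref{lem:geometric_quotients_descent_categorical} yields that $q$ is a categorical quotient.

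The only potentially delicate point is the surjectivity argument, together with checking that the lemmas in \Cref{sec:descent_condition} use exactly the hypotheses available after flat base change. I expect both to be routine. If surjectivity were problematic, I would instead deduce it from the surjectivity of $j$ and the universal topological quotient property restricted to geometric points.
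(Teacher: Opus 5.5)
Your proposal is correct and follows the paper's proof. It uses the same ingredients: surjectivity from the topological-quotient property, \Cref{thm:effective_descent_integral_sep} for effective $\mathbf{\acute{E}t}_{\text{sep}}$-descent, \Cref{lem:super_geometric_quotient_descent_condition} together with \Cref{prop:geometric_quotient_uniform_local} for uniformity, and \Cref{lem:geometric_quotients_descent_categorical} for categoricity; the only difference is that you base change first rather than citing universal effective descent, and your surjectivity argument, while valid, could simply note that the canonical map onto a coequalizer of topological spaces is always surjective.
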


\begin{proof}
    Since $q\colon X\to Z_{X}$ is a topological quotient, it is surjective. Since $q$ is integral and surjective, it remains so after arbitrary base change. By \Cref{thm:effective_descent_integral_sep}, $q$ is of universal effective $\mathbf{\acute{E}t}_{\text{sep}}$-descent. By \Cref{lem:super_geometric_quotient_descent_condition} and \Cref{prop:geometric_quotient_uniform_local}, $q$ satisfies the descent condition for $\mathbf{\acute{E}t}_{\text{sep}}$ uniformly. It follows from \Cref{lem:geometric_quotients_descent_categorical} that $q$ is a categorical quotient.
\end{proof}

Recall that an equivariant morphism is a \textit{GC quotient} if it is a strong geometric quotient and satisfies the descent condition for separated \'etale morphisms uniformly. \Cref{thm:super_geometric_quotient_descent_categorical} implies that a strong geometric quotient that is integral is a GC quotient. We end this section with the following gluing result for geometric quotients, which is essential in the proof of the main theorem of this article. 

\begin{theorem}[{cf.\ \cite[Theorem\ 3.19]{Ryd13}}]\label{thm:super_Kollar_Rydh}
    Let $S$ be a superscheme. Let $f\colon(R_W,W)\to(R_X,X)$ be a separated, \'etale, square, stabilizer-preserving, and surjective morphism of supergroupoids. Let $Q=W\times_{X}W$. If $(R_W,W)$ admits a strong geometric quotient $q\colon W\to Z_W$ that is integral, then there exist GC quotients $X\to Z_X$ and $Q\to Z_Q$ such that $Z_X$ is the quotient of \'etale equivalence relation $Z_Q\rightrightarrows Z_W$ and the squares in the following diagram
    \begin{equation}\label{dia:super_Kollar_Rydh}
        \begin{tikzcd}
            Q\arrow[r, shift left=1]\arrow[r, shift right=1]\arrow[d] & W\arrow[r]\arrow[d] & X\arrow[d]\\
            Z_Q\arrow[r, shift left=1]\arrow[r, shift right=1] & Z_W\arrow[r] & Z_{X}
        \end{tikzcd}
    \end{equation}
    are Cartesian. Moreover, the quotient $X\to Z_X$ is also categorical.
\end{theorem}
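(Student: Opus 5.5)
Following \cite[Theorem\ 3.19]{Ryd13}, I will first construct $Z_Q$ from $Z_W$, then $Z_X$ as a quotient of an étale equivalence relation, and finally transfer the quotient properties to $X\to Z_X$ by descent along the étale surjection $Z_W\to Z_X$.

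\textbf{Step 1: the quotient $Z_Q$.} Equip $Q=W\times_XW$ with the pulled-back groupoid $R_Q=R_W\times_{R_X}R_W$. Since $f$ is square, the two projections $p_1,p_2\colon(R_Q,Q)\to(R_W,W)$ are again square. They are separated and étale, being base changes of $f$, and they are stabilizer-preserving because $f$ is. By \Cref{thm:super_geometric_quotient_descent_categorical}, $q$ satisfies the descent condition for $\mathbf{\acute{E}t}_{\operatorname{sep}}$ uniformly. Applying it to $p_1$ gives a separated étale $Z_Q\to Z_W$ with $Q\cong W\times_{Z_W}Z_Q$, so $Q\to Z_Q$ is a strong geometric quotient and is integral by base change. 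By \Cref{lem:quotient_descent_datum_correspondence}, $Z_Q$ is unique, and the categoricity in \Cref{lem:geometric_quotients_descent_categorical} shows that $p_2$ also descends to a map $Z_Q\to Z_W$. I will then check that this second square is Cartesian. For this I plan to apply the descent condition to $p_2$ as well and use uniqueness to identify the two resulting quotients with $Z_Q$. The same argument applied to $W\times_XW\times_XW$ produces the composition, inverse and identity maps, so that $Z_Q\rightrightarrows Z_W$ becomes a groupoid with étale separated source and target.

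\textbf{Step 2: it is an equivalence relation.} I need $Z_Q\to Z_W\times Z_W$ to be a monomorphism. The plan is to check that it is universally injective on points and unramified. Unramifiedness is automatic, since the map is a morphism between spaces étale over $Z_W$. Injectivity should come from $\Gamma_{W/Z}$-type arguments: points of $Z_Q$ are $R_Q$-orbits, and stabilizer preservation forces two $R_Q$-orbits with the same image in $Z_W\times Z_W$ to coincide, exactly as in \Cref{lem:super_Rydh_3.12}\ref{super_Rydh_3.12(1)}. To pass from pointwise injectivity to a monomorphism I would reduce to bosonic reductions (monomorphism of unramified maps can be tested on points after base change) and invoke Rydh's argument there. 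I expect this to be the main obstacle, since the super structure must not introduce extra nilpotent identifications. I will try to deal with it via \Cref{lem:super_same_ET_descent_datum}, which says étale objects are controlled by the bosonic reduction.

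\textbf{Step 3: the quotient $Z_X$ and its properties.} Set $Z_X=Z_W/Z_Q$, an algebraic superspace by the étale presentation. Since $W\to X$ is étale surjective with $Q=W\times_XW$, equivariance gives a map $X\to Z_X$. The square $W\to X$ over $Z_W\to Z_X$ will be shown Cartesian by comparing the two étale equivalence relations $Q\rightrightarrows W$ and $Z_Q\times_{Z_W}W\rightrightarrows W$, which agree by Step 1. Then $X\times_{Z_X}Z_W\cong W\to Z_W$ is a strong geometric quotient and integral. By \Cref{prop:geometric_quotient_uniform_local}\ref{prop:geometric_quotient_uniform_local2} together with \Cref{lem:integral_morphism_etale_local}, $X\to Z_X$ is an integral strong geometric quotient. \Cref{thm:super_geometric_quotient_descent_categorical} then shows that it is a GC quotient and categorical. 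The same reasoning applies to $Q\to Z_Q$.
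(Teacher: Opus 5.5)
Your proposal is correct. Steps 1 and 3 are essentially the paper's argument: you descend $p_1$ and $p_2$ via the descent condition, identify the two quotients of $Q$ by categoricity, and descend the diagonal, swap and composition to get the groupoid $Z_Q\rightrightarrows Z_W$. You then set $Z_X=Z_W/Z_Q$ and transfer the quotient properties along $Z_W\to Z_X$ using \Cref{prop:geometric_quotient_uniform_local}, \Cref{lem:integral_morphism_etale_local} and \Cref{thm:super_geometric_quotient_descent_categorical}. Identifying the $p_1$- and $p_2$-quotients needs uniqueness of categorical quotients, since \Cref{lem:quotient_descent_datum_correspondence} only gives uniqueness for a fixed descent datum.

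Step 2 is packaged differently from the paper. The paper shows that the stabilizer $Z_Q\times_{Z_W\times Z_W}Z_W$ is trivial by checking after pullback to $W$. There, the diagonal $W\to Q\times_{Z_W\times Z_W}Z_W$ is an open immersion because $f$ is \'etale. It is surjective by a point argument: a point $(w,w')$ of $Q$ with $q(w)=q(w')$ lifts to some $r\in R_W$; stabilizer preservation gives $g\in\operatorname{Stab}_W(w)$ with $f(g)=f(r)$; the Cartesian source square forces $g=r$, so $w'=w$.

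Your criterion, unramified plus universally injective, is a valid alternative, and the obstacle you anticipate does not arise. The diagonal of an unramified morphism is an open immersion; universal injectivity makes it surjective, hence an isomorphism. Universal injectivity is a condition on field-valued points, so odd nilpotents play no role. No detour through bosonic reductions or \Cref{lem:super_same_ET_descent_datum} is needed.

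Your orbit-injectivity claim reduces to the paper's point argument. Suppose $(w_1,w_2)$ and $(w_1',w_2')$ in $Q$ have the same image in $Z_W\times Z_W$. Pick $r_1\in R_W$ from $w_1$ to $w_1'$, and let $r_2$ be the unique arrow with source $w_2$ and $f(r_2)=f(r_1)$, which exists by the source square. Then $(r_1,r_2)\in R_Q$ moves $(w_1,w_2)$ to $(w_1',t(r_2))$. Now $(t(r_2),w_2')$ is a point of $Q$ with equal images in $Z_W$, so the paper's argument gives $t(r_2)=w_2'$.

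One small correction: $Z_W\times_S Z_W$ is not \'etale over $Z_W$. Unramifiedness of $Z_Q\to Z_W\times_S Z_W$ should instead follow by cancellation: its composite with the first projection is the \'etale source map.
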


\begin{proof}
    Let $p_1,p_2\colon Q\to W$ be the two projections. Since $f$ is separated, \'etale, square, and stabilizer-preserving, so are $p_1$ and $p_2$. By \Cref{thm:super_geometric_quotient_descent_categorical}, we have a GC quotient $W\to Z_W$. In particular, it satisfies the descent condition. Applying this to the two projections $p_1,p_2$ yield two quotients $Q\to Z_{Q}$ and $Q\to Z_{Q}^\prime$ corresponding to $p_1$ and $p_2$ such that the two squares
    \begin{equation*}
        \begin{tikzcd}
            Q\arrow[r,"p_1"]\arrow[d] & W\arrow[d] & Q\arrow[r,"p_2"]\arrow[d] & W\arrow[d]\\
            Z_{Q}\arrow[r] & Z_W & Z_Q^{\prime}\arrow[r] & Z_W
        \end{tikzcd}
    \end{equation*}
    are Cartesian. It follows that $Q\to Z_{Q}$ and $Q\to Z_{Q}^\prime$ are integral and surjective strong geometric quotients. Applying \Cref{thm:super_geometric_quotient_descent_categorical} tells us that they are also GC quotients and thus categorical. By the universal property of categorical quotients, we see that $Z_{Q}\cong Z_{Q}^\prime$. We claim that this defines a supergroupoid over $S$. Note that we have stabilizer-preserving morphisms $(p_1,p_2)\colon Q\rightrightarrows W$. Consider the diagonal map $W\to Q$, the morphism $Q\to Q$ given by exchanging the factors, and the projection $Q\times_{W}Q\cong W\times_X W\times_X W\to Q\cong W\times_X W$ to the first and the last factors. These maps induce square, separated, \'etale, and stabilizer-preserving morphisms between the corresponding supergroupoids. By the descent condition and uniqueness of geometric quotients, these maps descend to separated and \'etale morphisms $Z_W\to Z_Q$, $Z_{Q}\to Z_{Q}$, and $Z_Q\times_{Z_W}Z_Q\to Z_Q$. This gives us a supergroupoid $ Z_Q\rightrightarrows Z_W$ as claimed.

    We now show that the maps $Z_Q\rightrightarrows Z_W$ form an \'etale equivalence relation. We do so by showing the map $Z_Q\to Z_W\times Z_W$ is a monomorphism, or equivalently, the map $Z_Q\times_{Z_W\times Z_W}Z_W\to Z_W$ is an isomorphism. Consider the morphism $W\times_{Z_W\times Z_W}Z_W\to Q\times_{Z_W\times Z_W}Z_W$ given by the diagonal $W\to Q$ and the identity map on $Z_W$. This morphism is an open immersion since $f$ is \'etale. Let $(w,w^\prime)$ be a $k$-point of $Q=W\times_{X} W$ such that $q(w)=q(w^\prime)$. By taking a field extension of $k$, we may assume that $(w,w^\prime)$ lifts to a $k$-point $r$ of $R_W$. Since $f(w)=f(w^\prime)$, we see that the image of $r$ along $f$ lies in $\operatorname{Stab}_X(f(w))$. Since $f$ is stabilizer-preserving, we have $\operatorname{Stab}_W(w)\cong\operatorname{Stab}_X(f(w))$. There exists $g\in\operatorname{Stab}_W(w)$ such that $f_R(g)=f_R(r)$. Note that $g$ and $r$ have the same source and image in $R_X$. Since the source square is Cartesian, we must have $g=r$. It follows that $w^\prime=t(r)=t(g)=w$. It follows that $(w,w^\prime)$ lifts to a $k$-point of $W$. This implies that the map $W\times_{Z_W\times Z_W}Z_W\to Q\times_{Z_W\times Z_W}Z_W$ is surjective, and thus an isomorphism. Since $W\to Z_W$ is of effective $\mathbf{\acute{E}t}_{\operatorname{sep}}$-descent, we get an isomorphism $Z_Q\times_{Z_W\times Z_W}Z_W\cong Z_W$ as desired.

    Let $Z_X$ be the quotient of the \'etale equivalence relation $Z_Q\rightrightarrows Z_W$. By construction, $Z_X$ is an algebraic superspace. By construction, the composition $W\to Z_W\to Z_X$ is equivariant under $Q\rightrightarrows W$. Since $X$ is the quotient of $W$ by $Q$, this composition descends to a morphism $X\to Z_X$ and \'etale descent gives the following Cartesian diagram
    \begin{equation*}
        \begin{tikzcd}
            W\arrow[r,"f"]\arrow[d] & X\arrow[d]\\
            Z_{W}\arrow[r] & Z_X.
        \end{tikzcd}
    \end{equation*}
    Since $W\to Z_W$ is integral and surjective, so is $X\to Z_{X}$ by \Cref{lem:integral_morphism_etale_local}. By \Cref{prop:geometric_quotient_uniform_local}, we see that $X\to Z_X$ is also a strong geometric quotient. By \Cref{thm:super_geometric_quotient_descent_categorical}, $X\to Z_X$ is a GC quotient that is also categorical. This completes the proof. 
\end{proof}

\section{The Keel--Mori theorem for Deligne--Mumford superstacks}\label{sec:keel_mori}

\subsection{Quotients by finite \'etale supergroupoids}\label{sec:quotient}

We first establish the existence of coarse moduli superspaces for algebraic superstacks that admit finite, \'etale coverings of superschemes. Our argument is similar to those for the ordinary case in \cite{Con05} and \cite[Section\ 6.2]{Ols16}, which we follow closely. The main effort is devoted to proving the affine case, from which the general case follows using a reduction argument. The restriction to finite \'etale supergroupoids is necessary for us. 

Consider a supergroupoid $(U,R,s,t,c)$ over $S$ that is finite locally free, and $U$ is affine. It follows that $R$ is also affine. Let $U=\operatorname{sSpec}(A)$ and $R=\operatorname{sSpec}(B)$ for some superrings $A$ and $B$. By assumption, $B$ is a finitely generated $A$-module. With a slight abuse of notation, we write $s,t\colon A\rightrightarrows B$ for the corresponding morphisms of superrings. Set $C=B\otimes_{t,A,s}B$, and we obtain a map $c\colon B\to C$ induced by the composition map of our supergroupoid. Let $A^{R}$ be the equalizer of $s,t\colon A\rightrightarrows B$. The following is a variant of \cite[Lemma\ 6.2.9]{Ols16} in the super setting.

\begin{lemma}\label{lem:invariant_superring}
    Let $a$ be an element in $A$. If $s,t$ are \'etale, then there exists a monic polynomial $P_{a}(x)\in A_{0}^{R}[x]$ such that $P_{a}(a)=0$. In particular, the extension $A^R\to A$ is integral.
\end{lemma}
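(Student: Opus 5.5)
The plan is to imitate the characteristic-polynomial argument of \cite[Lemma\ 6.2.9]{Ols16}, replacing $a$ by its even part and using \Cref{lem:integral_bosonic} to recover the whole statement. Write $a=a_0+a_1$. By \Cref{lem:integral_bosonic}, it suffices to find a monic polynomial $Q(x)\in A_0^R[x]$ with $Q(a_0)=0$. Then $P_a=Q^2$ kills $a$, exactly as in the proof of that lemma. Integrality of $A^R\to A$ follows at once.

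Since $s,t$ are finite and \'etale, $B$ is a finite locally free $A$-module via $s$. I would first Zariski-localize on $\operatorname{sSpec}(A_0)$, or work with the norm of a locally free module directly, so that $B$ is free of rank $n$ via $s$. For this I use that a finite \'etale superalgebra is locally free as a module. Consider the even element $t(a_0)\in B_0$. Multiplication by $t(a_0)$ is an even $A$-linear endomorphism of the $A$-module $B$ (via $s$). It also preserves the parity decomposition $B=B_0\oplus B_1$. So I define
\[
  Q(x)=\det\bigl(x-t(a_0)\,\big|\,B_0\bigr)\cdot\det\bigl(x-t(a_0)\,\big|\,B_1\bigr),
\]
using the characteristic polynomials over the even ring. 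One technical point is that $B_0$ and $B_1$ are locally free over $A_0$ only after quotienting by nilpotents, so the determinant must be taken carefully. A cleaner choice is to take the characteristic polynomial of $t(a_0)$ acting on $B$ regarded as a finite locally free $A$-module, with coefficients in $A_0$. Because the operator is even, the entries lie in $A_0$ and the ordinary determinant makes sense. This $Q$ is monic. By Cayley--Hamilton (over the commutative ring $A_0$, on a free module), $Q(t(a_0))=0$ in $B$. Applying the identity section $e\colon B\to A$, which satisfies $e\circ t=\mathrm{id}$ and $e\circ s=\mathrm{id}$, gives $Q(a_0)=0$, since the coefficients are in $A_0$ and $e$ restricted to them via $s$ is the identity.

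The main step is showing that the coefficients of $Q$ lie in $A^R$, i.e.\ that $s(Q)=t(Q)$ in $B_0[x]$. For this I use the composition $c$ and the standard base-change argument. Pull back the $A$-module $B$ (via $s$) along $s$ and along $t$. The two pullbacks are $C=B\otimes_{t,A,s}B$ viewed over $B$ in two ways, using the isomorphism $R\times_{s,U,t}R\cong R\times_{s,U,s}R$, $(r,r')\mapsto (r,c(r,r'))$. Under these identifications, multiplication by $t(a_0)$ corresponds to multiplication by the same element, because $t\circ c$ equals $t$ of the first factor. Characteristic polynomials commute with base change and are invariant under isomorphism, so $s(Q)=t(Q)$.

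I expect this bookkeeping to be the main obstacle, together with making the determinant well defined for a locally free supermodule. For the latter I would first try reducing to the free case by localizing at elements of $A_0^R$. That may not suffice, in which case I would instead define coefficients as traces of exterior powers, which glue Zariski-locally on $A_0$ and still commute with base change.
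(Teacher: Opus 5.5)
\textbf{Gap: the even-basis claim, which is where \'etaleness must enter.} Your route is to run the characteristic-polynomial argument of \cite[Lemma\ 6.2.9]{Ols16} directly on the supergroupoid and finish with \Cref{lem:integral_bosonic}. This can be made to work, but as written it has a genuine gap. You claim that the matrix of multiplication by $t(a_0)$ on $B$ has entries in $A_0$ ``because the operator is even''. That holds only for a local basis of $B$ over $A$ consisting of \emph{even} elements, i.e.\ when $B$ is locally free of rank $n|0$. For a homogeneous basis containing odd vectors, an even endomorphism has odd off-diagonal blocks. Then the ordinary determinant, Cayley--Hamilton and base-change invariance are no longer available, and the Berezinian is not a polynomial.

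Apart from local freeness, which holds for any finite locally free supergroupoid, your argument never uses \'etaleness. It would therefore apply equally to the non-\'etale supergroupoid of \Cref{ex:G_a^-action_not_integral}, where the conclusion is false. There $B=A[\theta]$ is free of rank $1|1$ over $A=\mathbb{C}[x|\xi]$, and multiplication by $t(x)=x+\theta\xi$ in the basis $\{1,\theta\}$ has the odd entry $\pm\xi$.

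\textbf{Repair.} Use \'etaleness explicitly. By \Cref{lem:ev_commutes_etale_base_change}, $B\cong B_0\otimes_{A_0}A$ with $A_0\to B_0$ finite \'etale; finiteness follows from \Cref{lem:integral+finite_type=finite} and \Cref{lem:integral_bosonic}. So Zariski-locally on $\operatorname{Spec}A_0$, an $A_0$-basis of $B_0$ is an even $A$-basis of $B$. You may localize at arbitrary elements of $A_0$, since characteristic polynomials glue; there is no need to localize at invariants. Your $Q$ then becomes $\det(x-t(a_0)\mid B_0)\in A_0[x]$, and Cayley--Hamilton, the identity section and the invariance argument go through.

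\textbf{Correction in the invariance step.} Take the isomorphism over the copy of $R$ along which you base change. With $c\colon R\times_{s,U,t}R\to R$, $s\circ c=s\circ\mathrm{pr}_2$ and $t\circ c=t\circ\mathrm{pr}_1$, use $(r_1,r_2)\mapsto(c(r_1,r_2),r_2)$ from $R\times_{s,U,t}R$ to $R\times_{s,U,s}R$. Your map $(r,r')\mapsto(r,c(r,r'))$ instead lands in $R\times_{t,U,t}R$. With the corrected map, $t\circ c=t\circ\mathrm{pr}_1$ is exactly what identifies the two endomorphisms and gives $s(Q)=t(Q)$.

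\textbf{Comparison with the paper.} With this fix, your proof differs from the paper's mainly in packaging. The paper applies $(-)_{\operatorname{ev}}$ to the whole supergroupoid and uses \Cref{lem:ev_commutes_etale_base_change} to see that $\operatorname{Spec}B_0\rightrightarrows\operatorname{Spec}A_0$ is an honest finite \'etale groupoid. It then quotes \cite[Lemma\ 6.2.9]{Ols16} for $A_0$ over $(A_0)^{R_0}$ and observes $(A^R)_0=(A_0)^{R_0}$ before invoking \Cref{lem:integral_bosonic}. Your repaired $Q$ is literally the polynomial Olsson's proof produces for that ordinary groupoid. You verify its invariance with the composition law in the super category, so you never need that $(-)_{\operatorname{ev}}$ preserves the groupoid structure, but you must settle the even-basis issue by hand. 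The paper instead pays once for the compatibility of $(-)_{\operatorname{ev}}$ with \'etale fibre products and then cites the ordinary result verbatim.
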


\begin{proof}
    We apply the bosonic quotient functor to the finite \'etale supergroupoid $(U,R,s,t,c)$. This gives us a finite \'etale groupoid $(U_{\operatorname{ev}},R_{\operatorname{ev}},s_{\operatorname{ev}},t_{\operatorname{ev}},c_{\operatorname{ev}})$ where $U_{\operatorname{ev}}=\operatorname{Spec}(A_0)$ and $R_{\operatorname{ev}}=\operatorname{Spec}(B_0)$. This is because taking the bosonic quotient commutes with \'etale base change \Cref{lem:ev_commutes_etale_base_change}. By the ordinary case \cite[Lemma\ 6.2.9]{Ols16}, we see that $A_0$ is integral over $(A_0)^{R_0}=\operatorname{Eq}(s_0,t_0\colon A_0\to B_0)$. But by the universal property of equalizers, we have $(A^R)_0=(A_0)^{R_0}$. The statement then follows verbatim from \Cref{lem:integral_bosonic}.
\end{proof}

\Cref{lem:invariant_superring} is sufficient for this article. The assumption that the supergroupoid is \'etale is sufficient for the bosonic quotient $(U_{\operatorname{ev}},R_{\operatorname{ev}},s_{\operatorname{ev}},t_{\operatorname{ev}},c_{\operatorname{ev}})$ to be an ordinary groupoid. Without the \'etale hypothesis, taking bosonic quotients need not commute with fibre products. Therefore the argument above does not produce an ordinary groupoid in general. We demonstrate how this could fail when the groupoid is no longer \'etale.

\begin{example}\label{ex:G_a^-}
    Let $k$ be a field such that $\operatorname{char}(k)\neq 2$. Consider the supergroup $\mathbb{G}_a^{0|1}=\operatorname{sSpec}(k[\theta])$ where $\theta$ is an odd variable. Note that the map $\mathbb{G}_a^{0|1}$ is finite locally free but not \'etale over $k$. There is a natural trivial action of $\mathbb{G}_a^{0|1}$ on $\operatorname{Spec}(k)$. The source and target maps are both the projection map $\mathbb{G}_a^{0|1}\to\operatorname{Spec}(k)$ and the composition map is precisely the multiplication map $\mathbb{G}_a^{0|1}\times_k\mathbb{G}_a^{0|1}\to \mathbb{G}_a^{0|1}$. This gives us a supergroupoid $(\operatorname{Spec}(k),\mathbb{G}_a^{0|1},s,t,c)$. We claim that the bosonic quotient of this supergroupoid is not an ordinary groupoid. Indeed, taking the bosonic quotient on objects yields $s_{\operatorname{ev}},t_{\operatorname{ev}}\colon\operatorname{Spec}(k)\rightrightarrows\operatorname{Spec}(k)$, where $s_{\operatorname{ev}}=t_{\operatorname{ev}}$ is the identity map. However, the bosonic quotient of the composition map is given by $c_{\operatorname{ev}}\colon\operatorname{Spec}(k[\epsilon]/(\epsilon^2))\to\operatorname{Spec}(k)$. So we see that$$(R\times_UR)_{\operatorname{ev}}\cong\operatorname{Spec}(k[\epsilon]/(\epsilon^2))\ncong\operatorname{Spec}(k)\cong R_{\operatorname{ev}}\times_{U_{\operatorname{ev}}}R_{\operatorname{ev}}.$$This shows that the bosonic quotient of the composition morphism need not have the required source to satisfy the ordinary groupoid composition law.
\end{example}

However, \Cref{lem:invariant_superring} is false for finite locally free supergroupoids. To this end, we provide the following counterexample.

\begin{example}\label{ex:G_a^-action_not_integral}
    Consider the supergroup $\mathbb{G}_a^{0|1}=\operatorname{sSpec}(\mathbb{C}[\theta])$ acting on $\mathbb{A}^{1|1}=\operatorname{sSpec}(\mathbb{C}[x| \xi])$ by $\theta\cdot(x,\xi)=(x+\theta\xi,\xi)$. We claim that $\mathbb{C}[x|\xi]^{\mathbb{G}_a^{0|1}}=\mathbb{C}\oplus\xi \mathbb{C}[x]$. To see this, we first observe that every element in $\mathbb{C}[x|\xi]$ is of the form $f(x)+\xi g(x)$ for some $f(x),g(x)\in\mathbb{C}[x]$. We see that an element $f(x)+\xi g(x)\in\mathbb{C}[x|\xi]$ is $\mathbb{G}_a^{0|1}$-invariant if and only if $f(x)+\xi g(x)=f(x+\theta\xi)+\xi g(x+\theta\xi)$. Evaluating the right-hand side gives
    \begin{align*}
        f(x)+\xi g(x)&=f(x+\theta\xi)+\xi g(x+\theta\xi)\\
        &=f(x)+\theta\xi f^\prime(x)+\xi(g(x)+\theta\xi g^\prime(x))\\
        &=f(x)+\theta\xi f^\prime(x)+\xi g(x)-\theta\xi^2 g^\prime(x)\\
        &=f(x)+\theta\xi f^\prime(x)+\xi g(x).
    \end{align*}
    This implies that $f(x)+\xi g(x)\in\mathbb{C}[x|\xi]^{\mathbb{G}_a^{0|1}}$ if and only if $f(x)$ is constant, and the claim follows. It is clear that $\mathbb{C}\oplus \xi\mathbb{C}[x]\to\mathbb{C}[x|\xi]$ cannot be an integral extension. This follows from \Cref{lem:integral_bosonic} and the fact that $\mathbb{C}\to \mathbb{C}[x]$ is not an integral extension.
\end{example}

Let $X=\operatorname{sSpec}(A^{R})$ and let $q\colon U\to X$ be the morphism of superschemes induced by the inclusion $A^{R}\to A$. Let $\mathcal{X}$ be the quotient superstack associated to our supergroupoid $s,t\colon R\rightrightarrows U$. Then we obtain an induced map $\pi\colon\mathcal{X}\to\operatorname{sSpec}(A^{R})$ making the following diagram commute 
\begin{equation}
    \begin{tikzcd}
        U=\operatorname{sSpec}(A)\arrow[d,"p"]\arrow[rd,"q"] & \\
        \mathcal{X}=[U/R]\arrow[r,"\pi"] & X=\operatorname{sSpec}(A^{R}),
    \end{tikzcd}
\end{equation}
where $p$ is the quotient map, which is finite and flat. By construction, we have the following exact sequence
\begin{equation}\label{dia:invariants}
    \begin{tikzcd}
        0\arrow[r] & A^{R}\arrow[r,hookrightarrow] & A\arrow[r, shift left,"s"]\arrow[r,shift right, swap, "t"] & B.
    \end{tikzcd}
\end{equation}
The following lemma implies that the formation of $\pi$ is compatible with flat base change.

\begin{lemma}\label{lem:flat_base_change}
    Let $A^{R}\to D$ be a flat morphism of superrings. Let $s^\prime,t^\prime\colon R^\prime=\operatorname{sSpec}(B\otimes_{A^{R}}D)\rightrightarrows U^\prime=\operatorname{sSpec}(A\otimes_{A^{R}}D)$ be the supergroupoid obtained by base change to $D$. Let $(A\otimes_{A^{R}}D)^{R^\prime}$ be the equalizer of $s^\prime$ and $t^\prime$. Then we have $D=(A\otimes_{A^{R}}D)^{R^\prime}$.
\end{lemma}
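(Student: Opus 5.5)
The plan is to tensor the exact sequence (\ref{dia:invariants}) with $D$ over $A^R$ and use flatness. Because $A^R\to D$ is flat, the functor $-\otimes_{A^R}D$ is exact on $A^R$-modules. The even and odd components are handled compatibly, since $D=D_0\oplus D_1$ is a graded flat module and tensor products of supermodules are graded. The equalizer $A^R=\ker(s-t\colon A\to B)$ is a kernel of a map of $A^R$-modules. Here $A$ and $B$ are viewed as $A^R$-modules via $q$ and via $q\circ s=q\circ t$ respectively; these agree on $A^R$ by definition. Tensoring therefore gives an exact sequence
\begin{equation*}
0\longrightarrow A^R\otimes_{A^R}D=D\longrightarrow A\otimes_{A^R}D\xrightarrow{(s-t)\otimes\operatorname{id}_D} B\otimes_{A^R}D .
\end{equation*}

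Next I would identify $(s-t)\otimes\operatorname{id}_D$ with $s^\prime-t^\prime$, where $s^\prime,t^\prime$ are the structure maps of the base-changed supergroupoid. This reduces to two checks.

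\textbf{First check: the base change is the claimed pair of superrings.} The supergroupoid obtained by base change along $\operatorname{sSpec}(D)\to X$ has $U^\prime=U\times_X\operatorname{sSpec}(D)=\operatorname{sSpec}(A\otimes_{A^R}D)$. Since $s$ and $t$ agree over $X$, we also have $R^\prime=R\times_X\operatorname{sSpec}(D)=\operatorname{sSpec}(B\otimes_{A^R}D)$. For this I must confirm that the two $A^R$-algebra structures on $B$, through $s$ and through $t$, coincide. They do, precisely because $A^R$ is the equalizer.

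\textbf{Second check: the maps agree.} The morphisms $s^\prime,t^\prime\colon A\otimes_{A^R}D\to B\otimes_{A^R}D$ are $s\otimes\operatorname{id}$ and $t\otimes\operatorname{id}$. Their difference is $(s-t)\otimes\operatorname{id}$ by additivity of the tensor product. The super-sign conventions cause no trouble here, since the identity on $D$ is even.

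With both checks in place, exactness identifies the equalizer $(A\otimes_{A^R}D)^{R^\prime}=\ker(s^\prime-t^\prime)$ with the image of $D$ under $d\mapsto 1\otimes d$, which proves the statement. Injectivity of $D\to A\otimes_{A^R}D$ also follows from flatness.

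I expect the main (mild) obstacle to be justifying that flatness in the super sense gives exactness of $-\otimes_{A^R}D$ on graded modules, that is, that left exactness survives in the $\mathbb{Z}_2$-graded setting. I would handle this by noting that a graded module is flat over a superring exactly when it is flat as an ungraded module over the underlying ring. This holds because the graded tensor product agrees with the ordinary tensor product of underlying modules, and the sign twist only affects the identification of left and right actions. The ordinary kernel computation then applies verbatim.
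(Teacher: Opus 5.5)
Your proposal is correct and takes the same route as the paper: tensor the equalizer sequence $0\to A^R\to A\rightrightarrows B$ with the flat $A^R$-module $D$, and recognize the resulting pair of maps as $s^\prime,t^\prime$. Your two checks, that the two $A^R$-structures on $B$ agree and that $s^\prime-t^\prime=(s-t)\otimes\operatorname{id}_D$, spell out what the paper's one-line proof leaves implicit.
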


\begin{proof}
    Since $D$ is flat over $A^{R}$, we obtain the following exact sequence from \ref{dia:invariants}
    \begin{equation}
        \begin{tikzcd}
            0\arrow[r] & D\arrow[r, hookrightarrow] & A\otimes_{A^{R}}D\arrow[r, shift left, "s^\prime"]\arrow[r,shift right, swap, "t^\prime"] & B\otimes_{A^{R}}D.
        \end{tikzcd}
    \end{equation}
    This tells us that $D$ is equal to the equalizer of $s^\prime$ and $t^\prime$, which is $(A\otimes_{A^{R}}D)^{R^\prime}$. 
\end{proof}

We are now ready to prove the main result of this section, which says that quotients by finite \'etale supergroupoids of affine superschemes give us strongly geometric quotients.

\begin{theorem}\label{thm:superspace_finite_flat_covering}
    Let $D$ be a superring. Let $s,t\colon R\rightrightarrows U$ be a finite \'etale supergroupoid over $\operatorname{sSpec}(D)$. If $U$ is an affine superscheme, then it admits a strong geometric quotient $q\colon U\to X$. Moreover, the quotient $q\colon U\to X$ satisfies the descent condition for $\mathbf{\acute{E}t}_{\text{sep}}$, and is thus, categorical. In particular, $q$ is integral and surjective. If $U$ is of finite type over $\operatorname{sSpec}(D)$, then $q$ is finite. If $D$ is also Noetherian, then $X$ is also of finite type.
\end{theorem}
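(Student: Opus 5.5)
We take $X=\operatorname{sSpec}(A^{R})$ and $q\colon U\to X$ induced by the inclusion $A^{R}\subseteq A$, with $U=\operatorname{sSpec}(A)$ and $R=\operatorname{sSpec}(B)$ as above. By \Cref{lem:invariant_superring}, $A^{R}\to A$ is an integral extension. Hence $q$ is integral, and by \Cref{lem:integral_surjective} it is universally closed and surjective. The equality $\mathcal{O}_X\cong(q_*\mathcal{O}_U)^{R}$ holds on global sections by construction. It holds on every distinguished open $D(h)$, $h\in A^R_0$, by \Cref{lem:flat_base_change} applied to the flat localization $A^R\to A^R_h$. So the sheaf condition of a geometric quotient holds, and by \Cref{lem:flat_base_change} it persists after any flat base change.

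The topological parts reduce to the ordinary case. We have $|U|=|\operatorname{Spec}(A_0)|$ and $|X|=|\operatorname{Spec}(A^R_0)|=|\operatorname{Spec}((A_0)^{R_0})|$, and the bosonic quotient $R_{\operatorname{ev}}\rightrightarrows U_{\operatorname{ev}}$ is a finite étale groupoid by \Cref{lem:ev_commutes_etale_base_change}. Taking bosonic quotients also commutes with étale base change, so $|R|=|R_{\operatorname{ev}}|$ and the relevant fibre products agree topologically. The ordinary theory (\cite{Con05}, \cite[Section\ 6.2]{Ols16}, \cite[Section\ 2]{Ryd13}) shows that $\operatorname{Spec}(A_0)\to\operatorname{Spec}((A_0)^{R_0})$ is a strong geometric quotient, in particular universally a topological quotient with $R_0\to U_{\operatorname{ev}}\times U_{\operatorname{ev}}$ universally submersive onto the fibre product.

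To transport universality we argue as follows. For an arbitrary base change $X'\to X$, the map $q'$ is still integral and its geometric fibres are the orbits, because points of $U'\times_{X'}U'$ over a field come from $R'$. This is checked on the bosonic level, where the ordinary result applies after reducing the base change to $X'_{\operatorname{bos}}$. This reduction is harmless because universal topological statements only depend on underlying topological spaces, which are insensitive to nilpotents and hence to bosonic reduction. Integral surjective morphisms are submersive in both the Zariski and constructible topologies, so $q'$ is a coequalizer. The surjectivity and universal submersiveness of $j\colon R\to U\times_XU$ likewise only depend on the bosonic reduction, where they are the ordinary statement. This shows $q$ is a strong geometric quotient.

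Descent and categoricity then follow directly from \Cref{thm:super_geometric_quotient_descent_categorical}, since $q$ is an integral strong geometric quotient. For the finiteness claims: if $U$ is of finite type over $\operatorname{sSpec}(D)$, then $A$ is a finitely generated $A^R$-superalgebra, since it is generated by the same finitely many generators over $D\subseteq A^R$. Being integral, $A$ is then finite over $A^R$ by \Cref{lem:integral+finite_type=finite}. If $D$ is moreover Noetherian, \Cref{lem:super_artin_tate} applied to $D\to A^R\subseteq A$ shows $A^R$ is finitely generated over $D$. This needs $D\to A^R$ injective, which we arrange by replacing $D$ by its image, still Noetherian. Hence $X$ is of finite type.

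We expect the main obstacle to be the universal topological and strong parts: carefully justifying that arbitrary (non-flat) base change of $q$ can be analysed on bosonic reductions and quotients. The point is that bosonic quotients do not commute with general fibre products, so the reduction must go through underlying topological spaces and the bosonic reduction $X'_{\operatorname{bos}}$ rather than through $(-)_{\operatorname{ev}}$.
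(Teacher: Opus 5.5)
Your proposal is correct and follows essentially the same route as the paper: the same quotient $\operatorname{sSpec}(A^R)$, integrality via \Cref{lem:invariant_superring}, the sheaf condition via \Cref{lem:flat_base_change}, and the topological statements via the bosonic-quotient groupoid, followed by \Cref{thm:super_geometric_quotient_descent_categorical}, \Cref{lem:integral+finite_type=finite} and \Cref{lem:super_artin_tate}. The one small difference is how you handle $j\colon R\to U\times_XU$. The paper proves surjectivity directly, by lifting geometric points through $|R|=|R_{\operatorname{ev}}|$, and gets universal submersiveness because $j$ is finite; you instead import both facts from the ordinary theorem via the universal homeomorphisms $R_{\operatorname{bos}}\to R_{\operatorname{ev}}$ and $(U\times_XU)_{\operatorname{bos}}\to U_{\operatorname{ev}}\times_{X_{\operatorname{ev}}}U_{\operatorname{ev}}$, which works but is slightly less self-contained.
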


\begin{proof}
    Let $U=\operatorname{sSpec}(A)$ and $R=\operatorname{sSpec}(B)$ as before. Set $X=\operatorname{sSpec}(A^{R})$. We claim that the natural map $q\colon U\to X$ is a strongly geometric quotient. To see it is a strong topological quotient, we readily reduce to the ordinary case by taking the bosonic quotient. Indeed, we get an ordinary groupoid $s_{\operatorname{ev}},t_{\operatorname{ev}}\colon R_{\operatorname{ev}}\rightrightarrows U_{\operatorname{ev}}$ with the same underlying topological space. By construction, this groupoid admits a quotient given by $q_{\operatorname{ev}}\colon U_{\operatorname{ev}}\to X_{\operatorname{ev}}$. Since $q$ is integral and surjective, it is universally submersive. Moreover, the fibres of $q$ are precisely the $R$-orbits.
    
    Consider the map $j\colon R\to U\times_X U$. We claim this map is surjective. To see this, let $(u_1,u_2)\colon\operatorname{Spec}(k)\to U\times_X U$ be geometric points with the same image in $X$. It follows that their images in $U_{\operatorname{ev}}$ map to the same point in $X_{\operatorname{ev}}$. Replacing $k$ by a field extension if necessary, we may find a geometric point $r_0$ such that $s_{\operatorname{ev}}(r_0)=(u_1)_{\operatorname{ev}}$ and $t_{\operatorname{ev}}(r_0)=(u_2)_{\operatorname{ev}}$. Since $|R_{\operatorname{ev}}|=|R|$, we may lift $r_0$ uniquely to a geometric point $r$ of $R$ such that $s(r)=u_1$ and $t(r)=u_2$. This proves the claim. Since $R\to U$ is finite and $U\times_XU$ is affine, we see that $j$ is also finite, and thus universally submersive. This shows that $q$ is a strong topological quotient. By \Cref{lem:flat_base_change}, we see that $X$ is also a geometric quotient, and thus a strongly geometric quotient as desired. Moreover, we also know that $q\colon U\to X$ is integral and surjective by \Cref{lem:invariant_superring}. Then \Cref{thm:super_geometric_quotient_descent_categorical} applies, and we see that $q$ satisfies the descent condition for $\mathbf{\acute{E}t}_{\text{sep}}$, and is categorical. We know that $A^{R}\to A$ is an integral extension. It follows that $q$ is integral and surjective. 
    
    Suppose $U$ is of finite type over a superring $D$. Consider the superring homomorphisms $D\to A^R\to A$. We know that $A^{R}\to A$ is an integral extension. Since $A$ is finitely generated over $D$, it is also finitely generated over $A^R$. By \Cref{lem:integral+finite_type=finite}, we see that $A^R\to A$ is finite and thus $q$ is finite. Suppose that $D$ is also Noetherian. By replacing $D$ by its image if necessary, we may assume $D$ is a subsuperring of $A^R$. By the Artin--Tate lemma (\Cref{lem:super_artin_tate}), $A^R$ is finitely generated over $D$ and the result follows.
\end{proof}

An interesting instance of \Cref{thm:superspace_finite_flat_covering} is the quotient of an affine superscheme by a finite \'etale group superscheme. Let $G$ be a finite \'etale group superscheme acting on a superring $A$. Then \Cref{thm:superspace_finite_flat_covering} implies that there is strongly geometric quotient $q\colon\operatorname{sSpec}(A)\to\operatorname{sSpec}(A^{G})$, where $A^{G}\subseteq A$ is the subsuperring of invariants. In particular, if $A$ is a finitely generated superalgebra over a Noetherian ring, then so is $A^{G}$. In the ordinary case, one shows that $A^{G}\to A$ is integral using the monic polynomial $\prod_{g\in G}(x-g(a))$ for every element $a\in A$. The same argument readily extends to finite group actions on superrings using \Cref{lem:integral_bosonic}. However, the situation become complicated for finite supergroup actions. This prompts us to use the argument in \Cref{lem:invariant_superring} instead.

We now establish a general version of \Cref{thm:superspace_finite_flat_covering}, which is used in the proof of the main theorem. We say a superscheme $X$ is AF if every finite set of points of $|X|$ is contained inside an affine open subsuperscheme of $X$. For example, affine superschemes, quasi-affine superschemes, and disjoint unions of affine superschemes are AF.

\begin{lemma}\label{lem:AF_superschemes}
    Let $s,t\colon R\rightrightarrows U$ be a finite \'etale supergroupoid. If $U$ is AF, then the orbit $O(u)\subseteq U$ is contained in an $R$-invariant affine open subset of $U$ for every $u\in|U|$.
\end{lemma}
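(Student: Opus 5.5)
The plan is to reduce to the purely topological statement for ordinary schemes, following the classical argument (e.g.\ \cite[Lemma\ 6.2.10]{Ols16} or Conrad's treatment in \cite{Con05}). The statement only concerns the underlying topological spaces $|U|$ and $|R|$ together with affineness of open subsets. Both are insensitive to odd nilpotents: a superscheme is affine iff its bosonic reduction is, since $U_{\operatorname{bos}}\to U$ is a surjective closed immersion defined by a nilpotent ideal locally, and the super analogue of Chevalley/Serre affineness criteria reduces to the bosonic case. Likewise $|U_{\operatorname{bos}}|=|U|$. Moreover $R_{\operatorname{bos}}\rightrightarrows U_{\operatorname{bos}}$ is a finite \'etale groupoid of ordinary schemes: bosonic reduction commutes with fibre products, and \'etale and finite morphisms remain so. Also $U_{\operatorname{bos}}$ is AF because $U$ is. Open subsets of $U$ correspond to open subsets of $U_{\operatorname{bos}}$, and $R$-invariance only depends on $s,t$ on points. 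So it suffices to prove the lemma for ordinary AF schemes and then transport the open set back.

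For the ordinary argument, fix $u\in|U|$. Since $s$ is finite, $s^{-1}(u)$ is finite, so $O(u)=t(s^{-1}(u))$ is a finite set. By the AF hypothesis there is an affine open $V\subseteq U$ containing $O(u)$. Let $Z=U\setminus V$. Since $t$ is finite, hence closed, $s(t^{-1}(Z))$ is closed, because $s$ is also finite. Then $V^\prime=U\setminus s(t^{-1}(Z))$ is the largest $R$-invariant open inside $V$. It contains $O(u)$, because orbits are invariant and disjoint from $Z$. However, $V^\prime$ need not be affine.

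To fix this, use the finite set $O(u)$ inside the affine $V$. Choose $f\in\Gamma(V,\mathcal{O}_V)$ vanishing on the closed set $V\setminus V^\prime$ but not at any point of $O(u)$; this is possible by prime avoidance. Then $V_f\subseteq V^\prime$ is an affine neighbourhood of $O(u)$. Next make it invariant using a norm. Since $s,t$ are finite locally free over $V^\prime$, define $g=\operatorname{Nm}_{s}(t^{*}f)$ on $V^\prime$, after extending $f$ by zero as a section over $V^\prime$ on the complement of $V_f$ (or work with $V_f$ directly). The function $g$ is $R$-invariant, and its non-vanishing locus is $\{x: f \text{ nonzero on all of } O(x)\}$. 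This locus is an invariant open containing $O(u)$, contained in $V_f$, and of the form $(V_f)_{g}$, hence affine.

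The main obstacle is this norm step: making sense of $g$ globally on $V^\prime$, which is not affine, and verifying that its nonvanishing locus is exactly the invariant subset and is affine. I would handle it as in \cite{Con05}. Replace $V$ by $V_f$ from the start, and take $W=$ the invariant open $V_f^\prime\subseteq V_f$. Restricted over $W$, $s$ and $t$ are finite locally free, so $\operatorname{Nm}_s(t^{*}f)$ is defined on $W$. Inside the affine $V_f$ one then shows $W$ equals $(V_f)_{h}$ for a suitable $h$. If this bookkeeping becomes delicate, the fallback is to cite the ordinary lemma verbatim for $R_{\operatorname{bos}}\rightrightarrows U_{\operatorname{bos}}$ and transport the resulting open set back via the homeomorphism $|U_{\operatorname{bos}}|=|U|$.
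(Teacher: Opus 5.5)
Your proposal is correct and has the same skeleton as the paper's proof. Both pass to an ordinary finite \'etale groupoid on the same underlying spaces $|U|$ and $|R|$, apply the classical lemma, and transport the invariant affine open back. The paper simply cites \cite[Lemma 4.3]{Ryd13}, which is your fallback.

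The real difference is which ordinary shadow you use, and this changes which lemma carries the weight.

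\textbf{The paper's route.} It uses the bosonic quotient $R_{\operatorname{ev}}\rightrightarrows U_{\operatorname{ev}}$. Knowing this is still a finite \'etale groupoid requires \Cref{lem:ev_commutes_etale_base_change}. The payoff is that $W=W_{\operatorname{ev}}\times_{U_{\operatorname{ev}}}U$ is then affine for free, because $U\to U_{\operatorname{ev}}$ is affine.

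\textbf{Your route.} You use the bosonic reduction, where the groupoid structure transfers formally: $(-)_{\operatorname{bos}}$ is a right adjoint, and $R_{\operatorname{bos}}=R\times_{s,U}U_{\operatorname{bos}}$ since $s$ is \'etale. The cost is that you owe the fact that a superscheme with affine bosonic reduction is affine, and you only gesture at it. A clean argument: $X_{\operatorname{bos}}\hookrightarrow X_{\operatorname{ev}}$ is a thickening by the locally nilpotent quasi-coherent ideal $\mathcal{O}_{X,1}^2$. So $X_{\operatorname{ev}}$ is affine by the ordinary thickening criterion, and $X\to X_{\operatorname{ev}}$ is affine.

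\textbf{Avoiding that fact.} If you run the norm argument yourself, you do not need it. The output is a principal open of $V_{\operatorname{bos}}$ for some affine open $V\subseteq U$. Lifting the defining function along the surjection $\Gamma(V,\mathcal{O}_V)_0\to\Gamma(V_{\operatorname{bos}},\mathcal{O})$ exhibits $W$ as a principal open of $V$, hence affine.

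\textbf{The norm step.} Your worry about it is unfounded, and extending $f$ by zero is unnecessary. Since $V'$ is invariant, $s^{-1}(V')=t^{-1}(V')\subseteq t^{-1}(V)$. Hence $g=\operatorname{Nm}_s(t^{*}f)$ is already defined on all of $V'$. Its non-vanishing locus is $\{x\in V' : O(x)\subseteq V_f\}$. Because this locus lies inside $V_f$, it equals the principal open $(V_f)_{g|_{V_f}}$ of the affine $V_f$.
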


\begin{proof}
    Let $u\in|U|$. Consider the bosonic quotient $s_{\operatorname{ev}},t_{\operatorname{ev}}\colon R_{\operatorname{ev}}\rightrightarrows U_{\operatorname{ev}}$. Note that $|U|=|U_{\operatorname{ev}}|$ and $|R|=|R_{\operatorname{ev}}|$. Applying \cite[Lemma 4.3]{Ryd13} to $s_{\operatorname{ev}},t_{\operatorname{ev}}\colon R_{\operatorname{ev}}\rightrightarrows U_{\operatorname{ev}}$ yields an $R_{\operatorname{ev}}$-invariant affine open subset $W_{\operatorname{ev}}\subseteq U_{\operatorname{ev}}$ containing the orbit of $u$ in $U_{\operatorname{ev}}$. Let $W=W_{\operatorname{ev}}\times_{U_{\operatorname{ev}}}U$. We see that $W\subseteq U$ is an affine open subsuperscheme containing $O(u)$. It remains to show that $W$ is $R$-invariant. We claim that $s^{-1}(|W|)=t^{-1}(|W|)$ in $|R|$. Indeed, this follows from the fact that $|W|=|W_{\operatorname{ev}}|$ and $W_{\operatorname{ev}}$ is $R_{\operatorname{ev}}$-invariant. Therefore, $W$ is $R$-invariant and the result follows.
\end{proof}

\begin{theorem}\label{thm:quotients_by_finite_flat_supergroupoid}
    Let $s,t\colon R\rightrightarrows U$ be a finite \'etale supergroupoid over a superscheme $S$. If $U$ is an AF superscheme, then it admits a strong geometric quotient $q\colon U\to X$ that is categorical. In particular, $q$ is integral and surjective. If $U$ is locally of finite type over $S$, then $q$ is finite. If $S$ is also locally Noetherian, then $X$ is also locally of finite type over $S$.
\end{theorem}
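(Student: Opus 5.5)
The plan is to reduce to the affine case \Cref{thm:superspace_finite_flat_covering} by covering $U$ with $R$-invariant affine opens, and then to glue the local quotients. By \Cref{lem:AF_superschemes}, every $u\in|U|$ has its orbit $O(u)$ contained in an $R$-invariant affine open $U_u\subseteq U$. Since $U_u$ is invariant, the restriction $R_u=s^{-1}(U_u)=t^{-1}(U_u)$ is a finite \'etale supergroupoid on $U_u$, and it is the base change of $R$ along the open immersion $U_u\to U$. So \Cref{thm:superspace_finite_flat_covering} gives an integral strong geometric quotient $q_u\colon U_u\to X_u=\operatorname{sSpec}(\Gamma(U_u)^{R_u})$, and this quotient is categorical.

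For gluing, take two invariant affines $U_u,U_v$. Their intersection $U_{uv}$ is again $R$-invariant and open in each. The image $q_u(U_{uv})$ is open in $X_u$, because its complement is $q_u(U_u\setminus U_{uv})$. That set is closed since $q_u$ is integral, hence closed, and it is disjoint from $q_u(U_{uv})$ because the fibres of $q_u$ are orbits and both pieces are invariant. Geometric quotients are compatible with flat, in particular open, base change by \Cref{prop:geometric_quotient_uniform_local}. Hence $U_{uv}\to q_u(U_{uv})$ and $U_{uv}\to q_v(U_{uv})$ are both strong geometric quotients that are integral, so both are categorical by \Cref{thm:super_geometric_quotient_descent_categorical}. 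They are therefore canonically isomorphic, and these isomorphisms satisfy the cocycle condition on triple overlaps by the same uniqueness. Gluing the $X_u$ along them yields a superscheme $X$ with a map $q\colon U\to X$ whose restriction over each $X_u$ is $q_u$.

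The properties of $q$ are local on $X$, so they follow from the affine case. Integrality and surjectivity are Zariski-local on the target. Being a strong geometric quotient is local on $X$ by \Cref{prop:geometric_quotient_uniform_local}(2), applied to the Zariski cover $\{X_u\}$. It follows that $q$ is an integral strong geometric quotient, and \Cref{thm:super_geometric_quotient_descent_categorical} then gives categoricity, indeed a GC quotient. If $U$ is locally of finite type over $S$, each $q_u$ is finite by the affine theorem, so $q$ is finite. For local finite type of $X$ over a locally Noetherian $S$, I would refine the cover so that each $U_u$ maps into an affine open $\operatorname{sSpec}(D)\subseteq S$ with $D$ Noetherian. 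This uses the fact that $U_u\cap p^{-1}(\operatorname{sSpec} D)$ is invariant, which holds since $R$ lives over $S$, and then shrinking again using \Cref{lem:AF_superschemes} inside that AF open. The Artin--Tate step of \Cref{thm:superspace_finite_flat_covering} then applies.

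The main obstacle I expect is making the gluing rigorous. Concretely, one must check that $q_u(U_{uv})$ is open and that the overlaps carry compatible quotient structures. I will handle this by relying on the uniqueness of categorical quotients rather than comparing rings of invariants directly. A secondary point is the refinement over the affines of $S$: I need to verify that intersecting with the preimage of an affine of $S$ preserves both invariance and the AF hypothesis.
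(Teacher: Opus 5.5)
Your proposal is correct and follows essentially the paper's argument: an $R$-invariant affine cover from \Cref{lem:AF_superschemes}, the affine case \Cref{thm:superspace_finite_flat_covering}, and gluing by uniqueness of categorical quotients. The differences are only technical: you identify the overlap quotients by open base change plus \Cref{thm:super_geometric_quotient_descent_categorical}, so you never need the overlaps to be affine (the paper uses AF $\Rightarrow$ separated for that), and you get categoricity of the glued $q$ from \Cref{thm:super_geometric_quotient_descent_categorical} rather than the paper's ``standard glueing argument''. Your refinement so that each $U_u$ lies over a single affine $\operatorname{sSpec}(D)\subseteq S$ handles a point the paper glosses over. You should invoke it already for the finiteness claim, since \Cref{thm:superspace_finite_flat_covering} needs $U_u$ of finite type over one $\operatorname{sSpec}(D)$; alternatively, note that $U_u\to X_u$ is locally of finite type because $U_u\to S$ is and factors through $X_u$.
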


\begin{proof}
    Let $u\in |U|$. By \Cref{lem:AF_superschemes}, there exists an $R$-invariant affine open subsuperscheme $W_u$ containing $O(u)$. Therefore we obtain another supergroupoid $s,t\colon R|_{W_u}\rightrightarrows W_u$. By \Cref{thm:superspace_finite_flat_covering}, the restricted supergroupoid admits a strong geometric quotient $q_u\colon W_u\to X_u$ that is also categorical. Since this is true for every $u\in|U|$, we obtain an affine $R$-invariant open cover $\{W_{u}\}_{u\in|u|}$ of $U$. We claim that $\{q_u\colon W_u\to X_u\}_{u\in|u|}$ glue to a strongly geometric quotient $q\colon U\to X$ that is categorical. Indeed, let $W_{uv}=W_u\cap W_v$ for every pair of distinct points $u,v$ of $U$. Note that $U$ is AF and thus separated. It follows that $W_{uv}$ is affine. We may further restrict the supergroupoid $s,t\colon R|_{W_u}\rightrightarrows W_u$ to $s,t\colon R|_{W_{uv}}\rightrightarrows W_{uv}$. By \Cref{thm:superspace_finite_flat_covering}, we get a strong geometric quotient $q_{uv}\colon W_{uv}\to X_{uv}$ that is categorical. Since $W_{uv}\subseteq W_u$ is $R$-invariant and $q_u$ is a topological quotient, its image is open in $X_u$. By universal property, we see that $q_u(W_{uv})=X_{uv}$. Applying the same argument to $W_{vu}$ also gives a strong geometric quotient $q_{vu}\colon W_{vu}\to X_{vu}$ that is categorical, and $q_{v}$ is a topological quotient; we see that $q_v(W_{vu})=X_{vu}$ is open. Apply the universal property one more time yields $X_{uv}\cong X_{vu}$. Since this is true for every pair of points $u,v$ of $U$, the claim follows from a standard glueing argument. The rest of the statements can be checked on the affine open cover $\{X_u\}$ and thus follow from \Cref{thm:superspace_finite_flat_covering}.
\end{proof}

\subsection{Stabilizer-preserving morphisms of superstacks}\label{sec:stab}

Let $\mathcal{X}$ be an algebraic superstack with finite inertia. In this subsection, we produce a representable, \'etale, separated, surjective, and stabilizer-preserving morphism $\mathcal{W}\to\mathcal{X}$ such that $\mathcal{W}$ admits a finite flat presentation from an AF superscheme. The following is a variant of \cite[Proposition\ 6.11]{Ryd13} for superstacks. We say an algebraic superstack $\mathcal{X}$ has a quasi-finite separated presentation if there exists a locally quasi-finite, separated, representable, surjective morphism of superstacks $U\to\mathcal{X}$ where $U$ is a superscheme.

\begin{lemma}\label{lem:Nisnevich_covering_super}
    Let $\mathcal{X}$ be a Deligne--Mumford superstack with separated diagonal. Then there exist morphisms of algebraic superstacks$$V\overset{v}{\longrightarrow}\mathcal{W}\overset{p}{\longrightarrow}\mathcal{X}$$such that $V$ is an AF superscheme; $v$ is finite, \'etale, surjective, and locally of finite presentation; and $p$ is representable, \'etale, and separated.
\end{lemma}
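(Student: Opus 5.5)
Following the proof of \cite[Proposition\ 6.11]{Ryd13}, the plan is to use a Hilbert-type superspace of finite étale ``sub-covers'' of an étale atlas, via the representability result for the Hilbert space of points in the super case (Appendix). Concretely, I would first choose an étale surjective morphism $U\to\mathcal{X}$ from a superscheme $U$ which is a disjoint union of affine superschemes. Then $U$ is AF. Because $\mathcal{X}$ is Deligne--Mumford with separated diagonal, the morphism $U\to\mathcal{X}$ is representable, étale, and separated, since its diagonal is a base change of $\Delta_{\mathcal{X}}$.

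Next, for each $d\geq 1$ I would consider the superstack $\mathcal{W}_d$ parametrizing pairs $(T\to\mathcal{X}, Z\subseteq U\times_{\mathcal{X}}T)$. Here $Z$ is an open and closed subsuperspace that is finite étale of degree $d$ over $T$. This is the étale Hilbert stack $\operatorname{Hilb}^{d}_{\acute{e}t}(U/\mathcal{X})$, which is open in the Hilbert space of $d$ points of $U$ relative to $\mathcal{X}$. By the appendix on representability of Hilbert spaces of points (applied after base change to superschemes over $\mathcal{X}$), the forgetful map $p_d\colon\mathcal{W}_d\to\mathcal{X}$ is representable. It is also étale, because $U\to\mathcal{X}$ is étale and deformations of open--closed finite étale subspaces are unobstructed. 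I can check étaleness after base change to a superscheme, and in fact on bosonic reductions via \Cref{prop:super_une_equivalence} and \Cref{cor:super_same_ET}, since étale objects over $S$ and $S_{\operatorname{bos}}$ agree. Separatedness of $p_d$ follows from separatedness of $U\to\mathcal{X}$. Set $\mathcal{W}=\coprod_d\mathcal{W}_d$ and $p=\coprod p_d$.

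Then I would let $V=\coprod_d V_d$, where $V_d\to\mathcal{W}_d$ is the universal family $\mathcal{Z}_d\subseteq U\times_{\mathcal{X}}\mathcal{W}_d$. The map $v\colon V\to\mathcal{W}$ is finite étale of constant degree $d$ on each piece, hence finite, étale, surjective, and locally of finite presentation. It remains to show that $V$ is an AF superscheme. Here $V_d$ is the open subspace of $U\times_{\mathcal{X}}\operatorname{Hilb}^d_{\acute{e}t}$ of pairs $(u,Z)$ with $u\in Z$. Following Rydh, I would use the isomorphism $V_d\cong \operatorname{Hilb}^{d-1}_{\acute{e}t}$ of the complement of the diagonal in $U\times_{\mathcal{X}}U\to U$. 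This makes $V_d$ étale and separated over $U$, and moreover quasi-affine over $U$ via a symmetric-product description.

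The main obstacle is the AF property together with representability by a superscheme rather than an algebraic superspace. The plan is to reduce to bosonic reductions. Whether a superspace is a superscheme, and the AF property, depend only on $|V|$ and on affineness of opens. By \Cref{prop:super_une_equivalence} and the fact that affineness is detected on the bosonic reduction, I can pass to $V_{\operatorname{bos}}$. The bosonic reduction of the whole construction is the ordinary construction for $\mathcal{X}_{\operatorname{bos}}$ and $U_{\operatorname{bos}}$. There \cite[Proposition\ 6.11]{Ryd13} gives that $V_{\operatorname{bos}}$ is an AF scheme. Lifting AF and schemeness from $V_{\operatorname{bos}}$ to $V$ is the delicate step. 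I would argue it via Chevalley-type statements: an algebraic superspace whose bosonic reduction is an affine scheme is an affine superscheme.
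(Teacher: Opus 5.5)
Your construction of $p\colon\mathcal{W}\to\mathcal{X}$ and $v\colon V\to\mathcal{W}$ is the paper's: the super Hilbert stack of $U/\mathcal{X}$ minus the empty component, with its universal family, via \Cref{cor:SHilb_quasi_finite}. The routes differ only in how you show $V$ is an AF superscheme.

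The paper notes that $V$ is open and closed in $U\times_{\mathcal{X}}\mathcal{W}$, which is separated and \'etale over the AF superscheme $U$, and applies \Cref{prop:superscheme_AF}. That proposition passes to bosonic \emph{quotients}, uses \cite[Proposition B.2]{Ryd13} for $X_{\operatorname{ev}}\to S_{\operatorname{ev}}$, and pulls affine opens back along the affine map $X\to X_{\operatorname{ev}}$ of \Cref{cor:ev_map_integral}.

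You instead pass to bosonic \emph{reductions} and lift back via the claim that an algebraic superspace with affine bosonic reduction is an affine superscheme. That claim is true, but it carries all the weight, and its most direct proof uses the paper's tool anyway. Locally $A_1^2$ is a nil ideal of $A_0$, so $(X_{\operatorname{ev}})_{\operatorname{red}}=(X_{\operatorname{bos}})_{\operatorname{red}}$ is affine. Hence the ordinary algebraic space $X_{\operatorname{ev}}$ is affine, and so is $X$, since $X\to X_{\operatorname{ev}}$ is affine. So your route is sound but not shorter.

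Your route also does not need Rydh's Proposition 6.11, which is an existence statement, so you would really be citing its proof. Nor does it need the identification of the bosonic reduction of the whole construction with the ordinary one. Since $V_d\to U$ is separated and \'etale, $V_{\operatorname{bos}}=V\times_U U_{\operatorname{bos}}$ is separated and \'etale over the AF scheme $U_{\operatorname{bos}}$, and \cite[Proposition B.2]{Ryd13} applies directly.

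Two side remarks in your proposal are inaccurate, though neither is used in the final argument. First, $V_d$ need not be quasi-affine over $U$ when $U\to\mathcal{X}$ is not quasi-compact. Take $\mathcal{X}=\operatorname{Spec}k$, $U$ an infinite disjoint union of copies of $\operatorname{Spec}k$, and $d=2$; then $V_2\cong U\times U\setminus\Delta$ is not quasi-compact over $U$. Only local quasi-affineness is needed for AF. Second, \'etaleness cannot be checked on bosonic reductions, as $\operatorname{sSpec}(k[\theta])\to\operatorname{Spec}k$ shows. \'Etaleness of $p$ should rest on your infinitesimal-lifting argument or on \Cref{prop:SHilb_quasi_finite}.
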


\begin{proof}
    By assumption, we have a separated and \'etale presentation $U\to\mathcal{X}$ where $U$ is a superscheme. By taking a Zariski open covering, we may assume that $U$ is a disjoint union of affine superschemes, which is AF. Since $U\to\mathcal{X}$ is separated and \'etale, we obtain a morphism of algebraic superstacks $\mathcal{H}=\mathcal{SH}\textit{ilb}_{U/\mathcal{X}}\to\mathcal{X}$ that is representable, separated, and \'etale by \Cref{cor:SHilb_quasi_finite}. Since $U\to\mathcal{X}$ is separated and \'etale, a morphism $T\to\mathcal{H}$ corresponds to a closed immersion $i\colon Z\hookrightarrow U\times_{\mathcal{X}}T$ such that the composition $c\colon Z\hookrightarrow U\times_{\mathcal{X}}T\to T$ is finite and \'etale. In particular, $ Z\hookrightarrow U\times_{\mathcal{X}}T$ is an open and closed immersion.
    
    Now let $\mathcal{W}=\mathcal{H}\setminus \mathcal{SH}\textit{ilb}_{U/\mathcal{X}}^{0|0}$. It follows that the restriction $p\colon\mathcal{W}\to\mathcal{X}$ is \'etale. Let $V\to\mathcal{W}$ be the universal family. By construction, the map $v\colon V\to\mathcal{W}$ is finite, locally free of positive rank on all connected components, and thus surjective. Then the composition $v\colon V\hookrightarrow U\times_{\mathcal{X}}\mathcal{W}\to\mathcal{W}$ is finite, \'etale, and finitely presented. Moreover, the composition $\mathcal{W}\to \mathcal{X}$ is \'etale, representable, and separated. It follows that $U\times_\mathcal{X}\mathcal{W}\to U$ is also \'etale, representable, and separated. Since $U$ is AF, we see that $U\times_\mathcal{X}\mathcal{W}$ is AF \Cref{prop:superscheme_AF}. Therefore, $V$ is AF as it is an open and closed subsuperscheme of an AF superscheme. This completes the proof.
\end{proof}

The key ingredient in the proof of \Cref{lem:Nisnevich_covering_super} is the representability of the relative super Hilbert functor for separated \'etale morphisms of superschemes. See \cite[Definition\ 4.1]{BHP23} for the precise definition of the super Hilbert functor. In \Cref{sec:SHilb}, we establish a representability result sufficient for \Cref{lem:Nisnevich_covering_super}. The argument relies on the super Hilbert superspace of even points. In the ordinary case, this is due to Rydh in \cite{Ryd11a}. In the Noetherian case, one may alternatively apply \cite[Theorem\ 4.3]{BHP23} to obtain a much more general representability result of the relative super Hilbert functor for every quasi-affine morphism of superstacks.

We now introduce the following notion of stabilizer-preserving morphisms for superstacks. It is a natural extension of stabilizer-preserving morphisms for ordinary stacks.

\begin{definition}\label{def:stab_preserving}
    A morphism $f\colon\mathcal{X}\to\mathcal{Y}$ of algebraic superstacks is \textit{stabilizer-preserving} if for every point $x$ of $\mathcal{X}$, the induced morphism $G_{x}\cong I_{\mathcal{X}}\times_{\mathcal{X}}\operatorname{Spec}k\to G_{f(x)}\cong I_{\mathcal{Y}}\times_{\mathcal{Y}}\mathcal{X}\times_{\mathcal{X}}\operatorname{Spec}k$ on the stabilizers is an isomorphism for some representative $x\colon\operatorname{Spec}k\to\mathcal{X}$.
\end{definition}

Stabilizer-preserving morphisms of superstacks are closely related to stabilizer-preserving morphisms of supergroupoids. Let $f\colon \mathcal{X}\to\mathcal{Y}$ be a representable morphism of algebraic superstacks. Choose a smooth presentation $U_\mathcal{Y}\to\mathcal{Y}$. Set $U_\mathcal{X}=U_\mathcal{Y}\times_Y\mathcal{X}$, $R_\mathcal{X}=U_\mathcal{X}\times_\mathcal{X}U_\mathcal{X}$ and $R_\mathcal{Y}=U_\mathcal{Y}\times_\mathcal{Y}U_\mathcal{Y}$. Consider the supergroupoids $(R_\mathcal{X}, U_\mathcal{X})$ and $(R_\mathcal{Y}, U_\mathcal{Y})$. The representable morphism $f$ then induces a square morphism $(R_\mathcal{X}, U_\mathcal{X})\to(R_\mathcal{Y}, U_\mathcal{Y})$ of supergroupoids. In this case, the morphism $f$ is stabilizer-preserving if and only if $(R_\mathcal{X}, U_\mathcal{X})\to(R_\mathcal{Y}, U_\mathcal{Y})$ is so as morphism of supergroupoids. 

\begin{proposition}\label{prop:stabilizer_preserving}
    In this situation of \Cref{lem:Nisnevich_covering_super}, if $\mathcal{X}$ has finite inertia, then it can be arranged that $p$ is stabilizer-preserving and surjective.
\end{proposition}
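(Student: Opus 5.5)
We follow Rydh's proof of \cite[Proposition\ 6.11]{Ryd13}: pass to the open substack of $\mathcal{W}$ on which $p$ is stabilizer-preserving, then check that this substack still surjects onto $\mathcal{X}$ and keeps the structure from \Cref{lem:Nisnevich_covering_super}.

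\textbf{Step 1: the stabilizer-preserving locus is open.} Since $p\colon\mathcal{W}\to\mathcal{X}$ is representable, the induced morphism of inertia $\mathcal{I}_{\mathcal{W}}\to\mathcal{I}_{\mathcal{X}}\times_{\mathcal{X}}\mathcal{W}$ is a monomorphism. It is also a closed immersion: it is a base change of the diagonal of $p$, which is a closed immersion because $p$ is separated and representable. Because $p$ is \'etale, this closed immersion is moreover \'etale, hence open, i.e.\ an open and closed immersion. By hypothesis $\mathcal{I}_{\mathcal{X}}\times_{\mathcal{X}}\mathcal{W}\to\mathcal{W}$ is finite, and it is unramified since $\mathcal{X}$ is Deligne--Mumford (\Cref{prop:DM_diagonal}). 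Let $Z$ be the complement of $\mathcal{I}_{\mathcal{W}}$ in $\mathcal{I}_{\mathcal{X}}\times_{\mathcal{X}}\mathcal{W}$; it is closed. The locus where $p$ is stabilizer-preserving is exactly the complement of the image of $Z$ in $\mathcal{W}$, and this image is closed because the projection is finite. Both openness and the identification with stabilizers can be checked on bosonic reductions or on underlying topological spaces, since $|\mathcal{W}|=|\mathcal{W}_{\operatorname{bos}}|$. Stabilizers are compared as group superspaces, but a closed immersion that is also \'etale and surjective on a fibre is an isomorphism there, so the pointwise comparison suffices.

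\textbf{Step 2: shrink $\mathcal{W}$.} Let $\mathcal{W}^{\circ}\subseteq\mathcal{W}$ be this open locus and set $V^{\circ}=v^{-1}(\mathcal{W}^{\circ})$. Restricting $v$ over an open substack preserves finite, \'etale, surjective and finitely presented, and $p|_{\mathcal{W}^{\circ}}$ stays representable, \'etale and separated. For $V^{\circ}$ to be AF we need a bit more than openness, since an open subscheme of an AF scheme need not be AF. We argue as follows: $V^{\circ}$ is open and closed in $U\times_{\mathcal{X}}\mathcal{W}^{\circ}$, and $U\times_{\mathcal{X}}\mathcal{W}^{\circ}\to U$ is \'etale, separated and representable, so \Cref{prop:superscheme_AF} applies exactly as in the proof of \Cref{lem:Nisnevich_covering_super}.

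\textbf{Step 3 (main obstacle): surjectivity of $p|_{\mathcal{W}^{\circ}}$.} Given a point $x\colon\operatorname{Spec}k\to\mathcal{X}$ with $k$ separably closed, we must produce a point of $\mathcal{H}=\mathcal{SH}\textit{ilb}_{U/\mathcal{X}}$ over $x$ that is nonempty and stabilizer-preserving. The fibre $U_x=U\times_{\mathcal{X}}\operatorname{Spec}k$ is \'etale over $k$ and carries an action of $G_x$. Finite inertia makes $G_x$ finite; since $\mathcal{X}$ is DM, $G_x$ is \'etale over $k$, and over separably closed $k$ it is therefore a constant finite group. Choose a $k$-point $u\in U_x$ and let $Z=G_x\cdot u$. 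This orbit is a finite nonempty closed (hence open and closed) subscheme of $U_x$, so it defines a $k$-point $w$ of $\mathcal{W}$ over $x$. The stabilizer of $w$ is the subgroup of $G_x$ preserving $Z$, which is all of $G_x$ because $Z$ is an orbit. Hence $G_w\to G_x$ is an isomorphism and $w\in\mathcal{W}^{\circ}$.

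The delicate point is that stabilizers in the super setting may have odd directions. We handle this using étaleness: the inertia of a DM superstack is unramified, so its fibre over a field point is a reduced \'etale group, equal to its bosonic reduction. The Hilbert-point description of $\mathcal{H}$ is the one given by \Cref{cor:SHilb_quasi_finite}. Checking that the stabilizer of $w$ is computed correctly through this Hilbert description is the step I expect to require the most care.
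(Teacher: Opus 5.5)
Your proposal is correct and follows the paper's proof. Openness of the stabilizer-preserving locus comes from $\mathcal{I}_{\mathcal{W}}\hookrightarrow\mathcal{I}_{\mathcal{X}}\times_{\mathcal{X}}\mathcal{W}$ being an open and closed immersion together with finiteness of the projection to $\mathcal{W}$, and surjectivity comes from taking a $G_x$-orbit in $U_x$ as a Hilbert point whose stabilizer is all of $G_x$. Your check that $V^{\circ}$ stays AF is a detail the paper omits, though contrary to your remark it is automatic: open subsuperschemes of AF superschemes are AF by prime avoidance.
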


\begin{proof}
We claim that the locus in $\mathcal{W}$ where $p$ is stabilizer-preserving is open. Let $\mathcal{I}_{\mathcal{X}}$ and $\mathcal{I}_{\mathcal{W}}$ be the inertia stacks of $\mathcal{X}$ and $\mathcal{W}$ respectively. Since $\mathcal{I}_{\mathcal{X}}\to\mathcal{X}$ is finite, so is the base change $p_{2}\colon\mathcal{I}_{\mathcal{X}}\times_{\mathcal{X}}\mathcal{W}\to\mathcal{W}$. The universal property of fibre products gives us a morphism $\mathcal{I}_{\mathcal{W}}\to\mathcal{I}_{\mathcal{X}}\times_{\mathcal{X}}\mathcal{W}$. The fibre of this morphism at a point $z\in|\mathcal{W}|$ is precisely the induced map $G_{z}\to G_{p(z)}$ on the stabilizers. Consider the following Cartesian diagram
    \begin{equation}
        \begin{tikzcd}
            \mathcal{I}_{\mathcal{W}}\arrow[r]\arrow[d] & \mathcal{I}_{\mathcal{X}}\times_{\mathcal{X}}\mathcal{W}\arrow[d]\\
            \mathcal{W}\arrow[r] & \mathcal{W}\times_{\mathcal{X}}\mathcal{W}.
        \end{tikzcd}
    \end{equation}
    Since $p$ is representable, \'etale, and separated, its diagonal is an open and closed immersion. It follows that $\mathcal{I}_{\mathcal{W}}$ is an open and closed subsuperstack of $\mathcal{I}_{\mathcal{X}}\times_{\mathcal{X}}\mathcal{W}$. Set $\mathcal{Z}=|\mathcal{I}_{\mathcal{X}}\times_{\mathcal{X}}\mathcal{W}|\setminus|\mathcal{I}_{\mathcal{W}}|$. Since  $p_{2}\colon\mathcal{I}_{\mathcal{X}}\times_{\mathcal{X}}\mathcal{W}\to\mathcal{W}$ is finite, we see that $p_{2}(\mathcal{Z})$ is closed. It follows that locus in $\mathcal{W}$ where $p$ is stabilizer-preserving, which is precisely the complement of $p_{2}(\mathcal{Z})$, is open in $\mathcal{W}$. Therefore we may replace $\mathcal{W}$ by $\mathcal{W}\setminus p_{2}(\mathcal{Z})$.

    Let $x\colon\operatorname{Spec}k\to\mathcal{X}$ where $k$ is algebraically closed. We now show that $x$ lifts to a point in the stabilizer-preserving locus of $p$ in $|\mathcal{W}|$. Note that the stabilizer $G_{x}$ acts on the fibre $U_{x}$. By assumption, $U_{x}$ is discrete and $G_{x}$ is finite. It follows that the orbit $Z$ of any point of $|U_{x}|$ is an open subscheme of $U_{x}$ that is invariant under the action of $G_{x}$ and finite over $k$. By construction, the isomorphism class of $Z$ corresponds to a $k$-point of $\mathcal{W}$ that is in the stabilizer-preserving locus of $p$, and the result follows.
\end{proof}

Note that \Cref{prop:stabilizer_preserving} is the reason why the finiteness assumption on the inertia stack is necessary in the main theorem.

\subsection{Coarse moduli superspaces}\label{sec:cmss}

We are now ready to prove \Cref{thm:main}. We first recall the definition of coarse moduli spaces.

\begin{definition}\label{def:super_cms}
    Let $\pi\colon\mathcal{X}\to X$ be a morphism from an algebraic superstack to an algebraic superspace. We say $\pi$ is a coarse moduli superspace if
    \begin{enumerate}[topsep=0pt,noitemsep,label=\normalfont(\arabic*)]
        \item\label{def:super_cms1} $\mathcal{O}_X\cong\pi_{*}\mathcal{O}_\mathcal{X}$,
        \item\label{def:super_cms2} $\pi$ is a universal homeomorphism,
        \item\label{def:super_cms3} $\Delta_{\pi}$ is universally submersive, and
        \item\label{def:super_cms4} $\pi$ is initial among morphisms to algebraic superspaces.
    \end{enumerate}
\end{definition}

If $\mathcal{X}$ is an algebraic superspace, for example, then the associated coarse moduli superspace is the identity morphism $\mathcal{X}\to\mathcal{X}$. Let $\mathcal{X}$ be an algebraic superstack with a flat presentation $U\to\mathcal{X}$. Let $R=U\times_\mathcal{X}U$. A morphism $\mathcal{X}\to X$ to an algebraic superspace is precisely a morphism $U\to X$ of algebraic superspaces that is equivariant under the action of $R$. Therefore, $\mathcal{X}\to X$ is a coarse moduli superspace if and only if the composition $U\to\mathcal{X}\to X$ is a strongly geometric quotient that is also categorical. We can now translate our results for supergroupoid quotients in the language of superstacks.  

\begin{theorem}\label{thm:superstacky_AF_mod_finite}
    Let $S$ be a superscheme. Let $\mathcal{X}$ be an algebraic superstack over $S$. If $\mathcal{X}$ admits a finite \'etale presentation $U\to\mathcal{X}$ where $U$ is an AF superscheme, then $\mathcal{X}$ admits a coarse moduli superspace $\pi\colon\mathcal{X}\to X$ such that $\pi$ is separated and the composition $U\to\mathcal{X}\to X$ is integral. If $\mathcal{X}$ is locally of finite type over $S$, then $\pi$ is proper and quasi-finite. If $S$ is also locally Noetherian, then $X$ is also locally of finite type over $S$.
\end{theorem}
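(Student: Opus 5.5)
Given the finite étale presentation $U\to\mathcal{X}$ with $U$ an AF superscheme, set $R=U\times_{\mathcal{X}}U$. The plan is to take the resulting supergroupoid quotient and then translate its properties into statements about $\pi$.

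\textbf{Step 1: the quotient.} The two projections $s,t\colon R\rightrightarrows U$ are base changes of $U\to\mathcal{X}$, so they are finite and étale. Thus $s,t\colon R\rightrightarrows U$ is a finite étale supergroupoid over $S$ with $\mathcal{X}\simeq[U/R]$. By \Cref{thm:quotients_by_finite_flat_supergroupoid}, it admits a strong geometric quotient $q\colon U\to X$ that is categorical, integral and surjective. Since $q$ is $R$-equivariant, it descends to $\pi\colon\mathcal{X}\to X$.

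\textbf{Step 2: $\pi$ is a coarse moduli superspace.} By the remark preceding the theorem, $\pi$ is a coarse moduli superspace as soon as $U\to\mathcal{X}\to X$ is a strongly geometric, categorical quotient, which Step 1 provides. To be safe I would still check the conditions of \Cref{def:super_cms} directly.
\begin{enumerate}
\item $\mathcal{O}_X\cong\pi_*\mathcal{O}_{\mathcal{X}}$ is the invariant-sections statement $\mathcal{O}_X\cong(q_*\mathcal{O}_U)^R$ from the geometric quotient.
\item For the universal homeomorphism property: $\pi$ is universally closed because $q$ is integral and $U\to\mathcal{X}$ is surjective. It is universally bijective on points because $q$ is a universal topological quotient whose fibres are orbits. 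This last fact can be checked on bosonic quotients, as in the proof of \Cref{thm:superspace_finite_flat_covering}.
\item $\Delta_\pi$ is universally submersive because $j\colon R\to U\times_XU$ is universally submersive and surjects onto $\mathcal{X}\times_X\mathcal{X}$ after pullback to $U\times_X U$.
\item $\pi$ is initial among maps to algebraic superspaces by categoricity of $q$.
\end{enumerate}
The composition $U\to X$ is integral by construction.

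\textbf{Step 3: separatedness of $\pi$.} The diagonal of $\pi$ is checked after base change along the flat cover $U\times_XU\to\mathcal{X}\times_X\mathcal{X}$. There it becomes $R\to U\times_XU$, which is finite, since $s$ is finite and $U\times_XU$ is affine over $U$ (as $q$ is affine). A finite diagonal is universally closed, hence $\pi$ is separated. I expect this to be the main point needing care: one must make sure the separatedness notion for representable diagonals of superstacks is checked correctly flat-locally, via the conventions of \cite{BH25}.

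\textbf{Step 4: finite type cases.} If $\mathcal{X}$ is locally of finite type over $S$, then so is $U$, and \Cref{thm:quotients_by_finite_flat_supergroupoid} gives that $q$ is finite. Then $\pi$ is locally of finite type, since this can be checked after composing with the smooth cover: $q=\pi\circ(U\to\mathcal{X})$ is of finite type, and finite type of $\pi$ follows by descent along the flat presentation. Moreover $\pi$ is quasi-compact, since $U\to\mathcal{X}$ is surjective and $q$ is quasi-compact. Together with separatedness and universal closedness, $\pi$ is proper. It is quasi-finite because it is universally bijective. Finally, if $S$ is locally Noetherian, \Cref{thm:quotients_by_finite_flat_supergroupoid} gives that $X$ is locally of finite type over $S$.
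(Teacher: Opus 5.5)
Your proposal is correct and follows the paper's proof essentially step for step. You take the quotient from \Cref{thm:quotients_by_finite_flat_supergroupoid}, read off the coarse moduli conditions from the strong geometric categorical quotient, get separatedness from finiteness of $R\to U\times_X U$ (since $s$ is finite and $U\times_X U\to U$ is separated because $q$ is integral), and obtain the finite-type statements from the same theorem, while supplying more detail than the paper on universal closedness, universal submersiveness of $\Delta_\pi$, and properness and quasi-finiteness of $\pi$.
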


\begin{proof}
    By \Cref{thm:quotients_by_finite_flat_supergroupoid}, we get a strongly geometric quotient $U\to X$ that is categorical. We need to check that this descends to a morphism $\pi\colon\mathcal{X}\to X$ satisfying the first three conditions in \Cref{def:super_cms}. \ref{def:super_cms1} follows from the fact that $U\to X$ is a geometric quotient. \ref{def:super_cms2} and \ref{def:super_cms3} follow from the fact that $U\to X$ is a strong topological quotient. To see that $\pi$ is separated, we have the following Cartesian diagram
    \begin{equation*}
        \begin{tikzcd}
            R=U\times_\mathcal{X}U\arrow[r]\arrow[d]\arrow[rd,phantom,"\square"]\arrow[rr,bend left=30] & U\times_XU\arrow[d]\arrow[r] & U\\
            \mathcal{X}\arrow[r,"\Delta_{\pi}"] & \mathcal{X}\times_X\mathcal{X}. &
        \end{tikzcd}
    \end{equation*}
    Note that $R\to U$ is finite and \'etale since $U\to\mathcal{X}$ is so. Moreover, the projection $U\times_X U\to U$ is separated. This is because the composition $U\to\mathcal{X}\to X$ is integral and thus separated. It follows that $R\to U\times_XU$ is finite. Since the diagram above is Cartesian, we see that $\Delta_\pi$ is finite and $\pi$ is separated. If $\mathcal{X}$ is locally of finite type over $S$, then $U\to X$ is finite by \Cref{thm:quotients_by_finite_flat_supergroupoid}. Therefore $\pi$ is quasi-finite and proper. If $S$ is also locally Noetherian, then $U$ and $X$ are also locally of finite type by \Cref{thm:quotients_by_finite_flat_supergroupoid}.
\end{proof}

Let $\pi\colon\mathcal{X}\to X$ be a coarse moduli superspace. We say that $\pi$ \textit{satisfies the descent condition for separated and \'etale morphisms} if for every representable, separated, \'etale, stabilizer-preserving morphism of superstacks $\mathcal{W}\to\mathcal{X}$, there exists a separated and \'etale morphism $W\to X$ of algebraic superspaces such that the diagram
    \begin{equation*}
        \begin{tikzcd}
            \mathcal{W}\arrow[r]\arrow[d]\arrow[rd, phantom,"\square"] & \mathcal{X}\arrow[d]\\
            W\arrow[r] & X
        \end{tikzcd}
    \end{equation*}
is Cartesian.

\begin{example}\label{ex:cmss_descent_conditions}
    Let $\pi\colon\mathcal{X}\to X$ be a coarse moduli superspace. If there exists a finite \'etale presentation $U\to\mathcal{X}$ such that $U$ is AF and the composition $U\to\mathcal{X}\to X$ is integral, then $\pi$ satisfies the descent condition for separated and \'etale morphisms. Indeed, we have a supergroupoid $(U,R)$ where $R=U\times_\mathcal{X}U$ such that the composition $U\to\mathcal{X}\to X$ is a strong geometric quotient. By \Cref{thm:super_geometric_quotient_descent_categorical}, we see that $U\to X$ satisfies the descent condition for separated and \'etale morphisms.

    Let $\mathcal{W}\to\mathcal{X}$ be a representable, separated, \'etale and stabilizer-preserving morphism of algebraic superstacks. Then there exists an induced morphism of supergroupoids $(R_\mathcal{W},U_\mathcal{W})\to(R,U)$ where $U_\mathcal{W}=U\times_\mathcal{X}\mathcal{W}$ and $R_\mathcal{W}=U_\mathcal{W}\times_\mathcal{W}U_\mathcal{W}$. Since $U\to X$ satisfies the descent condition for separated and \'etale morphisms, the diagram
    \begin{equation}\label{diag:descent_condition_def_2}
        \begin{tikzcd}
            U_\mathcal{W}\arrow[r]\arrow[d]\arrow[rd, phantom,"\square"] & U\arrow[d]\\
            W\arrow[r] & X
        \end{tikzcd}
    \end{equation}
    is Cartesian. Since $U_\mathcal{W}\cong U\times_\mathcal{X}\mathcal{W}$ and $R_\mathcal{W}\cong R\times_\mathcal{X}\mathcal{W}$, it follows that$$\mathcal{W}\cong[U_\mathcal{W}/R_\mathcal{W}]\cong[U/R]\times_{X}W$$as desired.
\end{example}

\begin{theorem}\label{thm:superstacky_Kollar_Rydh}
    Let $S$ be a superscheme. Let $f\colon\mathcal{W}\to\mathcal{X}$ be a representable, separated, \'etale, stabilizer-preserving, and surjective morphism of algebraic superstacks. Let $\mathcal{Q}=\mathcal{W}\times_\mathcal{X}\mathcal{W}$. Let $V\to\mathcal{W}$ be a finite \'etale presentation from a superscheme. If $\mathcal{W}$ admits a coarse moduli superspace that satisfies the descent condition for separated and \'etale morphisms, then the coarse moduli superspaces $\mathcal{Q}\to Q$ and $\mathcal{X}\to X$ exist such that the diagram
    \begin{equation}\label{dia:superstacky_Kollar_Rydh}
        \begin{tikzcd}
            \mathcal{Q}\arrow[r, shift left=1]\arrow[r, shift right=1]\arrow[d] & \mathcal{W}\arrow[r]\arrow[d] & \mathcal{X}\arrow[d]\\
            Q\arrow[r, shift left=1]\arrow[r, shift right=1] & W\arrow[r] & X
        \end{tikzcd}
    \end{equation}
    is Cartesian.
\end{theorem}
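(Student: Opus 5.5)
The plan is to translate the statement into the language of supergroupoids and then apply \Cref{thm:super_Kollar_Rydh}, mirroring how \Cref{thm:superstacky_AF_mod_finite} was obtained from \Cref{thm:quotients_by_finite_flat_supergroupoid}.

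\emph{Step 1: set up presentations.} Take the given finite \'etale presentation $V\to\mathcal{W}$. Since $f$ is surjective, representable, separated and \'etale, the composition $V\to\mathcal{W}\to\mathcal{X}$ is an \'etale presentation of $\mathcal{X}$. It is not finite, so I will not use it directly. Instead, choose an \'etale presentation $U\to\mathcal{X}$ from a superscheme. Set $U_\mathcal{W}=U\times_\mathcal{X}\mathcal{W}$ and $U_\mathcal{Q}=U\times_\mathcal{X}\mathcal{Q}$. These are algebraic superspaces because $f$ is representable. Put $R=U\times_\mathcal{X}U$, $R_\mathcal{W}=R\times_\mathcal{X}\mathcal{W}$, and similarly $R_\mathcal{Q}$. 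As explained before \Cref{prop:stabilizer_preserving}, $f$ then induces a square, separated, \'etale, surjective, stabilizer-preserving morphism of supergroupoids $(R_\mathcal{W},U_\mathcal{W})\to(R,U)$. Moreover, $U_\mathcal{Q}=U_\mathcal{W}\times_U U_\mathcal{W}$.

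\emph{Step 2: the quotient of $U_\mathcal{W}$.} Let $\mathcal{W}\to W$ be the given coarse moduli superspace, which satisfies the descent condition. The composition $U_\mathcal{W}\to\mathcal{W}\to W$ is a strong geometric quotient that is categorical, by the remark following \Cref{def:super_cms}. The main obstacle is that \Cref{thm:super_Kollar_Rydh} requires this quotient to be integral, or more precisely only that it be a GC quotient. That is exactly what the proof of \Cref{thm:super_Kollar_Rydh} uses: it derives GC-ness from integrality and afterwards relies only on the descent condition.

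I would therefore rerun that proof with the hypothesis ``integral'' replaced by ``satisfies the descent condition for separated \'etale morphisms'', as stated at the level of stacks. Any separated, \'etale, square, stabilizer-preserving morphism of supergroupoids into $(R_\mathcal{W},U_\mathcal{W})$ comes from a representable, separated, \'etale, stabilizer-preserving $\mathcal{W}'\to\mathcal{W}$. Such a morphism descends by hypothesis, exactly as in \Cref{ex:cmss_descent_conditions}.

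\emph{Step 3: build $Q$ and $X$.} Applying the descent condition to the two projections $\mathcal{Q}\rightrightarrows\mathcal{W}$ yields $Q\to W$ twice, with Cartesian squares. These two candidates agree because each is a categorical coarse space of $\mathcal{Q}$. In particular $\mathcal{Q}\to Q$ is a coarse moduli superspace, being a base change of $\mathcal{W}\to W$ along the separated \'etale map $Q\to W$; here I use \Cref{prop:geometric_quotient_uniform_local} and the stability of universal homeomorphisms.

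As in \Cref{thm:super_Kollar_Rydh}, the diagonal, the swap, and the composition descend and make $Q\rightrightarrows W$ a supergroupoid. The monomorphism argument uses stabilizer preservation and the descent property, so $Q\rightrightarrows W$ is an \'etale equivalence relation. Let $X=W/Q$. The map $\mathcal{W}\to W\to X$ is $\mathcal{Q}$-equivariant, so it descends to $\mathcal{X}\to X$. \'Etale descent along $\mathcal{W}\to\mathcal{X}$ gives $\mathcal{W}\cong\mathcal{X}\times_X W$, so all squares in (\ref{dia:superstacky_Kollar_Rydh}) are Cartesian.

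\emph{Step 4: $\mathcal{X}\to X$ is a coarse moduli superspace.} Conditions \ref{def:super_cms1}--\ref{def:super_cms3} of \Cref{def:super_cms} are fpqc-local on $X$, so they follow from $\mathcal{W}\to W$ via \Cref{prop:geometric_quotient_uniform_local} and the \'etale cover $W\to X$.

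For \ref{def:super_cms4}, take a morphism $\mathcal{X}\to Y$ to an algebraic superspace. Composing gives $\mathcal{W}\to Y$ and $\mathcal{Q}\to Y$, which factor uniquely through $W$ and $Q$ by the universal property of those coarse spaces. The resulting map $W\to Y$ is $Q$-equivariant, so it descends to $X\to Y$. Uniqueness follows from the surjectivity of $W\to X$ together with uniqueness on $W$.
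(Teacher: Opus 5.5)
Your overall route is the paper's: descend the projections $\mathcal{Q}\rightrightarrows\mathcal{W}$ via the descent condition, use stabilizer preservation to see that $Q\rightrightarrows W$ is an étale equivalence relation, set $X=W/Q$, and descend. Your Steps 1--2, which pass to a presentation of $\mathcal{X}$, are never used later.

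There is, however, a load-bearing step that is not justified. In Step 3 you assert that $\mathcal{Q}\to Q$ is a coarse moduli superspace because it is the base change of $\mathcal{W}\to W$ along the separated étale map $Q\to W$, citing \Cref{prop:geometric_quotient_uniform_local} and stability of universal homeomorphisms. That gives conditions \ref{def:super_cms1}--\ref{def:super_cms3} of \Cref{def:super_cms}, but not \ref{def:super_cms4}. Initiality is not transported by flat base change; this is exactly why GC quotients require the descent condition \emph{uniformly}. The hypothesis gives the descent condition only for $\mathcal{W}\to W$ itself. You then use this categoricity in three places: to identify the two candidates for $Q$ coming from $p_1$ and $p_2$, to descend the diagonal, swap and composition, and in Step 4. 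Relatedly, your claim in Step 2 that the proof of \Cref{thm:super_Kollar_Rydh} ``afterwards relies only on the descent condition'' is inaccurate. That proof re-invokes integrality to make $Q\to Z_Q$ and $X\to Z_X$ GC, and it uses effective $\mathbf{\acute{E}t}_{\operatorname{sep}}$-descent along $W\to Z_W$ in the monomorphism step.

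The gap can be closed as follows. First check that the descent condition passes to $\mathcal{W}'=\mathcal{W}\times_W W'\to W'$ for every separated étale $W'\to W$.
\begin{itemize}
\item Let $\mathcal{V}\to\mathcal{W}'$ be representable, separated, étale and stabilizer-preserving. Then so is $\mathcal{V}\to\mathcal{W}$, since $\mathcal{W}'\to\mathcal{W}$ is a base change of a map of superspaces. Hence $\mathcal{V}\cong\mathcal{W}\times_W V$ for some separated étale $V\to W$.
\item The given map $\mathcal{V}\to\mathcal{W}'$ is a section of $\mathcal{V}\times_V(V\times_W W')\to\mathcal{V}$, so its image is open and closed.
\item Through the universal homeomorphism onto $V\times_W W'$, this image comes from an open and closed $Z\subseteq V\times_W W'$. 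Since $Z$ becomes an isomorphism over $\mathcal{V}$, it is étale and universally bijective over $V$, hence a section. This gives $V\to W'$ with $\mathcal{V}\cong\mathcal{W}'\times_{W'}V$.
\end{itemize}
The argument of \Cref{lem:geometric_quotients_descent_categorical} then makes $\mathcal{Q}\to Q$ and $\mathcal{Q}\times_{\mathcal{W}}\mathcal{Q}\to Q\times_W Q$ categorical. In the application to \Cref{thm:main} you can instead use that $V\to W$ is integral. Then $V\times_W Q\to Q$ is an integral strong geometric quotient, and \Cref{thm:super_geometric_quotient_descent_categorical} applies directly.

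The monomorphism step needs no effective descent. The descended diagonal $W\to Q$ is a section of the separated étale map $p_1$, hence open and closed. So it suffices to check on geometric points that a point of $Q$ with equal images in $W$ lies in it. This follows from universal injectivity of $\mathcal{W}\to W$ and stabilizer preservation, as in the paper's argument.

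The paper's own proof is equally terse on these points. With these additions, your Step 4 gives a more explicit verification of categoricity of $\mathcal{X}\to X$ than the paper does.
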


\begin{proof}
    We use the same gluing argument in the proof of \Cref{thm:super_Kollar_Rydh}. Apply the descent condition to the projections $\mathcal{Q}\rightrightarrows\mathcal{W}$. The resulting coarse superspaces form an \'etale supergroupoid $Q\rightrightarrows W$. Since the projections $\mathcal{Q}\rightrightarrows\mathcal{W}$ are stabilizer-preserving, we see that $Q\rightrightarrows W$ defines an \'etale equivalence relation. It follows that $W/Q$ is an algebraic superspace. Let $X=W/Q$. By \'etale descent, we have the desired Cartesian diagram in \ref{dia:superstacky_Kollar_Rydh}, and the induced maps $\mathcal{Q}\to Q$ and $\mathcal{X}\to X$ are coarse moduli superspaces.
\end{proof}

\begin{proof}[Proof of \Cref{thm:main}]
    Since $\mathcal{X}$ is Deligne--Mumford, it admits an \'etale presentation $U\to\mathcal{X}$ from a superscheme $U$. Since $\mathcal{X}$ has finite inertia, it has separated diagonal. Applying \Cref{lem:Nisnevich_covering_super} and \Cref{prop:stabilizer_preserving}, we get an \'etale, separated, surjective, representable, and stabilizer-preserving morphism $\mathcal{W}\to\mathcal{X}$ such that $\mathcal{W}$ admits a finite \'etale presentation $V\to\mathcal{W}$ where $V$ is an AF superscheme. By \Cref{thm:superstacky_AF_mod_finite}, there exists a coarse moduli superspace $\mathcal{W}\to W$ such that $V\to W$ is integral. In particular, $\mathcal{W}\to W$ satisfies the descent condition for separated and \'etale morphisms by \Cref{ex:cmss_descent_conditions}. Let $\mathcal{Q}=\mathcal{W}\times_\mathcal{X}\mathcal{W}$. Therefore by \Cref{thm:superstacky_Kollar_Rydh}, there exists a coarse moduli superspace $\pi\colon\mathcal{X}\to X$ such that the diagram
    \begin{equation}
        \begin{tikzcd}
            \mathcal{Q}\arrow[r, shift left=1]\arrow[r, shift right=1]\arrow[d] & \mathcal{W}\arrow[r]\arrow[d] & \mathcal{X}\arrow[d,"\pi"]\\
            Q\arrow[r, shift left=1]\arrow[r, shift right=1] & W\arrow[r] & X
        \end{tikzcd}
    \end{equation}
    is Cartesian and $\pi$ is separated.
    
    If $\mathcal{X}\to S$ is locally of finite type, then \Cref{thm:superstacky_AF_mod_finite} implies that $\mathcal{W}\to W$ is quasi-finite and proper. Since $\mathcal{W}\to\mathcal{X}$ is \'etale and surjective, and the right square above is Cartesian, we see that $\pi$ is quasi-finite and proper by descent. Quasi-compactness and universal openness and closedness follow from the fact that $\pi$ is a universal homeomorphism. Consider the following diagram
    \begin{equation*}
        \begin{tikzcd}
            \mathcal{X}\arrow[r,"\Delta_{\pi}",swap]\arrow[rr,"\Delta_{\mathcal{X}}",bend left=30] & \mathcal{X}\times_X\mathcal{X}\arrow[r,hook]\arrow[d] & \mathcal{X}\times_S\mathcal{X}\arrow[d,"\pi\times\pi"]\\
            & X\arrow[r,"\Delta_{X}",hook] & X\times_S X
        \end{tikzcd}
    \end{equation*}
    with a Cartesian square. Since $\pi$ is separated, $\Delta_\pi$ is proper. If $\mathcal{X}$ is quasi-separated (resp. separated), then the diagonal $\Delta_\mathcal{X}$ is quasi-compact (resp. proper).  So is the morphism $\mathcal{X}\times_X\mathcal{X}\to\mathcal{X}\times_S\mathcal{X}$. It follows that $\Delta_{X}$ is quasi-compact (resp. a closed immersion) since $\pi\times\pi$ is quasi-compact, universally closed and surjective.

    Now suppose $S$ is locally Noetherian. It suffices to show that if $\mathcal{X}\to S$ is locally of finite type, then $X\to S$ is also locally of finite type. By assumption, $\mathcal{W}\to S$ is locally of finite type. By \Cref{thm:superstacky_AF_mod_finite}, we see that $W\to S$ is locally of finite type. By construction, the map $W\to X$ is \'etale and surjective. It follows that $X\to S$ is locally of finite type. This finishes the proof.
\end{proof}

As remarked in the introduction, not every algebraic superstack with finite inertia admits a coarse moduli superspace. We end this section with the following counterexample.

\begin{example}\label{ex:counter_Keel_Mori}
    Consider the action in \Cref{ex:G_a^-action_not_integral}. Let $\mathcal{X}$ be the corresponding quotient superstack $[\mathbb{A}^{1|1}/\mathbb{G}_a^{0|1}]$ over $\mathbb{C}$. Since $\mathbb{A}^{1|1}$ is an affine superscheme and $\mathbb{G}_a^{0|1}$ is a finite flat group superscheme over $\mathbb{C}$, the quotient superstack $\mathcal{X}$ has finite diagonal, and thus finite inertia. We claim that $\mathcal{X}$ admits no coarse moduli superspace.
    
    Suppose that there exists a coarse moduli superspace $\pi\colon\mathcal{X}\to X$. We first claim that $X$ is an affine superscheme. Indeed, one checks that the composition$$\mathbb{A}^1\hookrightarrow\mathbb{A}^{1|1}\to\mathcal{X}\to X\to X_{\operatorname{ev}}$$is universally closed and surjective. Since $\mathbb{A}^{1}$ is affine and $X_{\operatorname{ev}}$ is an ordinary algebraic space (see \Cref{sec:AF}), we see that $X_{\operatorname{ev}}$ is affine by \cite[\href{https://stacks.math.columbia.edu/tag/07VT}{Tag 07VT}]{stacks-project}. It follows that $X$ is affine since the map $X\to X_{\operatorname{ev}}$ is affine by \Cref{cor:ev_map_integral}.
    
    Let $Y=\operatorname{sSpec}(\mathbb{C}[x|\xi]^{\mathbb{G}_a^{0|1}})$. The natural quotient morphism $\mathbb{A}^{1|1}\to Y$ descends to a morphism $\phi\colon\mathcal{X}\to Y$. By the universal property, we get a morphism $\phi\colon X\to Y$. Since $\pi\colon\mathcal{X}\to X$ is a coarse moduli space, we see that $\pi_*\mathcal{O_X}\cong\mathcal{O}_X$. On the other hand, we know that $Y$ is the spectrum of the superring of invariants. It follows that $\phi_*\mathcal{O_X}\cong\mathcal{O}_Y$. Therefore, we have $\overline{\phi}_*\mathcal{O}_X\cong\mathcal{O}_Y$. Since $X$ is affine, we must have $X\cong Y$.

    It remains to show $\pi\colon\mathcal{X}\to Y$ is not a coarse moduli space. We first observe that the bosonic reduction of $\mathcal{X}$ is just $\mathbb{A}^1$ over $\mathbb{C}$. However, the bosonic reduction of $Y$ is $\operatorname{Spec}(\mathbb{C})$. This is because the superring of invariants of this action is $\mathbb{C}\oplus\xi\mathbb{C}[x]$ as computed in \Cref{ex:G_a^-action_not_integral}. This tells us that $\pi$ cannot be a homeomorphism. Therefore $\mathcal{X}$ admits no coarse moduli spaces.
\end{example}

\appendix

\section{Bosonic quotients and the AF condition}\label{sec:AF}

In this appendix, we discuss the AF condition for algebraic superspaces and superstacks. We first explain how to construct the bosonic quotient of a Deligne--Mumford superstack. For complex Deligne--Mumford superstacks, this is \cite[Theorem\ 3.13(1)]{CV19}. The argument presented in \textit{loc.\@ cit.\@} applies to an arbitrary Deligne--Mumford superstacks.

Let $X$ be an algebraic superspace over $\mathbb{Z}[1/2]$. Choose an \'etale presentation $U\to X$ where $U$ is a superscheme. Let $R=U\times_XU$ and $s,t\colon R\to U$ be the natural projections. Then we have an \'etale supergroupoid $s,t\colon R\rightrightarrows U$. Taking the bosonic quotient of $s,t\colon R\rightrightarrows U$ gives $s_{\operatorname{ev}},t_{\operatorname{ev}}\colon R_{\operatorname{ev}}\rightrightarrows U_{\operatorname{ev}}$. We claim that this gives us an \'etale equivalence relation of ordinary schemes. Indeed, we first see that $s_{\operatorname{ev}},t_{\operatorname{ev}}$ are both \'etale since $s,t$ are so. By \cite[Lemma\ 7.10]{MZ24}, we have$$(R\times_{s,U,t}R)_{\operatorname{ev}}\cong R_{\operatorname{ev}}\times_{s_{\operatorname{ev}},U_{\operatorname{ev}},t_{\operatorname{ev}}}R_{\operatorname{ev}}.$$Therefore the unit, inverse, and composition maps descend to $s_{\operatorname{ev}},t_{\operatorname{ev}}\colon R_{\operatorname{ev}}\rightrightarrows U_{\operatorname{ev}}$, making it an \'etale groupoid. We define the bosonic quotient of $X$ as $X_{\operatorname{ev}}=[U_{\operatorname{ev}}/R_{\operatorname{ev}}]$. We claim that $\mathcal{X}_{\operatorname{ev}}$ is an ordinary algebraic space. Indeed, the supergroupoid $s,t\colon R\rightrightarrows U$ is an \'etale equivalence relation because $X$ is an algebraic superspace. It follows that the groupoid $s_{\operatorname{ev}},t_{\operatorname{ev}}\colon R_{\operatorname{ev}}\rightrightarrows U_{\operatorname{ev}}$ is an \'etale equivalence relation by the same geometric point argument in the proof of \Cref{thm:superspace_finite_flat_covering}. This tells us that the bosonic quotient of an algebraic superspace is an algebraic space. 

\begin{lemma}[{cf. \cite[Lemma\ 7.10]{MZ24}}]\label{lem:ev_commutes_etale_base_change}
    Let $S$ be an algebraic superspace. Let $X,Y$ be algebraic superspaces over $S$.  If $X\to S$ is \'etale, then so is the induced map $X_{\operatorname{ev}}\to S_{\operatorname{ev}}$ and the following diagram
    \begin{equation*}
        \begin{tikzcd}
            X\arrow[r]\arrow[d]\arrow[rd, phantom,"\square"] & S\arrow[d]\\
            X_{\operatorname{ev}}\arrow[r] & S_{\operatorname{ev}}
        \end{tikzcd}
    \end{equation*}
    is Cartesian. In particular, the canonical morphism$$(X\times_SY)_{\operatorname{ev}}\longrightarrow X_{\operatorname{ev}}\times_{S_{\operatorname{ev}}}Y_{\operatorname{ev}}$$is an isomorphism.
\end{lemma}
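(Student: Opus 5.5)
The plan is to reduce the statement to affine superschemes and then to a computation with étale superring maps, using that an étale morphism of superschemes is, locally, a "purely even" thickening of an étale morphism of bosonic reductions.

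Step one is the affine case. Since both assertions are étale local on $S$ and $X$, and bosonic quotients are built from étale presentations, I will choose an étale presentation of $S$ by a disjoint union of affine superschemes. I will then take an étale presentation of the base change of $X$ by affines. This reduces to $S=\operatorname{sSpec}(A)$, $X=\operatorname{sSpec}(B)$ with $A\to B$ étale.

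Step two is the local structure of this étale map. By the local structure theorem for étale morphisms of superschemes (cf.\ \cite{BHP23}), we may further assume $B\cong A\otimes_{A_0}B'$ with $B'=(A_0[x]/(f))_g$ étale over $A_0$. Equivalently, étale maps of superschemes deform uniquely along the nilpotent odd directions: lift the standard étale presentation of $B_{\operatorname{bos}}$ over $A_{\operatorname{bos}}$ to $A_0$, giving $B'$. Then $A\otimes_{A_0}B'$ is étale over $A$ with the same bosonic reduction as $B$. By the topological invariance of the étale site (\Cref{prop:super_une_equivalence}, or its affine algebraic version), it is isomorphic to $B$. I expect this to be the main obstacle, namely establishing this form of the étale algebra without circularity, since \Cref{prop:super_une_equivalence} itself uses the present lemma. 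So I will argue directly: infinitesimal lifting of idempotents and solutions of $f$ along the nilpotent ideal $J_A$.

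Step three is the computation. Granting $B=A\otimes_{A_0}B'$ with $B'$ flat (indeed étale) over $A_0$, the grading gives $B_0=A_0\otimes_{A_0}B'=B'$ and $B_1=A_1\otimes_{A_0}B'$. Hence $X_{\operatorname{ev}}=\operatorname{Spec}B'\to S_{\operatorname{ev}}=\operatorname{Spec}A_0$ is étale. Moreover $S\times_{S_{\operatorname{ev}}}X_{\operatorname{ev}}=\operatorname{sSpec}(A\otimes_{A_0}B')=X$, which is exactly the claimed Cartesian square. Gluing the local statements (which are canonical, since the square's maps are the natural ones) gives the global Cartesian diagram.

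Step four is the final claim. The base change of $X\to S$ along $Y\to S$ is étale, so by step three $X\times_SY\cong Y\times_{Y_{\operatorname{ev}}}(X\times_SY)_{\operatorname{ev}}$ and also $X\times_S Y\cong X_{\operatorname{ev}}\times_{S_{\operatorname{ev}}}Y$. Taking even parts locally, with $Y=\operatorname{sSpec}(C)$, gives $(C\otimes_{A_0}B')_0=C_0\otimes_{A_0}B'$, again by flatness of $B'$ over $A_0$. This identifies $(X\times_SY)_{\operatorname{ev}}$ with $X_{\operatorname{ev}}\times_{S_{\operatorname{ev}}}Y_{\operatorname{ev}}$, and the canonical map is this isomorphism.
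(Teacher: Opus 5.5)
Your strategy is viable, but the decisive step in Step 2 is not actually proved. Lifting a root of $f$ along $B_0\to B_{\operatorname{bos}}$ (keeping $g$ invertible) gives only a homomorphism of $A$-superalgebras $\phi\colon C:=A\otimes_{A_0}B'\to B$ that is an isomorphism on bosonic reductions. The proposal never shows that $\phi$ itself is an isomorphism, and that isomorphism is exactly the Cartesian square you are trying to prove.

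Root-lifting alone cannot give it, because it never uses that $A\to B$ is \'etale in the super sense. For $A=k$ and $B=k[\theta]$, the same construction produces $\phi\colon k\to k[\theta]$, which is an isomorphism on bosonic reductions but not an isomorphism. The proposal also does not explain how ``lifting idempotents'' would close this.

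What is needed is the infinitesimal lifting property of $A\to B$ itself. Lift $B\to B_{\operatorname{bos}}\cong C_{\operatorname{bos}}$ to a map $\psi\colon B\to C$. Then use uniqueness of lifts, for $B$ and for $C$ over $A$, to get $\psi\circ\phi=\mathrm{id}_C$ and $\phi\circ\psi=\mathrm{id}_B$. Note also that the relevant kernels are $B_1^2\subseteq B_0$ and $J_C$, not $J_A$.

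Two further points must be handled.
\begin{enumerate}
\item The paper allows superrings of infinite odd dimension, so $J_A$, $J_B$ and $J_C$ are only locally nilpotent, not nilpotent. Before applying infinitesimal lifting, use finite presentation to replace them by the ideal generated by the finitely many odd elements that actually occur; that ideal is nilpotent.
\item Choosing a standard \'etale presentation of $B_{\operatorname{bos}}$ presupposes that $A_{\operatorname{bos}}\to B_{\operatorname{bos}}$ is \'etale. This follows from formal \'etaleness of $A\to B$ together with the adjunction $\operatorname{Hom}(B,T)=\operatorname{Hom}(B_{\operatorname{bos}},T)$ for ordinary rings $T$, but it should be stated.
\end{enumerate}

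With these additions your route works, and it differs from the paper's. The paper takes the canonical map $A\otimes_{A_0}B_0\to B$ and shows it is an isomorphism by the Nakayama argument in the proof of \cite[Proposition A.26]{BHP23}, which is valid since $A\to B$ is locally of finite presentation. It then deduces that $A_0\to B_0$ is \'etale. You instead get \'etaleness over $A_0$ for free from the lifted standard presentation, and pay for it with the identification of $B$.

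Steps 3 and 4 are fine once Step 2 is repaired. Flatness of $B'$ is not needed there; the decomposition $A\otimes_{A_0}B'=B'\oplus(A_1\otimes_{A_0}B')$ suffices.
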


\begin{proof}
    The statement is \'etale-local both on the source and target. So we may assume that $X,Y$ and $S$ are affine superschemes. Since $X\to S$ is \'etale, it is formally \'etale and locally of finite presentation. The statement follows from the proof of \cite[Proposition A.26]{BHP23}.  
\end{proof}

Although \cite[Proposition A.26]{BHP23} is stated over a locally Noetherian base, its proof readily extends to \'etale morphisms over an arbitrary base. Indeed, the argument is local and the use of Nakayama's lemma only concerns finitely generated modules coming from a morphism locally of finite presentation. Therefore the canonical morphism $B_0\otimes_{A_0}A\to B$ is still an isomorphism in the proof of \cite[Proposition A.26]{BHP23}, and the rest of the argument shows that $A_0\to B_0$ is \'etale. We record the following fact about the bosonic quotient map as a consequence of \Cref{lem:ev_commutes_etale_base_change}.

\begin{corollary}\label{cor:ev_map_integral}
    Let $X$ be an algebraic superspace. Then the canonical bosonic quotient map $X\to X_{\operatorname{ev}}$ is integral.
\end{corollary}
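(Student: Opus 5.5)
We need to prove that the bosonic quotient map $X\to X_{\operatorname{ev}}$ is integral. By \Cref{lem:integral_morphism_etale_local}, integrality can be checked \'etale locally on the target. So the plan is to reduce to the affine case using \Cref{lem:ev_commutes_etale_base_change}, and then verify integrality directly on superrings.

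First, choose an \'etale presentation $U\to X$ from a disjoint union of affine superschemes. By construction, $X_{\operatorname{ev}}=U_{\operatorname{ev}}/R_{\operatorname{ev}}$, so $U_{\operatorname{ev}}\to X_{\operatorname{ev}}$ is \'etale and surjective. By \Cref{lem:ev_commutes_etale_base_change}, applied to the \'etale morphism $U\to X$, the square with corners $U$, $X$, $U_{\operatorname{ev}}$, $X_{\operatorname{ev}}$ is Cartesian. Hence
\[
X\times_{X_{\operatorname{ev}}}U_{\operatorname{ev}}\cong U .
\]
By \Cref{lem:integral_morphism_etale_local}(3), it therefore suffices to show that $U\to U_{\operatorname{ev}}$ is integral. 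Integrality is Zariski local on the target, and the bosonic quotient of a disjoint union is the disjoint union of the bosonic quotients. So we reduce to $U=\operatorname{sSpec}(A)$ affine, with $U_{\operatorname{ev}}=\operatorname{Spec}(A_0)$.

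In the affine case, the map is induced by the inclusion $A_0\hookrightarrow A$, and it is affine since both source and target are affine. Integrality now follows from \Cref{lem:integral_bosonic}: an element $a=a_0+a_1$ is integral over $A_0$ if and only if $a_0$ is. This holds trivially, since $a_0$ is a root of the monic even polynomial $x-a_0\in A_0[x]$. Concretely, $(a-a_0)^2=a_1^2=0$. Thus $A_0\to A$ is integral, and $U\to U_{\operatorname{ev}}$ is integral by \Cref{def:integral_morphism_super}.

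The only step requiring care is the first one. We need the Cartesian square from \Cref{lem:ev_commutes_etale_base_change} to be applicable with $S=X$ an algebraic superspace, and we need $U_{\operatorname{ev}}\to X_{\operatorname{ev}}$ to be an \'etale cover. Both follow from the construction of $X_{\operatorname{ev}}$ as the quotient of the \'etale equivalence relation $R_{\operatorname{ev}}\rightrightarrows U_{\operatorname{ev}}$ given just before the lemma. With this in place, the rest is formal.
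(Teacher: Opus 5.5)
Your proposal is correct and follows essentially the same route as the paper. You choose an \'etale presentation by a disjoint union of affines, use the Cartesian square of \Cref{lem:ev_commutes_etale_base_change} together with \Cref{lem:integral_morphism_etale_local} to reduce to $U\to U_{\operatorname{ev}}$ with $U$ affine, and conclude from \Cref{lem:integral_bosonic} that $A_0\to A$ is integral. The only difference is that you spell out details the paper leaves implicit, namely the identification $X\times_{X_{\operatorname{ev}}}U_{\operatorname{ev}}\cong U$ and the explicit monic polynomial $(x-a_0)^2$.
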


\begin{proof}
    Choose an \'etale presentation $U\to X$ from a superscheme $U$. We may assume $U$ is a disjoint union of affine superscheme by choosing a suitable open cover. By \Cref{lem:ev_commutes_etale_base_change} and \Cref{lem:integral_morphism_etale_local}, we may assume that $U=X$. In this case, the statement follows from \Cref{lem:integral_bosonic}.
\end{proof}

We may then use \Cref{lem:ev_commutes_etale_base_change} to define the bosonic quotient of a Deligne--Mumford superstack over $\mathbb{Z}[1/2]$. Let $\mathcal{X}$ be a Deligne--Mumford superstack over $\mathbb{Z}[1/2]$. Choose an \'etale presentation $U\to \mathcal{X}$ where $U$ is a superscheme. Let $R=U\times_\mathcal{X}U$ and $s,t\colon R\to U$ be the natural projections. Note that $R$ is an algebraic superspace. Then we have an \'etale supergroupoid $s,t\colon R\rightrightarrows U$. Taking the bosonic quotient of $s,t\colon R\rightrightarrows U$ gives $s_{\operatorname{ev}},t_{\operatorname{ev}}\colon R_{\operatorname{ev}}\rightrightarrows U_{\operatorname{ev}}$. We claim that this gives us an \'etale groupoid relation of ordinary algebraic spaces. Indeed, we first see that $s_{\operatorname{ev}},t_{\operatorname{ev}}$ are both \'etale since $s,t$ are so. By \Cref{lem:ev_commutes_etale_base_change}, we have$$(R\times_{s,U,t}R)_{\operatorname{ev}}\cong R_{\operatorname{ev}}\times_{s_{\operatorname{ev}},U_{\operatorname{ev}},t_{\operatorname{ev}}}R_{\operatorname{ev}}.$$Therefore the unit, inverse, and composition maps descend to $s_{\operatorname{ev}},t_{\operatorname{ev}}\colon R_{\operatorname{ev}}\rightrightarrows U_{\operatorname{ev}}$, making it an \'etale groupoid. It follows that the quotient $[U_{\operatorname{ev}}/R_{\operatorname{ev}}]$ is an ordinary Deligne--Mumford stack. We define the bosonic quotient of $\mathcal{X}$ as $\mathcal{X}_{\operatorname{ev}}=[U_{\operatorname{ev}}/R_{\operatorname{ev}}]$. Moreover, there is a natural quotient map $\mathcal{X}\to\mathcal{X}_{\operatorname{ev}}$. To see this, we observe that the composition $U\to U_{\operatorname{ev}}\to\mathcal{X}_{\operatorname{ev}}$ is $R$-invariant. Therefore it descends to a morphism $\mathcal{X}\to\mathcal{X}_{\operatorname{ev}}$ as desired. 

Let $Y$ be an ordinary Deligne--Mumford stack. We claim that there is an equivalence$$\operatorname{Hom}_{\operatorname{DM}}(\mathcal{X}_{\operatorname{ev}},Y)\simeq\operatorname{Hom}_{\operatorname{sDM}}(\mathcal{X},Y).$$To see this, choose an \'etale presentation $U\to X$ and let $R=U\times_\mathcal{X}U$. Consider the supergroupoid $(U,R)$. We know that a morphism from $\mathcal{X}\to\mathcal{Y}$ is the same as an $R$-invariant morphism $U\to\mathcal{Y}$ from a superscheme satisfying the cocycle condition. But we have$$\operatorname{Hom}_{\operatorname{DM}}(U_{\operatorname{ev}},Y)\simeq\operatorname{Hom}_{\operatorname{sDM}}(U,Y)$$and the claim follows. This implies that taking the bosonic quotient is the left adjoint of the natural inclusion from the 2-category of Deligne--Mumford stacks to that of Deligne--Mumford superstacks.

Recall that a superscheme is AF if every finite set of points is contained in some affine open subsuperscheme. We say a morphism of algebraic superspaces $X\to Y$ is AF if for every morphism $U\to Y$ from an affine superscheme $U$, the fibre product $X\times_YU$ is an AF superscheme. For example, an affine morphism is AF.
 
\begin{proposition}\label{prop:superscheme_AF}
    Let $f\colon X\to S$ be a separated and \'etale morphism of algebraic superspaces. If $S$ is AF, so is $X$.
\end{proposition}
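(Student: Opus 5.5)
The plan is to reduce to the ordinary statement about AF algebraic spaces via the bosonic reduction, since the AF property depends only on the underlying topological space together with which open subsets are affine. I read the hypothesis ``$S$ is AF'' as saying that $S$ is an AF superscheme, or more generally that $S$ is an algebraic superspace for which every finite set of points lies in an open affine subsuperscheme. The conclusion is then that $X$ is a superscheme with the same property. In the application in \Cref{lem:Nisnevich_covering_super}, $S$ is the disjoint union of affines $U$, so this reading suffices.

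The first step is to compare with the bosonic reduction. The closed immersion $X_{\operatorname{bos}}\to X$ is a surjective nil-immersion, since its ideal is generated by odd, hence nilpotent, sections; the same holds for $S_{\operatorname{bos}}\to S$. I would then record two facts.

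\begin{enumerate}
\item[(i)] An algebraic superspace $T$ is a superscheme (respectively affine) if and only if $T_{\operatorname{bos}}$ is a scheme (respectively affine).
\item[(ii)] Open subspaces of $T$ and of $T_{\operatorname{bos}}$ correspond bijectively, and $|T|=|T_{\operatorname{bos}}|$.
\end{enumerate}

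For affineness in (i), one direction is clear. For the other, I would use the bosonic quotient: by \Cref{cor:ev_map_integral} the map $T\to T_{\operatorname{ev}}$ is integral, hence affine. The map $T_{\operatorname{bos}}\to T\to T_{\operatorname{ev}}$ is then integral and surjective from an affine scheme, so $T_{\operatorname{ev}}$ is affine by \cite[\href{https://stacks.math.columbia.edu/tag/07VT}{Tag 07VT}]{stacks-project}. This is the same argument as in \Cref{ex:counter_Keel_Mori}. It follows that $T$ is affine.

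For the ``scheme'' part of (i), I would cover $T_{\operatorname{bos}}$ by affine opens, transport them to opens of $T$ via (ii), and apply the affine case to each. With these facts, the AF property of $T$ is equivalent to the AF property of $T_{\operatorname{bos}}$.

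Next, $f_{\operatorname{bos}}\colon X_{\operatorname{bos}}\to S_{\operatorname{bos}}$ is the base change of $f$ along $S_{\operatorname{bos}}\to S$, because the square displayed before \Cref{lem:super_same_ET_descent_datum} is Cartesian for étale $f$. It is therefore separated and étale, and $S_{\operatorname{bos}}$ is an AF scheme. The ordinary result applies at this point: a separated étale (in fact separated, locally quasi-finite) morphism to an AF scheme has an AF scheme as source. This is \cite[Proposition 4.?]{Ryd13}, or the corresponding statement in Rydh's work on étale dévissage, which uses Zariski's main theorem to factor through a quasi-affine morphism. So $X_{\operatorname{bos}}$ is an AF scheme, and by step one $X$ is an AF superscheme.

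The main obstacle is the affineness lifting in (i) for algebraic superspaces rather than superschemes. This requires knowing that $T_{\operatorname{ev}}$ is an algebraic space, as constructed in \Cref{sec:AF}, and that Chevalley's theorem is available in that setting. If that route turns out to be awkward, I would fall back on a more direct argument. Given a finite set of points of $X$, use the AF property of $S$ to place their images in an affine open $\operatorname{sSpec}(A)$, reducing to the case $S$ affine. Then $X_{\operatorname{bos}}$ is a quasi-affine scheme by Zariski's main theorem. I would lift the affine neighbourhoods from $X_{\operatorname{bos}}$ to $X$ using the equivalence $\mathbf{\acute{E}t}(S)\cong\mathbf{\acute{E}t}(S_{\operatorname{bos}})$ of \Cref{prop:super_une_equivalence}, together with the fact that an étale $S$-superspace whose reduction is affine is itself affine by \Cref{lem:ev_commutes_etale_base_change}.
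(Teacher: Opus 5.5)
Your proof is correct, but it takes a different route from the paper's. You run the ordinary argument on the bosonic reduction, whereas the paper runs it on the bosonic quotient.

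The paper uses \Cref{lem:ev_commutes_etale_base_change} to write $X\cong X_{\operatorname{ev}}\times_{S_{\operatorname{ev}}}S$. It then applies \cite[Proposition B.2(iv) and (vi)]{Ryd13}, which is the reference you were looking for, to conclude that $f_{\operatorname{ev}}$ and hence $X_{\operatorname{ev}}$ are AF. Finally, $X\to X_{\operatorname{ev}}$ is affine by \Cref{cor:ev_map_integral}. So the preimage of an affine open containing the image of a finite set is an affine open containing that set.

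Your version has one small advantage. Since $X_{\operatorname{bos}}=X\times_SS_{\operatorname{bos}}$, separatedness of $f_{\operatorname{bos}}$ and the AF property of $S_{\operatorname{bos}}$ are immediate. The paper leaves the corresponding facts for $f_{\operatorname{ev}}$ and $S_{\operatorname{ev}}$ implicit, and separatedness of $f_{\operatorname{ev}}$ genuinely needs a short argument.

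The price is your lifting criterion (i), that $T$ is affine if $T_{\operatorname{bos}}$ is. Your proof of (i) already uses that $T\to T_{\operatorname{ev}}$ is affine, with Chevalley's theorem added on top. Once you have that affineness, the paper's direct argument applies, so the detour through $T_{\operatorname{bos}}$ is not needed for this statement. What your route buys is criterion (i) as a standalone fact; it is exactly the argument used in \Cref{ex:counter_Keel_Mori}. The fallback in your last paragraph is unnecessary.
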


\begin{proof}
    By \Cref{lem:ev_commutes_etale_base_change}, we have the following Cartesian diagram
    \begin{equation*}
        \begin{tikzcd}
            X\arrow[r,"f"]\arrow[d] & S\arrow[d]\\
            X_{\operatorname{ev}}\arrow[r,"f_{\operatorname{ev}}"] & S_{\operatorname{ev}}
        \end{tikzcd}
    \end{equation*}
    where the square is Cartesian. By \cite[Proposition B.2(vi) and (iv)]{Ryd13}, the map $f_{\operatorname{ev}}$ and the space $X_{\operatorname{ev}}$ are both AF. Since $X\to X_{\operatorname{ev}}$ is affine by \Cref{cor:ev_map_integral}, we see that $X$ is also AF.
\end{proof}

\section{The super Hilbert space of points}\label{sec:SHilb}

In this appendix, we establish the representability of the relative super Hilbert functor of separated, \'etale, and representable morphisms of algebraic superstacks. This was used in the proof of \Cref{lem:Nisnevich_covering_super}. We do so via the relative Hilbert super functor of points. In the ordinary case, this is due to \cite{Ryd11a}. In the super case, it was first considered by \cite{Jan20}. Let $X\to S$ be a separated algebraic superspace. We first recall the definition of the super Hilbert functor of points.

\begin{definition}\label{def:finite_flat_super}
    Let $f\colon X\to Y$ be a finite morphism of algebraic superspaces. We say $f$ is \textit{flat of rank $p|q$} if $f_{*}\mathcal{O}_X$ is a locally free sheaf on $Y$ of rank $p|q$.
\end{definition}

\begin{definition}\label{def:super_hilbert_functor}
    Let $X\to S$ be a separated algebraic superstack. The \textit{super Hilbert functor of points} $\mathcal{SH}\textit{ilb}_{X/S}$ is the functor that assigns, to a superscheme $T\to S$, the set of closed subsuperspaces $Z\hookrightarrow X\times_ST$ such that the composition $Z\hookrightarrow X\times_ST\to T$ is finite flat and locally of finite presentation. The \textit{super Hilbert functor of points} $\mathcal{SH}\textit{ilb}_{X/S}^{p|q}$ is the functor that assigns, to a superscheme $T\to S$, the set of closed subsuperspaces $Z\hookrightarrow X\times_ST$ such that the composition $Z\hookrightarrow X\times_ST\to T$ is finite flat of rank $p|q$. 
\end{definition}

It is not hard to see from the definition that $\mathcal{SH}\textit{ilb}_{X/S}^{p|q}$ is a sheaf on $\mathbf{sSch}_{/S}$ for the fpqc topology. Let $f\colon X\to Y$ be a morphism of algebraic superspaces over $S$. If $f$ is a closed immersion, then there is a natural transformation $f_{*}\colon\mathcal{SH}\textit{ilb}_{X/S}^{p|q}\to\mathcal{SH}\textit{ilb}_{Y/S}^{p|q}$, which sends a closed subscheme $Z\hookrightarrow X\times_ST$ to $f_{T}(Z)\hookrightarrow Y\times_ST$ where $f_{T}\colon X\times_ST\to Y\times_ST$ for every superscheme $T\to S$. In this appendix, we will focus on the functor $\mathcal{SH}\textit{ilb}_{X/S}^{d|0}$ where $X\to S$ is separated and \'etale. In this case, we have $\mathcal{SH}\textit{ilb}_{X/S}=\coprod_{d\geq 0}\mathcal{SH}\textit{ilb}_{X/S}^{d|0}$.

\begin{proposition}\label{prop:SHilb_quasi_finite}
    Let $f\colon X\to S$ be a separated and \'etale morphism of algebraic superspaces. Then the relative super Hilbert functor $\mathcal{SH}\textit{ilb}_{X/S}$ is representable by an algebraic superspace over $S$ such that $\mathcal{SH}\textit{ilb}_{X/S}\cong\mathcal{H}\textit{ilb}_{X_{\operatorname{ev}}/S_{\operatorname{ev}}}\times_{S_{\operatorname{ev}}}S$. In particular, $\mathcal{SH}\textit{ilb}_{X/S}$ is separated and \'etale over $S$.
\end{proposition}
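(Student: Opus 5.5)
The plan is to reduce to the ordinary statement for $X_{\operatorname{ev}}\to S_{\operatorname{ev}}$, using \Cref{lem:ev_commutes_etale_base_change} to transport everything along the bosonic quotient. The ordinary input is \cite{Ryd11a}: for a separated étale morphism of algebraic spaces, $\mathcal{H}\textit{ilb}_{X_{\operatorname{ev}}/S_{\operatorname{ev}}}$ is representable by an algebraic space that is separated and étale over $S_{\operatorname{ev}}$. By \Cref{lem:ev_commutes_etale_base_change}, $X_{\operatorname{ev}}\to S_{\operatorname{ev}}$ is étale (and separated, since separatedness is topological and $|X|=|X_{\operatorname{ev}}|$), and $X\cong X_{\operatorname{ev}}\times_{S_{\operatorname{ev}}}S$. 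So it suffices to construct a natural isomorphism of functors on $\text{sSch}_{/S}$
\[
\mathcal{SH}\textit{ilb}_{X/S}(T)\cong \mathcal{H}\textit{ilb}_{X_{\operatorname{ev}}/S_{\operatorname{ev}}}(T),
\]
where $T$ on the right is viewed over $S_{\operatorname{ev}}$ via $T\to S\to S_{\operatorname{ev}}$. Here $\mathcal{H}\textit{ilb}$ is understood as the functor extended to superschemes by base change, i.e.\ the functor of points of the algebraic superspace $\mathcal{H}\textit{ilb}_{X_{\operatorname{ev}}/S_{\operatorname{ev}}}\times_{S_{\operatorname{ev}}}S$. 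Representability, separatedness and étaleness then follow by base change.

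The key step is to describe both sides as open and closed subspaces. Since $X\times_ST\to T$ is separated and étale, a closed subsuperspace $Z\hookrightarrow X\times_ST$ that is finite flat over $T$ is itself étale over $T$. Its inclusion into $X\times_ST$ is then an étale closed immersion, hence an open and closed immersion. Thus $\mathcal{SH}\textit{ilb}_{X/S}(T)$ is the set of open and closed subsuperspaces of $X\times_ST$ that are finite over $T$; any such subspace is automatically finite étale, hence finite locally free of rank $d|0$ and of finite presentation. The same description holds for $\mathcal{H}\textit{ilb}_{X_{\operatorname{ev}}/S_{\operatorname{ev}}}(T)$, using closed subspaces of $X_{\operatorname{ev}}\times_{S_{\operatorname{ev}}}T$.

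Since $X_{\operatorname{ev}}\times_{S_{\operatorname{ev}}}T\cong X\times_S T$ by the Cartesian square above, both sets are the same collection of open-closed subspaces of one superspace. Open-closed subsuperspaces correspond to clopen subsets of $|X\times_ST|$, and finiteness over $T$ is checked on the same object, so the two sets agree. I will check that this identification is compatible with pullback along $T'\to T$; it should be, because everything is defined by fibre products.

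The main obstacle I expect is the claim that a finite flat closed subsuperspace of a separated étale superspace is étale over $T$, and hence open. I would argue fibrewise. Flatness plus finite presentation reduces unramifiedness to the fibres. On a fibre over a point $\operatorname{Spec}k\to T$, $Z_k$ is a closed subsuperspace of an étale $k$-superspace, which is a disjoint union of spectra of finite separable field extensions with no odd part. A closed subspace of such a union is a union of some of its components. Hence $Z$ is unramified and flat of finite presentation over $T$, so étale. A flat, finitely presented closed immersion is open, so $Z\hookrightarrow X\times_ST$ is open and closed. I will also verify that closedness and finite presentation are interchangeable between the two sides, using \Cref{cor:super_same_ET} and the equivalence of étale sites from \Cref{prop:super_une_equivalence} where needed.
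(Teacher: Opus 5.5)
Your overall plan is the right one: identify the $T$-points on both sides with clopen pieces of $X\times_ST$ that are finite over $T$, then get representability, separatedness and \'etaleness by base change from Rydh. But the step that carries all the content is asserted rather than proved.

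You define the right-hand side as the functor of points of $\mathcal{H}\textit{ilb}_{X_{\operatorname{ev}}/S_{\operatorname{ev}}}\times_{S_{\operatorname{ev}}}S$. You then say ``the same description holds'' in terms of finite flat closed subspaces of $X_{\operatorname{ev}}\times_{S_{\operatorname{ev}}}T$. Rydh only tells you what $\mathcal{H}\textit{ilb}_{X_{\operatorname{ev}}/S_{\operatorname{ev}}}$ represents on ordinary schemes. Moreover, $X_{\operatorname{ev}}\times_{S_{\operatorname{ev}}}T\cong X\times_ST$, so the set you describe is literally $\mathcal{SH}\textit{ilb}_{X/S}(T)$. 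Your description of the right-hand side is therefore equivalent to the proposition, and the argument is circular exactly at this point. What one actually has is
\[
(\mathcal{H}\textit{ilb}_{X_{\operatorname{ev}}/S_{\operatorname{ev}}}\times_{S_{\operatorname{ev}}}S)(T)=\operatorname{Hom}_{S_{\operatorname{ev}}}(T,\mathcal{H}\textit{ilb}_{X_{\operatorname{ev}}/S_{\operatorname{ev}}})=\mathcal{H}\textit{ilb}_{X_{\operatorname{ev}}/S_{\operatorname{ev}}}(T_{\operatorname{ev}}),
\]
because the target is an ordinary algebraic space and the bosonic quotient is left adjoint to the inclusion. So a $T$-point is a finite \'etale clopen subspace $Z'$ of the \emph{ordinary} space $X_{\operatorname{ev}}\times_{S_{\operatorname{ev}}}T_{\operatorname{ev}}\cong(X\times_ST)_{\operatorname{ev}}$. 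It is not a subsuperspace of $X\times_ST$.

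The missing step, which is the heart of the paper's proof, is the comparison between $T$ and $T_{\operatorname{ev}}$:
\begin{enumerate}
\item Send $Z'$ to $Z'\times_{T_{\operatorname{ev}}}T$, and conversely send $Z\subseteq X\times_ST$ to $Z_{\operatorname{ev}}$.
\item Use \Cref{lem:ev_commutes_etale_base_change} for the \'etale maps $Z\to T$ and $X\times_ST\to T$. This gives $Z\cong Z_{\operatorname{ev}}\times_{T_{\operatorname{ev}}}T$ and shows the underlying spaces agree.
\item Check that finiteness over $T$ and finiteness over $T_{\operatorname{ev}}$ correspond, and that the bijection is natural in $T$.
\end{enumerate}
Going through $T_{\operatorname{bos}}$ and the equivalence of \'etale sites in \Cref{prop:super_une_equivalence}, which you mention at the end, would also work. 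However, you never apply it on the Hilbert-space side.

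By contrast, what you flag as the main obstacle is a one-liner. $Z\hookrightarrow X\times_ST\to T$ is a closed immersion followed by an \'etale map, hence unramified. So $Z\to T$ is finite locally free and unramified, hence \'etale, and $Z$ is clopen in $X\times_ST$. Once the $T$ versus $T_{\operatorname{ev}}$ bijection is in place, your deduction of separatedness and \'etaleness by base change from the ordinary Hilbert space is fine. It is also slightly more economical than the paper's separate argument for separatedness.
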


\begin{proof}
    It suffices to prove the statement for $\mathcal{SH}\textit{ilb}_{X/S}^{d|0}$ for every $d\geq 0$. Let $T\to S$ be a morphism of superschemes. An object in $\mathcal{SH}\textit{ilb}_{X/S}^{d|0}(T)$ is a closed immersion $Z\hookrightarrow X\times_ST\to T$ that is finite and \'etale of rank $d|0$. Indeed, the composition $Z\to T$ is finite locally free and unramified and thus \'etale. In particular, $Z\hookrightarrow X\times_ST$ is an open and closed immersion. Since $f\colon X\to S$ is \'etale, we have $(X\times_ST)_{\operatorname{ev}}\cong X_{\operatorname{ev}}\times_{S_{\operatorname{ev}}}T_{\operatorname{ev}}$. Therefore taking bosonic quotients yields a family $Z_{\operatorname{ev}}\hookrightarrow X_{\operatorname{ev}}\times_{S_{\operatorname{ev}}}T_{\operatorname{ev}}\to T_{\operatorname{ev}}$ that is finite and \'etale of degree $d$ over $T_{\operatorname{ev}}$. This is precisely an object in $(\mathcal{H}\textit{ilb}_{X_{\operatorname{ev}}/S_{\operatorname{ev}}}^d\times_{S_{\operatorname{ev}}}S)(T)$. On the other hand, we see that $(\mathcal{H}\textit{ilb}_{X_{\operatorname{ev}}/S_{\operatorname{ev}}}^d\times_{S_{\operatorname{ev}}}S)(T)=\mathcal{H}\textit{ilb}_{X_{\operatorname{ev}}/S_{\operatorname{ev}}}^d(T_{\operatorname{ev}})$. Taking the base change along $T\to T_{\operatorname{ev}}$ sends an object in $(\mathcal{H}\textit{ilb}_{X_{\operatorname{ev}}/S_{\operatorname{ev}}}^d\times_{S_{\operatorname{ev}}}S)(T)$ to an object in $\mathcal{SH}\textit{ilb}_{X/S}^{d|0}(T)$. One readily checks that these two constructions are quasi-inverses of each other. Since this is true for an arbitrary superscheme $T\to S$, the result follows. The last statement then follows from the fact that $\mathcal{H}\textit{ilb}_{X_{\operatorname{ev}}/S_{\operatorname{ev}}}^d$ is \'etale over $S_{\operatorname{ev}}$ by \cite[Theorem 5.1]{Ryd11a}. 

    For separatedness, we argue as in \cite[Lemma 2.1]{Ryd11a}. For every superscheme $T$ over $S$, a morphism $T\to \mathcal{SH}\textit{ilb}_{X/S}^{d|0}\times_S\mathcal{SH}\textit{ilb}_{X/S}^{d|0}$ corresponds to a pair of closed subschemes $Z_{1},Z_{2}\hookrightarrow X\times_ST$. Define the functor
    \begin{align*}
        T_\Delta\colon(\mathbf{sSch}_{/T})^{op}&\longrightarrow\mathbf{Set}\\
        (T^\prime\to T)&\longmapsto\begin{cases}
            \{*\}\ \ \text{if}\ Z_{1}\times_{T}T^\prime=Z_{2}\times_{T}T^\prime\\
            \varnothing\ \ \text{otherwise}.
        \end{cases}
    \end{align*}
    This functor is representable by a closed subsuperscheme of $T$. Therefore, we have the following Cartesian square
    \begin{equation*}
    \begin{tikzcd}
        T_\Delta\arrow[r]\arrow[d]\arrow[dr,phantom,"\square"] & T\arrow[d]\\
        \mathcal{SH}\textit{ilb}_{X/S}^{d|0}\arrow[r,"\Delta"] & \mathcal{SH}\textit{ilb}_{X/S}^{d|0}\times_S\mathcal{SH}\textit{ilb}_{X/S}^{d|0}
    \end{tikzcd}
\end{equation*}
where the top horizontal map is a closed immersion. It follows that the diagonal of $\mathcal{SH}\textit{ilb}_{X/S}^{d|0}$ is a closed immersion. This finishes the proof.
\end{proof}

\begin{corollary}\label{cor:SHilb_quasi_finite}
    Let $f\colon X\to S$ be a separated \'etale and representable morphism of algebraic superstacks. Then the relative super Hilbert functor $\mathcal{SH}\textit{ilb}_{X/S}\to S$ is separated, \'etale and representable by an algebraic superspace.
\end{corollary}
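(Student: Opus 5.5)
We reduce \Cref{cor:SHilb_quasi_finite} to \Cref{prop:SHilb_quasi_finite} by smooth (or étale) descent along $S$. Since $S$ here is an algebraic superstack, we read $\mathcal{SH}\textit{ilb}_{X/S}$ as the category fibred over $S$ whose fibre over a superscheme $T\to S$ is the set of closed subsuperspaces $Z\hookrightarrow X\times_S T$ that are finite, flat and locally of finite presentation over $T$. Because $X\to S$ is representable, $X\times_S T$ is an algebraic superspace. This fibre is therefore a set, and it is compatible with pullback, so the morphism $\mathcal{SH}\textit{ilb}_{X/S}\to S$ is fibred in sets.

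We show that $\mathcal{SH}\textit{ilb}_{X/S}\to S$ is representable by algebraic superspaces. Take any morphism $T\to S$ from a superscheme $T$ and set $X_T=X\times_S T$. This is an algebraic superspace, and $X_T\to T$ is separated and étale because those properties are stable under base change. From the definition we get an identification
\[
\mathcal{SH}\textit{ilb}_{X/S}\times_S T\cong \mathcal{SH}\textit{ilb}_{X_T/T},
\]
since for any $T'\to T$ we have $X\times_S T'=X_T\times_T T'$ and the two conditions agree. \Cref{prop:SHilb_quasi_finite} then shows that the right-hand side is an algebraic superspace which is separated and étale over $T$. The isomorphism has to be natural in $T$ so that these base changes glue into a sheaf. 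This follows from the fpqc-sheaf property of the Hilbert functor already noted before \Cref{prop:SHilb_quasi_finite}, together with the fact that closed subsuperspaces and finite flat conditions descend along étale and smooth covers.

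Separatedness and étaleness of a representable morphism can both be checked after base change along the smooth surjective presentation $U\to S$. The previous step gives both properties over $U$, so they descend to $\mathcal{SH}\textit{ilb}_{X/S}\to S$. We also want $\mathcal{SH}\textit{ilb}_{X/S}$ to be an algebraic superstack, for use in \Cref{lem:Nisnevich_covering_super}. This holds because a category fibred over an algebraic superstack that is representable by algebraic superspaces is itself algebraic: its diagonal is obtained from that of $S$ together with the separated diagonal of the relative superspace, and a presentation comes from pulling back $U$.

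The main point to check is the stack condition. We must verify that $\mathcal{SH}\textit{ilb}_{X/S}$ satisfies étale descent as a fibred category over $\text{sSch}$, and that the identification above is canonical. For this we would use effective fpqc descent of closed subsuperspaces of $X_T$, which holds because closed immersions are affine. We expect this step to be routine but notation-heavy.
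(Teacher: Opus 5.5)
Your proposal is correct and follows the same route as the paper. The paper notes that the statement is local on $S$, reduces to the case where $S$ is an algebraic superspace, and applies \Cref{prop:SHilb_quasi_finite}; your identification $\mathcal{SH}\textit{ilb}_{X/S}\times_S T\cong\mathcal{SH}\textit{ilb}_{X_T/T}$ is exactly the implicit content of that reduction, and the gluing step you flag is unnecessary, since representability only asks that each such fibre product be an algebraic superspace.
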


\begin{proof}
    The statement is local on $S$. So we may assume that $S$ is an algebraic superspace. The statement then follows from \Cref{prop:SHilb_quasi_finite}.
\end{proof}

\bibliographystyle{alpha}
\bibliography{super.bib}

@article{Westra:2009,
  doi = {10.25365/THESIS.6869},
  url = {https://utheses.univie.ac.at/detail/6203},
  author = {Westra,  Dennis},
  language = {en},
  title = {Superrings and supergroups},
  publisher = {Universit{¨a}t Wien, Wien},
  year = {2009}
}

@ARTICLE{RTT23,
       author = {{Rizzo}, Pedro and {Torres Del Valle}, Joel and {Torres-Gomez}, Alexander},
        title = "{Dedekind Superrings and Related Concepts}",
      journal = {arXiv e-prints},
         year = 2023,
        month = oct,
          eid = {arXiv:2310.03822},
        pages = {arXiv:2310.03822},
          doi = {10.48550/arXiv.2310.03822},
archivePrefix = {arXiv},
       eprint = {2310.03822},
 primaryClass = {math.RA},
       adsurl = {https://ui.adsabs.harvard.edu/abs/2023arXiv231003822R}
}

@misc{stacks-project,
  author       = {The {Stacks project authors}},
  title        = {The Stacks project},
  howpublished = {\url{https://stacks.math.columbia.edu}},
  year         = {2025},
}

@article {BHP23,
    AUTHOR = {Bruzzo, Ugo and Hern\'andez Ruip\'erez, Daniel and Polishchuk,
              Alexander},
     TITLE = {Notes on fundamental algebraic supergeometry. {H}ilbert and
              {P}icard superschemes},
   JOURNAL = {Adv. Math.},
  FJOURNAL = {Advances in Mathematics},
    VOLUME = {415},
      YEAR = {2023},
     PAGES = {Paper No. 108890, 115},
      ISSN = {0001-8708,1090-2082},
   MRCLASS = {14M30 (14D22 14H10 14K10 83E30)},
  MRNUMBER = {4544562},
MRREVIEWER = {Frans\ Cantrijn},
       DOI = {10.1016/j.aim.2023.108890},
       URL = {https://doi.org/10.1016/j.aim.2023.108890},
}

@article {CV19,
    AUTHOR = {Codogni, Giulio and Viviani, Filippo},
     TITLE = {Moduli and periods of supersymmetric curves},
   JOURNAL = {Adv. Theor. Math. Phys.},
  FJOURNAL = {Advances in Theoretical and Mathematical Physics},
    VOLUME = {23},
      YEAR = {2019},
    NUMBER = {2},
     PAGES = {345--402},
      ISSN = {1095-0761,1095-0753},
   MRCLASS = {14M30 (14H10 81T60)},
  MRNUMBER = {4033354},
MRREVIEWER = {Anargyros\ Fellouris},
       DOI = {10.4310/ATMP.2019.v23.n2.a2},
       URL = {https://doi.org/10.4310/ATMP.2019.v23.n2.a2},
}

@book {Ols16,
    AUTHOR = {Olsson, Martin},
     TITLE = {Algebraic spaces and stacks},
    SERIES = {American Mathematical Society Colloquium Publications},
    VOLUME = {62},
 PUBLISHER = {American Mathematical Society, Providence, RI},
      YEAR = {2016},
     PAGES = {xi+298},
      ISBN = {978-1-4704-2798-6},
   MRCLASS = {14D23 (14D22)},
  MRNUMBER = {3495343},
MRREVIEWER = {Stefan\ Schr\"oer},
       DOI = {10.1090/coll/062},
       URL = {https://doi.org/10.1090/coll/062},
}

@article {ATT72,
    AUTHOR = {Atterton, Thomas William},
     TITLE = {Definitions of integral elements and quotient rings over
              non-commutative rings with identity},
   JOURNAL = {J. Austral. Math. Soc.},
  FJOURNAL = {J. Austral. Math. Soc.},
    VOLUME = {13},
      YEAR = {1972},
     PAGES = {433--446},
   MRCLASS = {16A08},
  MRNUMBER = {313288},
MRREVIEWER = {R.\ A.\ Beauregard},
}

@ARTICLE{Con05,
    AUTHOR = "Conrad, Brian",
    TITLE = "Keel-Mori theorem via stacks",
    URL = "https://math.stanford.edu/\textasciitilde conrad/papers/coarsespace.pdf",
    YEAR = "2005"
}

@article {Ryd10,
    AUTHOR = {Rydh, David},
     TITLE = {Submersions and effective descent of \'etale morphisms},
   JOURNAL = {Bull. Soc. Math. France},
  FJOURNAL = {Bulletin de la Soci\'et\'e{} Math\'ematique de France},
    VOLUME = {138},
      YEAR = {2010},
    NUMBER = {2},
     PAGES = {181--230},
      ISSN = {0037-9484,2102-622X},
   MRCLASS = {14A15 (13B22 13B40 14F20 14F43)},
  MRNUMBER = {2679038},
MRREVIEWER = {Liam\ O'Carroll},
       DOI = {10.24033/bsmf.2588},
       URL = {https://doi.org/10.24033/bsmf.2588},
}

@ARTICLE{BH25,
       author = {{Bruzzo}, Ugo and {Hern{\'a}ndez Ruip{\'e}rez}, Daniel},
        title = "{Foundations of superstack theory}",
      journal = {arXiv e-prints},
         year = 2025,
        month = may,
          eid = {arXiv:2505.19899},
        pages = {arXiv:2505.19899},
          doi = {10.48550/arXiv.2505.19899},
archivePrefix = {arXiv},
       eprint = {2505.19899},
 primaryClass = {math.AG},
       adsurl = {https://ui.adsabs.harvard.edu/abs/2025arXiv250519899B}
}

@article {MZ24,
    AUTHOR = {Moosavian, Seyed Faroogh and Zhou, Yehao},
     TITLE = {On the existence of heterotic-string and type-{II}-superstring
              field theory vertices},
   JOURNAL = {J. Geom. Phys.},
  FJOURNAL = {Journal of Geometry and Physics},
    VOLUME = {205},
      YEAR = {2024},
     PAGES = {Paper No. 105307, 73},
      ISSN = {0393-0440,1879-1662},
   MRCLASS = {81T30 (14H81)},
  MRNUMBER = {4794509},
       DOI = {10.1016/j.geomphys.2024.105307},
       URL = {https://doi.org/10.1016/j.geomphys.2024.105307},
}

@article {Ryd13,
    AUTHOR = {Rydh, David},
     TITLE = {Existence and properties of geometric quotients},
   JOURNAL = {J. Algebraic Geom.},
  FJOURNAL = {Journal of Algebraic Geometry},
    VOLUME = {22},
      YEAR = {2013},
    NUMBER = {4},
     PAGES = {629--669},
      ISSN = {1056-3911,1534-7486},
   MRCLASS = {14D23 (14A20 18B40)},
  MRNUMBER = {3084720},
MRREVIEWER = {Stefan\ Schr\"oer},
       DOI = {10.1090/S1056-3911-2013-00615-3},
       URL = {https://doi.org/10.1090/S1056-3911-2013-00615-3},
}

@article {LR88,
    AUTHOR = {LeBrun, Claude and Rothstein, Mitchell},
     TITLE = {Moduli of super {R}iemann surfaces},
   JOURNAL = {Comm. Math. Phys.},
  FJOURNAL = {Communications in Mathematical Physics},
    VOLUME = {117},
      YEAR = {1988},
    NUMBER = {1},
     PAGES = {159--176},
      ISSN = {0010-3616,1432-0916},
   MRCLASS = {32G15 (14H15 81E30)},
  MRNUMBER = {946998},
MRREVIEWER = {V.\ S.\ Retakh},
       URL = {http://projecteuclid.org/euclid.cmp/1104161598},
}

@article {DHS97,
    AUTHOR = {Dom\'inguez P\'erez, J. A. and Hern\'andez Ruip\'erez, D. and
              Sancho de Salas, C.},
     TITLE = {Global structures for the moduli of (punctured) super
              {R}iemann surfaces},
   JOURNAL = {J. Geom. Phys.},
  FJOURNAL = {Journal of Geometry and Physics},
    VOLUME = {21},
      YEAR = {1997},
    NUMBER = {3},
     PAGES = {199--217},
      ISSN = {0393-0440,1879-1662},
   MRCLASS = {14M30 (14H10 32C11)},
  MRNUMBER = {1429097},
MRREVIEWER = {Sergey\ Merkulov},
       DOI = {10.1016/S0393-0440(96)00016-2},
       URL = {https://doi.org/10.1016/S0393-0440(96)00016-2},
}

@ARTICLE{BH25b,
       author = {{Bruzzo}, Ugo and {Hern{\'a}ndez Ruip{\'e}rez}, Daniel},
        title = "{Moduli of stable supermaps}",
      journal = {arXiv e-prints},
         year = 2025,
        month = may,
          eid = {arXiv:2505.22233},
        pages = {arXiv:2505.22233},
          doi = {10.48550/arXiv.2505.22233},
archivePrefix = {arXiv},
       eprint = {2505.22233},
 primaryClass = {math.AG},
       adsurl = {https://ui.adsabs.harvard.edu/abs/2025arXiv250522233B}
}

@article {KM97,
    AUTHOR = {Keel, Se\'an and Mori, Shigefumi},
     TITLE = {Quotients by groupoids},
   JOURNAL = {Ann. of Math. (2)},
  FJOURNAL = {Annals of Mathematics. Second Series},
    VOLUME = {145},
      YEAR = {1997},
    NUMBER = {1},
     PAGES = {193--213},
      ISSN = {0003-486X,1939-8980},
   MRCLASS = {14D25 (14L30)},
  MRNUMBER = {1432041},
MRREVIEWER = {Andrzej\ Bia\l ynicki-Birula},
       DOI = {10.2307/2951828},
       URL = {https://doi.org/10.2307/2951828},
}

@ARTICLE{DO23,
       author = {{Donagi}, Ron and {Ott}, Nadia},
        title = "{Supermoduli Space with Ramond punctures is not projected}",
      journal = {arXiv e-prints},
         year = 2023,
        month = aug,
          eid = {arXiv:2308.07957},
        pages = {arXiv:2308.07957},
          doi = {10.48550/arXiv.2308.07957},
archivePrefix = {arXiv},
       eprint = {2308.07957},
 primaryClass = {math.AG},
       adsurl = {https://ui.adsabs.harvard.edu/abs/2023arXiv230807957D}
}

@article {FKP23,
    AUTHOR = {Felder, Giovanni and Kazhdan, David and Polishchuk, Alexander},
     TITLE = {The moduli space of stable supercurves and its canonical line
              bundle},
   JOURNAL = {Amer. J. Math.},
  FJOURNAL = {American Journal of Mathematics},
    VOLUME = {145},
      YEAR = {2023},
    NUMBER = {6},
     PAGES = {1777--1886},
      ISSN = {0002-9327,1080-6377},
   MRCLASS = {14D23 (14H10 14M30)},
  MRNUMBER = {4684291},
MRREVIEWER = {Anargyros\ Fellouris},
       DOI = {10.1353/ajm.2023.a913296},
       URL = {https://doi.org/10.1353/ajm.2023.a913296},
}

@article {Wit19,
    AUTHOR = {Witten, Edward},
     TITLE = {Notes on super {R}iemann surfaces and their moduli},
   JOURNAL = {Pure Appl. Math. Q.},
  FJOURNAL = {Pure and Applied Mathematics Quarterly},
    VOLUME = {15},
      YEAR = {2019},
    NUMBER = {1},
     PAGES = {57--211},
      ISSN = {1558-8599,1558-8602},
   MRCLASS = {32C11 (14M30 58A50 81T30)},
  MRNUMBER = {3946083},
MRREVIEWER = {Kowshik\ Bettadapura},
       DOI = {10.4310/PAMQ.2019.v15.n1.a2},
       URL = {https://doi.org/10.4310/PAMQ.2019.v15.n1.a2},
}

@article {Ryd11a,
    AUTHOR = {Rydh, David},
     TITLE = {Representability of {H}ilbert schemes and {H}ilbert stacks of
              points},
   JOURNAL = {Comm. Algebra},
  FJOURNAL = {Communications in Algebra},
    VOLUME = {39},
      YEAR = {2011},
    NUMBER = {7},
     PAGES = {2632--2646},
      ISSN = {0092-7872,1532-4125},
   MRCLASS = {14C05 (14A20 14D23)},
  MRNUMBER = {2821738},
MRREVIEWER = {Li\ Li},
       DOI = {10.1080/00927872.2010.488678},
       URL = {https://doi.org/10.1080/00927872.2010.488678},
}

@article {Jan20,
    AUTHOR = {Jang, Mi Young},
     TITLE = {Families of 0-dimensional subspaces on supercurves of
              dimension {$1\mid1$}},
   JOURNAL = {J. Pure Appl. Algebra},
  FJOURNAL = {Journal of Pure and Applied Algebra},
    VOLUME = {224},
      YEAR = {2020},
    NUMBER = {6},
     PAGES = {106251, 16},
      ISSN = {0022-4049,1873-1376},
   MRCLASS = {14M30 (32C11)},
  MRNUMBER = {4048511},
MRREVIEWER = {Frans\ Cantrijn},
       DOI = {10.1016/j.jpaa.2019.106251},
       URL = {https://doi.org/10.1016/j.jpaa.2019.106251},
}

@article {Nor26,
    AUTHOR = {Norbury, Paul},
     TITLE = {Enumerative geometry via the moduli space of super {R}iemann
              surfaces},
   JOURNAL = {J. Geom. Phys.},
  FJOURNAL = {Journal of Geometry and Physics},
    VOLUME = {222},
      YEAR = {2026},
     PAGES = {Paper No. 105750, 57},
      ISSN = {0393-0440,1879-1662},
   MRCLASS = {32G15 (14H81 14M35 32C11 58A50)},
  MRNUMBER = {5018360},
       DOI = {10.1016/j.geomphys.2025.105750},
       URL = {https://doi.org/10.1016/j.geomphys.2025.105750},
}

@ARTICLE{Dan26,
       author = {{Dang}, Marcel},
        title = "{Derived Methods in Supergeometry: Fundamental Classes and Cohomology}",
      journal = {arXiv e-prints},
         year = 2026,
        month = jun,
          eid = {arXiv:2606.09897},
        pages = {arXiv:2606.09897},
          doi = {10.48550/arXiv.2606.09897},
archivePrefix = {arXiv},
       eprint = {2606.09897},
 primaryClass = {math.AG},
       adsurl = {https://ui.adsabs.harvard.edu/abs/2026arXiv260609897D}
}

@unpublished{OPP26,
  author    = {Ott, Nadia and Polishchuk, Alexander and Peng, Fei},
  title     = {},
  note      = {In preparation},
  year      = {2026}
}

\end{document}